\DeclareMathOperator{\Gal}{Gal}
\newtheorem{def_}{Definition}[section]
\newtheorem{lemma}[def_]{Lemma}
\newtheorem{theorem}[def_]{Theorem}
\newtheorem{corollary}[def_]{Corollary}
\newtheorem{proposition}[def_]{Proposition}
\newtheorem*{remark}{Remark}
\newtheorem{def_thm}[def_]{Theorem and Definition}
\newcommand{\zz}{\mathbb{Z}}
\newcommand{\Q}{\mathbb{Q}}
\newcommand{\ra}{\rightarrow}
\newcommand{\Pic}{\mbox{Pic}}
\newcommand{\Cl}{\mbox{Cl}}
\newcommand{\mmod}{\mbox{mod}~}
\newcommand{\ok}{\mathfrak{o}_K}
\newcommand{\oo}{\mathfrak{o}}
\newcommand{\so}{\mathscr{O}}
\newcommand{\ord}{\mathfrak{o}}
\newcommand{\aaa}{\mathfrak{a}}
\newcommand{\p}{\mathfrak{p}}
\newcommand{\hp}{\mathfrak{P}}
\newcommand{\pp}{\mathfrak{P}}
\newcommand{\A}{\mathfrak{A}}
\newcommand{\B}{\mathfrak{B}}
\newcommand{\f}{\mathfrak{f}}
\newcommand{\m}{\mathfrak{m}}
\begin{document}

\title{On Ring Class Fields of Number Rings}
\author[H. Yi]{Hairong Yi}

\address{State Key Laboratory of Information Security\\
Institute of Information Engineering\\
Chinese Academy of Sciences, Beijing, 100093, China.\newline
\indent School of Cyber Security\\
University of Chinese Academy of Sciences\\
Beijing, 100049, China}
\email{yihairong@iie.ac.cn}
\thanks{The work was supported by National Natural Science Foundation of China (Grant No. 11701552)}
\author[C. Lv]{Chang Lv}
\address{State Key Laboratory of Information Security\\
Institute of Information Engineering\\
Chinese Academy of Sciences, Beijing, 100093, China}
\email{lvchang@iie.ac.cn}
\subjclass[2010]{Primary 11R37, 11Y40}
\keywords{Number rings, Picard groups, ring class fields, norm form equations}
\date{April 28, 2018}

\begin{abstract}
For a number field $K$, we extend the notion of the ring class field of an order in $K$ [C. Lv and Y. Deng, SciChina. Math., 2015] to that of an arbitrary number ring in $K$. We give both ideal-theoretic and idele-theoretic description of this number ring class field, and characterize it as a subfield of the ring class field of some order.  As an application, we use it to give a criterion of the solvability of a higher degree norm form equation over a number ring and finally describe algorithms to compute this field.



\end{abstract}
\maketitle

\section{Introduction}
Ring class fields are  closely related to the problem of the existence of integral solutions of norm form equations, see for example \cite{cox,  integralsquare,dioph.equa.,multi_norm}. The original and typical application can be found in \cite{cox} written by David A. Cox, which introduced how to use the ring class field to give a criterion of the integral solvability of the quadratic form equation $p=x^2+ny^2$ over $\zz$.
In \cite{multi_norm} Wei and Xu proposed a rather different idea to give an idelic form criterion of the integral solvability of the quadratic equation $a=x^2\pm dy^2$ over $\zz$ for an arbitrary integer $a$.
The ring class fields considered in \cite{cox,multi_norm} were all restricted to the case of orders of imaginary quadratic fields \cite{invitation,construct}, one described in ideal-theoretic form and the other in idele-theoretic form. Then C. Lv and Y. Deng \cite{Chang Lv} generalized this notion to an order of an arbitrary number field. They gave an explicit ideal-theoretic description of this ring class field and applied it to the solvability of the diophantine equation $p=x^2+ny^2$ over $\oo_F$ where $F$ is an imaginary quadratic field.

In this article, we consider to extend this notion more generally to an arbitrary number ring $R$ of an arbitrary number field $K$. We start from the explicit ideal-theoretic description of the ring class field of $R$, including the determination of its modulus and congruence subgroup, and then find and verify the idele-theoretic correspondence of it, of which the form has already been known in the case of an order, but its equivalence to the ideal-theoretic version has not been verified in literature yet. By the way we give a characterization of the ring class field of a number ring in terms of the ring class field of an order. As an application, we extend the main results in \cite{cox} and \cite{multi_norm} to the relative extension case for a higher degree norm form equation over a number ring. Finally, we introduce how to compute this ring class field using Pari/GP \cite{pari}.

The article is arranged as follows. In Section 2 we present the explicit ideal-theoretic description of the ring class field of a number ring. In Section 3 we give a characterization of such a kind of class fields. In Section 4 we give and verify the idele-theoretic correspondence of number ring class fields. Section 5 discusses further the structure of number rings and proposes an application and Section 6 describes the algorithms for computing number ring class fields using Pair/GP.


\section{The Ring Class Field Defined By a Number Ring}\label{theory}
  Given a number field $K$, a number ring $R$ in $K$ is an integral domain for which the field of fractions is equal to $K$ \cite{arith_nr}. For example in $\Q$ there are infinitely many number rings, such as $\zz$, $\zz[\frac{1}{2},\frac{1}{3}]$ and $\Q$ itself. $R$ has its integral closure in $K$, which is denoted by $\so$. The conductor of $R$ is defined to be $\f:=\set{x\in K|x\so\subseteq R}$, as in the case of orders.

In this section, first we recall some basic facts about the Picard group of $R$ and the generalized ideal class group. Then we focus on a subgroup of the Picard group of $R$ whose elements are prime to $\f$, and found it naturally determining a congruence subgroup. By showing that this subgroup is indeed the whole Picard group of $R$, and through the class field theory, the Picard group of $R$ corresponds to a class field of $K$, which is exactly the ring class field of $R$.

\subsection{Basic Notation and Results}\label{basic-notation-and-results}
Denote by $\ok$ the ring of integers of $K$. Let $\m$ be a modulus of $K$, in other words $\m=\m_0\m_\infty$ is a formal product of an integral ideal of $K$ and a subset of the set of real places of $K$. So an integral $\ok$-ideal can be viewed as a modulus of which the infinity part is 1 by default. Let $J^\m_K$ be the group of all fractional $\ok$-ideals relatively prime to $\m_0$ and let $P^\m_{K,1}$ be the subgroup of $J^\m_K$ generated by the principal ideals $\alpha\ok$, where $\alpha\in \ok$ satisfies
$$\alpha\equiv 1~\mmod \m_0\quad \mbox{and}\quad \sigma(\alpha)>0~~\mbox{for every real place $\sigma$ dividing $\m_\infty$}. $$

\begin{def_}\cite{janusz}
The quotient $\Cl^\m_K:=J^\m_K/P^\m_{K,1}$ is called the ray class group for $\m$. A basic fact is that it is a finite group. A subgroup $H^\m \subseteq J^\m_K$ is called a congruence subgroup for $\m$ if it satisfies $P^\m_{K,1}\subseteq H^\m \subseteq J^\m_K$, and the quotient $J^\m_K/ H^\m$ is called a generalized ideal class group for $\m$. Obviously it is also a finite group.
\end{def_}

For a number ring $R$ of $K$, we consider the group of all invertible ideals of $R$, denoted by $J(R)$. Those principal ones, i.e. $\alpha R$ for some $\alpha\in K^*$\footnote{For a ring $A$, $A^*$ denotes the subset of multiplicative invertible elements of $A$.}, form a subgroup of $J(R)$, denoted by $P(R)$. The quotient $\Pic(R):= J(R)/P(R)$ is called the \emph{Picard group} of $R$. In the case $R=\ok$, $\Pic(R)$ is exactly the class group $\Cl_K$ of $K$.

For the properties of number rings, we need the following results. See detailed proofs in \cite{arith_nr}.

\begin{proposition}\label{basic prop. of number ring}\label{basic prop. of n.r 2}
Let $R,K,\so,\f$ be as above. We have
\begin{itemize}
  \item[(1)] For any prime ideal $\hp$ of $R$,
  $$\hp~\mbox{is invertible} \Leftrightarrow R_\hp~\mbox{is a discrete valuation ring} \Leftrightarrow \hp\nmid \f~(\text{i.e.}~\pp\nsupseteq\f).$$
  \item[(2)] Let $\mathfrak{A}$ be an integral $R$-ideal prime to the conductor $\f$, i.e. $\mathfrak{A}+\f=R$. Then $\mathfrak{A}$ is invertible and factors uniquely into a product of invertible prime ideals of $R$.
  \item[(3)] $\so$ is a Dedekind domain, and there exists a set $T$ of prime ideals of $\ok$ such that
  $$\so=R\ok=\oo_{K,T}=\set{x\in K|\mathrm{ord}_\p(x)\geq 0~\mbox{for all}~\p\notin T}.$$
  \item[(4)] There is an injective map $\set{\text{prime ideals of }\so}\hookrightarrow \set{\mbox{prime ideals of }\ok}$ by the natural contraction of ideals, and the missing part is exactly the set of the prime ideals in $T$. Moreover, we have $\p\so=\so$ for any $\p\in T$.
\end{itemize}
\end{proposition}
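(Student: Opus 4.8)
The plan is to prove (3) first — identifying $\so$ concretely — and then reduce (1), (2) and (4) to local computations together with standard facts about Dedekind domains. For (3): since $R$ has characteristic $0$ it contains $\zz$, so its integral closure $\so$ contains the integral closure $\ok$ of $\zz$ in $K$, and hence $R\ok\subseteq\so$. Now $R\ok$ is a ring between the Dedekind domain $\ok$ and $K=\mathrm{Frac}(R\ok)$, and by the classical structure theory of overrings of a Dedekind domain every such ring equals $\set{x\in K|\mathrm{ord}_\p(x)\geq 0~\mbox{for all}~\p\notin T}=\oo_{K,T}$ for the set $T$ of primes $\p$ with $\p\cdot(R\ok)=R\ok$; in particular $R\ok$ is itself a Dedekind domain, hence integrally closed. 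Since $\so$ is integral over $R\subseteq R\ok$ and $R\ok$ is integrally closed with fraction field $K$, we conclude $\so\subseteq R\ok$, so $\so=R\ok=\oo_{K,T}$. Two consequences I will use repeatedly: $\so$ is a \emph{finitely generated} $R$-module (it is spanned over $R$ by a $\zz$-basis of $\ok$), so $\f=\mathrm{Ann}_R(\so/R)\neq 0$, and both the formation of integral closure and that of $\f$ commute with localization, giving $\so_\hp=$ integral closure of $R_\hp$ in $K$ and $\f_\hp=$ conductor of $R_\hp$ in $\so_\hp$; and $R$ itself is Noetherian of dimension $\leq 1$ by the Krull--Akizuki theorem, so each $R_\hp$ at a nonzero prime is a one-dimensional Noetherian local domain.

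For (1): invertibility of an ideal is a local property, and in a local domain the invertible ideals are precisely the nonzero principal ones, so $\hp$ is invertible iff $\hp R_\hp$ is principal, which — $R_\hp$ being Noetherian local of dimension $1$ — holds iff $R_\hp$ is a discrete valuation ring (DVR). It remains to match this with $\hp\nsupseteq\f$. If $\hp\nsupseteq\f$, then $\f_\hp=R_\hp$, and localizing $\f\so\subseteq R$ gives $\so_\hp=\f_\hp\so_\hp\subseteq R_\hp$, so $R_\hp=\so_\hp$; being a localization of the Dedekind domain $\so$, this is integrally closed, hence (Noetherian local of dimension $1$) a DVR. Conversely, a DVR is integrally closed, which forces $R_\hp=\so_\hp$, so the conductor $\f_\hp$ of $R_\hp$ in $\so_\hp$ equals $R_\hp$ and therefore $\f\nsubseteq\hp$. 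For (2): if $\A+\f=R$ and $\mathfrak q$ is any prime containing $\A$, then $\mathfrak q\nsupseteq\f$ (else $\mathfrak q\supseteq\A+\f=R$), so $R_{\mathfrak q}$ is a DVR by (1) and $\A R_{\mathfrak q}$ is a power of its maximal ideal, in particular principal; at primes not containing $\A$ the localization is the whole ring. Hence $\A$ is locally principal, and being finitely generated it is invertible. For the unique factorization I would either argue directly — $R/\A$ is Artinian, so finitely many primes $\mathfrak q_1,\dots,\mathfrak q_r$ contain $\A$, one reads off exponents $v_i$ from $\A R_{\mathfrak q_i}$, and $\A=\prod_i\mathfrak q_i^{v_i}$ and its uniqueness are verified after localizing — or, more cleanly, establish the classical multiplicative, prime-preserving bijection $\A\mapsto\A\so$ between integral $R$-ideals prime to $\f$ and integral $\so$-ideals prime to $\f$ (with inverse $\B\mapsto\B\cap R$) and transport unique factorization from the Dedekind domain $\so$.

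Part (4) is then the standard behaviour of primes under the localization $\so=\oo_{K,T}$ of $\ok$: contraction $\hp\mapsto\hp\cap\ok$ is a bijection from the primes of $\so$ onto the primes $\p\notin T$ of $\ok$, with extension $\p\mapsto\p\so$ as inverse; hence the contraction map into the set of all primes of $\ok$ is injective, the primes it misses being exactly those of $T$, and $\p\so=\so$ for $\p\in T$ by the very definition of $T$.

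The step demanding the most care — though it is entirely classical — is the structure theorem for overrings of a Dedekind domain used in (3), together with the bookkeeping that keeps $T$ consistent between (3) and (4); everything else is a routine localization argument once that theorem and Krull--Akizuki are in hand.
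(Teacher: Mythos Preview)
Your argument is correct and self-contained. Note, however, that the paper does not supply its own proof of this proposition: it states the result and refers the reader to Stevenhagen's survey \cite{arith_nr} for detailed proofs. There is therefore no in-paper argument to compare against; what you have written is essentially the standard line of reasoning one finds in that reference --- Krull--Akizuki to get $R$ Noetherian of dimension $\le 1$, the overring structure theorem for Dedekind domains to identify $\so=R\ok=\oo_{K,T}$, and local analysis via the conductor for (1) and (2). The only points worth flagging as places to be explicit in a final write-up are the ones you already anticipated: that conductor formation localizes because $\so/R$ is finitely generated, and that ``locally principal $\Rightarrow$ invertible'' uses finite generation (hence Noetherianity of $R$).
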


Using symbols in the above proposition we have
$$T=\set{\mbox{prime ideals $\p$ of }\ok|\exists~x\in R \mbox{ s.t. }\mathrm{ord}_\p(x)< 0}.$$
For our goal, we make the following assumption in this paper:\\
\\
\noindent\textbf{Assumption}: The associated set $T$ of $R$ is finite.

\subsection{The Congruence Subgroup Determined by $\Pic(R,\f)$}\label{Sec-congruence-subgroup}
We follow some of the notation and symbols used in \cite{Chang Lv}.
For an ideal $\mathfrak{I}$ of $R$ (or $\ok$, $\so$), consider the set of prime ideals that are ``prime to $\mathfrak{I}$'': set $S(R,\mathfrak{I}):=\set{\mbox{prime ideals $\mathfrak{P}$ of }R| \mathfrak{P}+\mathfrak{I}=R}$. It follows from Proposition \ref{basic prop. of n.r 2} that $\so$ is the ring of $T$-integers for some finite set $T$ of finite places of $\ok$. Let
$$\m_0:=\prod_{\p\in T}\p,\quad \tilde\f=\f\cap\ok.$$
Note that $\f$ is an ideal of both $\so$ and $R$, and $\tilde\f$ is prime to $\m_0$ by Proposition \ref{basic prop. of number ring} (4).

\begin{lemma}
There is a bijection between $S(R,\f)$ and $S(\so,\f)$ by the extension and contraction of ideals. Also there is a bijection between $S(\ok,\tilde{\f}\m_0)$ and $S(\so,\f)$ in the same way.
\end{lemma}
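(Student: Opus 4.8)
The plan is to establish the two bijections separately, using the structural results in Proposition \ref{basic prop. of number ring} to move prime ideals back and forth between $R$, $\so$ and $\ok$.

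\medskip

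\noindent\textbf{The bijection between $S(R,\f)$ and $S(\so,\f)$.} First I would recall that by Proposition \ref{basic prop. of number ring}(1) a prime ideal $\hp$ of $R$ lies in $S(R,\f)$ (i.e.\ $\hp+\f=R$, equivalently $\hp\nmid\f$) precisely when $R_\hp$ is a discrete valuation ring. I would then argue that for such $\hp$, since $\f$ is the conductor, the localizations satisfy $R_\hp=\so_{\hp}$ — more precisely, away from the support of $\f$ the rings $R$ and $\so$ agree locally — so $\hp\so$ is a prime ideal of $\so$ still prime to $\f$, and conversely the contraction of a prime of $\so$ prime to $\f$ is a prime of $R$ prime to $\f$. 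The two operations $\hp\mapsto\hp\so$ and $\mathfrak{Q}\mapsto\mathfrak{Q}\cap R$ are then mutually inverse on these sets; checking $\hp\so\cap R=\hp$ and $(\mathfrak{Q}\cap R)\so=\mathfrak{Q}$ uses invertibility (Proposition \ref{basic prop. of number ring}(2)) together with the coincidence of the local rings, since equality of ideals can be checked after localizing at each prime.

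\medskip

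\noindent\textbf{The bijection between $S(\ok,\tilde\f\m_0)$ and $S(\so,\f)$.} Here I would use Proposition \ref{basic prop. of number ring}(3)--(4): $\so=\oo_{K,T}$ is the ring of $T$-integers, the contraction map identifies the primes of $\so$ with the primes of $\ok$ \emph{outside} $T$, and the inverse is extension $\p\mapsto\p\so$. A prime $\p\notin T$ corresponds to $\p\so\in S(\so,\f)$ iff $\p\so+\f=\so$; since $\f=\tilde\f\so$ (as $\tilde\f=\f\cap\ok$ and $\f$ is already a $\so$-ideal with $\so/\f$ supported away from $T$), and since for $\p\notin T$ the condition $\p\nmid\m_0=\prod_{\p'\in T}\p'$ is automatic, one gets $\p\so+\f=\so \iff \p+\tilde\f=\ok \text{ and } \p+\m_0=\ok$, i.e.\ $\p\in S(\ok,\tilde\f\m_0)$. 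Conversely any $\p\in S(\ok,\tilde\f\m_0)$ is in particular prime to $\m_0$, hence $\p\notin T$, so it does correspond to a prime of $\so$, and that prime is prime to $\f$ by the same computation. Thus contraction/extension restricts to the desired bijection.

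\medskip

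\noindent I expect the main obstacle to be the careful verification that $R$ and $\so$ have the same localizations at primes not dividing $\f$ (so that $\hp\so\cap R=\hp$ and the assignment is well-defined in both directions), and the bookkeeping that $\f$, viewed inside $\so$, is generated by $\tilde\f=\f\cap\ok$ so that the "prime to $\f$" conditions over $\so$ and over $\ok$ match up after also accounting for the places in $T$ via the factor $\m_0$. Once these local identifications are in place, the bijectivity is a formal consequence of the fact that extension and contraction are inverse to each other on invertible (equivalently, locally principal) ideals, which is exactly what Proposition \ref{basic prop. of number ring}(1)--(2) supplies.
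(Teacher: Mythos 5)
Your argument is correct and essentially matches the paper's: the second bijection is obtained in both cases directly from Proposition \ref{basic prop. of number ring}(3)--(4) (primes of $\so$ are the primes of $\ok$ outside $T$, and coprimality to $\m_0$ is automatic for such primes), and the first bijection reduces in both cases to showing that extension and contraction of primes prime to $\f$ are mutually inverse between $R$ and $\so$. The only (minor) difference is how the key fact is verified --- namely that $\pp\so$ is a prime of $\so$ prime to $\f$ with $\pp\so\cap R=\pp$: the paper writes $1=\alpha+f$ with $\alpha\in\pp$, $f\in\f$ and deduces $R/\pp\cong\so/\pp\so$ from $x\equiv xf \ (\mathrm{mod}\ \pp\so)$ with $xf\in\f\so\subseteq R$, whereas you deduce it from the equality of localizations $R_\pp=\so_\pp$ away from the support of $\f$; both hinge on the same conductor element $f\in\f\setminus\pp$ pushing $\so$ into $R$, so the two verifications are interchangeable.
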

\begin{proof}
 The first bijection is analogous to the case of orders together with the ring of integers, as in \cite[Lemma 3.3]{Chang Lv}, where we only need the fact that if $\pp$ is a prime ideal of $R$ and prime to $\f$, then $\pp\so$ is a prime ideal of $\so$ and prime to $\f$. Since $\pp+\f=R$, there exists $\alpha\in \pp$ and $f\in\f$ such that $\alpha+f=1$. Then for any $x\in\so$, $x=x\alpha+xf\equiv xf ~\mmod \pp\so$ where $xf\in \f\so\subseteq R$. This shows $R/\pp=\so/\pp\so$, which proves the fact. For the second bijection, it is a direct result of Proposition \ref{basic prop. of n.r 2} (4).
\end{proof}

\begin{remark}
We can compose these two bijections to obtain a bijection between $S(R,\f)$ and $S(\ok,\tilde{\f}\m_0)$: $\pp\mapsto \pp\so \cap \ok$ for any $\pp$ belonging to the left side, and $\p\mapsto\p\so\cap R$ for any $\p$ belonging to the right side.
\end{remark}

Let $J(R,\f)$ (resp. $J(\so,\f)$) be the group generated by $S(R,\f)$ (resp. $S(\so,\f)$). By Proposition \ref{basic prop. of number ring}, it is contained in $J(R)$ (resp. $J(\so)$) and a free abelian group over $S(R,\f)$ (resp. $S(\so,\f)$). These results hold for $J^{\tilde{\f}\m_0}_K$ as well. Therefore we obtain the following

\begin{corollary}\label{J(R,f)}
 There exist isomorphisms $J(R,\f)\cong J(\so,\f) \cong J_K ^{\tilde{\f}\m_0}$. The first isomorphism is given by $\mathfrak{I}\mapsto\mathfrak{I}\so$ for $\mathfrak{I}\in J(R,\f)$ and $\mathfrak{A}'/\mathfrak{B}'\mapsto (\mathfrak{A}'\cap R)/(\mathfrak{B}'\cap R)$ for $\mathfrak{A}', \mathfrak{B}'\subseteq \so$ prime to $\f$. The second isomorphism is given in the same way.
\end{corollary}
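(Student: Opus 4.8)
The plan is to reduce both isomorphisms to the preceding Lemma together with the observation, recorded just before the statement, that $J(R,\f)$, $J(\so,\f)$ and $J_K^{\tilde\f\m_0}$ are free abelian groups on the sets $S(R,\f)$, $S(\so,\f)$ and $S(\ok,\tilde\f\m_0)$ respectively. A bijection between the free generating sets of two free abelian groups extends uniquely to a group isomorphism, so the Lemma already yields an isomorphism $\Phi\colon J(R,\f)\to J(\so,\f)$ determined on generators by $\pp\mapsto\pp\so$, and similarly an isomorphism $J(\so,\f)\to J_K^{\tilde\f\m_0}$ coming from the second bijection. The entire content of the Corollary is therefore to check that these isomorphisms, and their inverses, are computed on all of the groups (not just on prime generators) by the extension and the contraction of ideals.

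The extension direction is free: for any ring extension $R\subseteq\so$, the assignment $\mathfrak{I}\mapsto\mathfrak{I}\so$ satisfies $(\mathfrak{I}\mathfrak{J})\so=(\mathfrak{I}\so)(\mathfrak{J}\so)$ and respects inverses, hence is a group homomorphism $J(R,\f)\to J(\so)$; it sends each prime $\pp\in S(R,\f)$ to $\pp\so\in S(\so,\f)$ by the Lemma, so by the universal property of the free abelian group $J(R,\f)$ it coincides with $\Phi$, and in particular $\Phi$ takes values in $J(\so,\f)$. The contraction direction requires a genuine argument. Writing a general element of $J(\so,\f)$ as $\mathfrak{A}'/\mathfrak{B}'$ with $\mathfrak{A}',\mathfrak{B}'\subseteq\so$ integral and prime to $\f$, and applying $\Phi^{-1}$, we get $\mathfrak{A}'=\mathfrak{I}\so$ and $\mathfrak{B}'=\mathfrak{J}\so$ for integral invertible $R$-ideals $\mathfrak{I},\mathfrak{J}$ prime to $\f$. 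So it suffices to prove the single identity
\[ (\mathfrak{I}\so)\cap R=\mathfrak{I}\qquad\text{for every integral invertible }R\text{-ideal }\mathfrak{I}\text{ prime to }\f. \]
Granting it, $\Phi^{-1}(\mathfrak{A}')=\mathfrak{A}'\cap R$ on integral ideals prime to $\f$, hence $\Phi^{-1}(\mathfrak{A}'/\mathfrak{B}')=(\mathfrak{A}'\cap R)/(\mathfrak{B}'\cap R)$; the resulting multiplicativity of contraction on ideals prime to $\f$ also shows that this expression is independent of the chosen representative $\mathfrak{A}'/\mathfrak{B}'$.

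I would prove the identity $(\mathfrak{I}\so)\cap R=\mathfrak{I}$ by localization: two ideals of the domain $R$ coincide iff their localizations at every prime $\pp$ of $R$ coincide, and localization commutes with the formation of $\mathfrak{I}\so$ and of $\mathfrak{I}\so\cap R$. If $\pp\nmid\f$, then $R_\pp$ is a discrete valuation ring by Proposition \ref{basic prop. of number ring}(1), so it is integrally closed in $K$; since $\so$ is integral over $R$, the localization $(R\setminus\pp)^{-1}\so$ is integral over $R_\pp$ inside $K$ and hence equals $R_\pp$, so $(\mathfrak{I}\so)_\pp\cap R_\pp=\mathfrak{I}_\pp$. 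If $\pp\supseteq\f$ (so $\pp$ is not invertible), use $\mathfrak{I}+\f=R$ to pick $a\in\mathfrak{I}$ and $f\in\f$ with $a+f=1$; then $a$ lies in both $\mathfrak{I}$ and $\mathfrak{I}\so\cap R$, and $a\equiv 1\pmod{\pp}$, so $a$ is a unit of $R_\pp$ and both $\mathfrak{I}_\pp$ and $(\mathfrak{I}\so\cap R)_\pp$ are equal to $R_\pp$. This establishes the identity, and with it the first isomorphism together with both of its ideal-theoretic descriptions.

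The second isomorphism $J(\so,\f)\cong J_K^{\tilde\f\m_0}$ is more direct. By Proposition \ref{basic prop. of number ring}(3) the ring $\so$ is a Dedekind domain, and by Proposition \ref{basic prop. of number ring}(4) together with the finiteness of $T$ it is a localization of $\ok$ in which precisely the primes of $T$ are inverted, the remaining primes of $\so$ being the $\p\so$ with $\p\notin T$. Hence, by elementary localization theory, extension $\mathfrak{a}\mapsto\mathfrak{a}\so$ and contraction $\mathfrak{A}'\mapsto\mathfrak{A}'\cap\ok$ are mutually inverse isomorphisms between $J_K^{\tilde\f\m_0}$ and $J(\so,\f)$ (one has $(\mathfrak{a}\so)\cap\ok=\mathfrak{a}$ whenever $\mathfrak{a}$ is prime to $\m_0$), and composing with $\Phi$ yields the stated second isomorphism and its description. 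The only real obstacle in the whole proof is the local identity $(\mathfrak{I}\so)\cap R=\mathfrak{I}$ over the possibly non-maximal ring $R$; given Proposition \ref{basic prop. of number ring} and the Lemma it is short, but it is the one step beyond routine bookkeeping.
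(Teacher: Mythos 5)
Your proof is correct and follows essentially the same route as the paper: the paper deduces the corollary directly from the fact that $J(R,\f)$, $J(\so,\f)$ and $J_K^{\tilde{\f}\m_0}$ are free abelian groups on their sets of prime generators together with the bijections of the preceding Lemma, exactly as you do. The additional verification you supply --- that the resulting isomorphisms are computed on arbitrary elements by extension and contraction, in particular the localization proof of $(\mathfrak{I}\so)\cap R=\mathfrak{I}$ for integral invertible $\mathfrak{I}$ prime to $\f$ --- is left implicit in the paper, and your argument for it is sound.
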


We consider the subgroup $P(R,\f)$ of $J(R,\f)$, which is generated by the set of restricted principle $R$-ideals $\check{P}(R,\f):=\set{\alpha R|\alpha\in R,\alpha R+\f=R}\subseteq P(R)$. Here we can define the important intermediate group $\Pic(R,\f):=J(R,\f)/P(R,\f)$. We want to find the correspondence of the subgroup $P(R,\f)$ under the isomorphism given in Corollary \ref{J(R,f)}.

  Let $\check{P}_{K,R}^{\tilde{\f}\m_0}:=\set{\alpha\so\cap \ok|\alpha\in R, \alpha R+\f=R}$ and $\check{P}^{\tilde{\f}\m_0}_{K,1}:=\set{\alpha\ok|\alpha\in\ok, \alpha\equiv 1~(\mmod \tilde{\f}\m_0)}$. Let $P^{\tilde{\f}\m_0}_{K,R}$ be the subgroup of $J_K^{\tilde{\f}\m_0}$ generated by $\check{P}_{K,R}^{\tilde{\f}\m_0}$. Then it is obvious that $P^{\tilde{\f}\m_0}_{K,1}$ is generated by $\check{P}^{\tilde{\f}\m_0}_{K,1}$. It maybe false that $\alpha \so\cap \ok = \alpha\ok$ for $\alpha\in \so$, unless $\alpha\in\ok$ and $\alpha\ok$ is prime to $\m_0$.

\begin{theorem}\label{thm_congruencegp}
With the notation as above, we have
\begin{itemize}
  \item[(1)] The group $P^{\tilde{\f}\m_0}_{K,R}$ is a congruence subgroup for the modulus $\tilde{\f}\m_0$.
  \item[(2)] There is an isomorphism from $\Pic(R,\f)$ to the generalized ideal class group $J_K^{\tilde{\f}\m_0}/P_{K,R}^{\tilde{\f}\m_0}$.
\end{itemize}
\end{theorem}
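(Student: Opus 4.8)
The plan is to bootstrap everything off the isomorphism $\phi\colon J(R,\f)\xrightarrow{\sim}J_K^{\tilde\f\m_0}$ supplied by Corollary~\ref{J(R,f)}; composing the two bijections there, $\phi$ sends a prime $\pp$ to $\pp\so\cap\ok$, and hence in general $\phi(\mathfrak I)=\mathfrak I\so\cap\ok$, where I use $R\so=\so$ (Proposition~\ref{basic prop. of number ring}(3)). Granting this, part~(2) will follow formally once I check that $\phi$ identifies $P(R,\f)$ with $P^{\tilde\f\m_0}_{K,R}$, and part~(1) will reduce to two inclusions.

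First I would dispatch the inclusion $P^{\tilde\f\m_0}_{K,R}\subseteq J_K^{\tilde\f\m_0}$, which is in any case needed for the statement of (1) to make sense: every generator $\alpha\so\cap\ok$ of $P^{\tilde\f\m_0}_{K,R}$ (with $\alpha\in R$, $\alpha R+\f=R$) is $\phi(\alpha R)$, and $\alpha R\in J(R,\f)$ since it is an integral $R$-ideal prime to $\f$ (Proposition~\ref{basic prop. of number ring}(2)), so its $\phi$-image lies in $J_K^{\tilde\f\m_0}$. Next I would prove $P^{\tilde\f\m_0}_{K,1}\subseteq P^{\tilde\f\m_0}_{K,R}$: take a generator $\alpha\ok$ with $\alpha\in\ok$ and $\alpha\equiv1\ (\mmod\tilde\f\m_0)$ --- there is no positivity condition to carry along, as $\tilde\f\m_0$ has trivial infinite part. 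From $\alpha-1\in\tilde\f\subseteq\f\subseteq R$ I get $\alpha\in R$ with $\alpha R+\f=R$, and from $\alpha-1\in\m_0$ I get that $\alpha\ok$ is prime to $\m_0$, so by the observation recorded just before Theorem~\ref{thm_congruencegp} one has $\alpha\so\cap\ok=\alpha\ok$. Therefore $\alpha\ok=\alpha\so\cap\ok\in\check P^{\tilde\f\m_0}_{K,R}\subseteq P^{\tilde\f\m_0}_{K,R}$, completing (1).

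For part~(2) I would use that a group isomorphism maps the subgroup generated by a set onto the subgroup generated by its image. Since $\phi(\alpha R)=\alpha\so\cap\ok$, the map $\phi$ restricts to a bijection of the generating set $\check P(R,\f)=\{\alpha R:\alpha\in R,\ \alpha R+\f=R\}$ of $P(R,\f)$ onto the generating set $\check P^{\tilde\f\m_0}_{K,R}$ of $P^{\tilde\f\m_0}_{K,R}$; hence $\phi(P(R,\f))=P^{\tilde\f\m_0}_{K,R}$. Passing to quotients, $\phi$ descends to an isomorphism $\Pic(R,\f)=J(R,\f)/P(R,\f)\xrightarrow{\sim}J_K^{\tilde\f\m_0}/P^{\tilde\f\m_0}_{K,R}$, the target being a generalized ideal class group for $\tilde\f\m_0$ by part~(1).

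I expect the one genuinely delicate point to be the identity $\alpha\so\cap\ok=\alpha\ok$, which is false in general: it is exactly the $\m_0$-part of the congruence defining $P^{\tilde\f\m_0}_{K,1}$ that forces $\alpha\ok$ to be coprime to $\m_0$ and so salvages the identity, while the $\tilde\f$-part is what puts $\alpha$ into $R$ and keeps it coprime to $\f$. In the same vein I should make sure $\tilde\f$ and $\m_0$ are coprime --- so that $\tilde\f\m_0$ is a bona fide modulus and the two halves of the congruence may be invoked independently --- which is precisely Proposition~\ref{basic prop. of number ring}(4).
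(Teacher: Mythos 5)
Your proposal is correct and follows essentially the same route as the paper: both parts rest on the isomorphism $J(R,\f)\cong J_K^{\tilde\f\m_0}$ of Corollary~\ref{J(R,f)}, with (1) reduced to the generator-level inclusion $\check P^{\tilde\f\m_0}_{K,1}\subseteq\check P^{\tilde\f\m_0}_{K,R}$ via $\alpha-1\in\tilde\f\m_0$ and the identity $\alpha\so\cap\ok=\alpha\ok$ for $\alpha$ prime to $\m_0$, and (2) obtained by matching generating sets and passing to quotients. Your additional checks (that $\check P^{\tilde\f\m_0}_{K,R}\subseteq J_K^{\tilde\f\m_0}$, and that the modulus has trivial infinite part) are points the paper leaves implicit, but the argument is the same.
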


\begin{proof}
For proving (1) we only need to show $\check{P}_{K,1}^{\tilde{\f}\m_0} \subseteq \check{P}_{K,R}^{\tilde{\f}\m_0}$. Take an element $\alpha \in \ok$ with $\alpha \equiv 1~(\mmod \tilde{\f}\m_0)$. Since $\tilde{\f}\m_0\subseteq \tilde{\f}=\f\cap \ok \subseteq \f\subseteq R$, it follows that $\alpha \in R$. And since $\alpha-1\in \tilde\f$, then $\alpha R+\f=R$. On the other hand, by Corollary \ref{J(R,f)} the equality $\alpha \so\cap \ok=\alpha \ok$ holds for any element $\alpha$ which is prime to $\m_0$.

For (2), in fact the generating set $\check{P}_{K,R}^{\tilde{\f}\m_0}$ has been carefully chosen to be the image of the subset $\check{P}(R,\f)$ under the isomorphism $J(R,\f)\ra J_K ^{\tilde{\f}\m_0}$ in Corollary \ref{J(R,f)}. Therefore the subgroups generated by them are also isomorphic. Then the isomorphism from $\Pic(R,\f)$ to $J_K^{\tilde{\f}\m_0}/P_{K,R}^{\tilde{\f}\m_0}$ is clear.
\end{proof}

\subsection{Isomorphism between $\Pic(R,\f)$ and $\Pic(R)$}
The last step to connect the Picard group of $R$ with a class field of $K$, is to show the isomorphism between groups $\Pic(R,\f)$ and $\Pic(R)$.
Actually, we can show that for any integral $R$-ideal $\mathfrak{g}$ satisfying $\f\mid\mathfrak{g}$, the natural map from $\Pic(R,\mathfrak{g})$ to $\Pic(R)$ is an isomorphism.

\begin{lemma}\label{1}
For an invertible fractional $R$-ideal 
$\A $, if it satisfies $\A R_\pp = R_\pp$ for some prime ideal $\pp$ of $R$, then $\A$ is prime to $\pp$ or equivalently belongs to $ J(R,\pp)$. In other words, $\A=\frac{\aaa}{\mathfrak{b}}$ where $\aaa, \mathfrak{b}$ are integral $R$-ideals that are prime to $\pp$.
\end{lemma}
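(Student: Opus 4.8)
The plan is to exhibit the desired presentation $\A=\aaa/\mathfrak b$ explicitly, with a principal numerator extracted from the hypothesis. First I would translate $\A R_\pp=R_\pp$ into the existence of an element of $\A$ lying outside $\pp$: since $1\in\A R_\pp$, one can write $1=\sum_i a_i r_i/s_i$ with $a_i\in\A$, $r_i\in R$ and $s_i\in R\setminus\pp$; multiplying through by $s:=\prod_i s_i$ (which again lies outside $\pp$ because $\pp$ is prime) gives $s=\sum_i a_i r_i(s/s_i)$, and since each $s/s_i\in R$ and $\A$ is an $R$-module this shows $t:=s\in\A\cap(R\setminus\pp)$. In particular $t\neq0$ and $tR$ is a principal integral $R$-ideal with $tR+\pp=R$.

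Next I would put $\mathfrak b:=t\,\A^{-1}$, using the inverse fractional ideal furnished by the invertibility of $\A$. Because $t\in\A$ gives $tR\subseteq\A$, hence $\A^{-1}\subseteq t^{-1}R$, the ideal $\mathfrak b$ is integral, and $(\A^{-1})^{-1}=\A$ yields $\A=(tR)\mathfrak b^{-1}$. The remaining point is that $\mathfrak b$ is prime to $\pp$. Localizing the identity $\A\A^{-1}=R$ at $\pp$ and using that localization commutes with products of ideals, one gets $(\A R_\pp)(\A^{-1}R_\pp)=R_\pp$, so $\A R_\pp=R_\pp$ forces $\A^{-1}R_\pp=R_\pp$; hence $\mathfrak b R_\pp=t\,\A^{-1}R_\pp=tR_\pp=R_\pp$, the last step because $t\notin\pp$ is a unit in $R_\pp$. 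Thus $\mathfrak b\not\subseteq\pp$, and since $\pp$ is maximal --- every nonzero prime of the number ring $R$ is maximal, as the integral closure $\so$ is a one-dimensional Dedekind domain integral over $R$ by Proposition \ref{basic prop. of number ring} --- this gives $\mathfrak b+\pp=R$. (If $\pp=0$ then $R_\pp=K$ and there is nothing to prove, so we may assume $\pp\neq0$.)

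Combining the two steps, $\A=\aaa/\mathfrak b$ with $\aaa:=tR$ and $\mathfrak b$ both integral $R$-ideals prime to $\pp$, i.e. $\A\in J(R,\pp)$. I do not anticipate a real obstacle: the argument is a denominator-clearing trick together with one localization identity. The only places needing mild care are the passage from $\A R_\pp=R_\pp$ to $\A^{-1}R_\pp=R_\pp$, which relies on the invertibility of $\A$ and on the compatibility of localization with ideal multiplication, and the upgrade of ``$\mathfrak b\not\subseteq\pp$'' to ``$\mathfrak b+\pp=R$'', which is exactly where the maximality of $\pp$ is used.
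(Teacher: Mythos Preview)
Your proof is correct, but it proceeds by a route dual to the paper's. The paper observes that $\A R_\pp=R_\pp$ gives $\A\subseteq R_\pp$, then invokes the finite generation of the invertible ideal $\A$ (citing \cite[Section~2]{arith_nr}) to produce a single $\alpha\in R\setminus\pp$ with $\alpha\A\subseteq R$; it then sets $\aaa=\alpha\A$ and $\mathfrak b=\alpha R$, so the \emph{denominator} is principal. You instead extract from $1\in\A R_\pp$ an element $t\in\A\cap(R\setminus\pp)$ and take $\aaa=tR$, $\mathfrak b=t\A^{-1}$, so the \emph{numerator} is principal and the inverse ideal does the work. Your argument is slightly more self-contained in that it avoids the external citation for finite generation, trading it for the explicit use of $\A^{-1}$ and the localization identity $(\A R_\pp)(\A^{-1}R_\pp)=R_\pp$. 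Both proofs silently rely on the maximality of nonzero primes in $R$ to pass from ``not contained in $\pp$'' to ``comaximal with $\pp$''; you make this step explicit, which is a small expository gain.
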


\begin{proof}
 The condition $\A R_\pp = R_\pp$ implies that $\A \subseteq R_\pp$. Since $\A$ is a finitely generated $R$-module \cite[Section 2]{arith_nr}, there exists some $\alpha \in R\backslash \pp$ such that $\alpha \A\subseteq R$. On the other hand, $\alpha\A R_\pp=\A(\alpha R_\pp)=\A R_\pp=R_\pp$, which means $\alpha\A \nsubseteq \pp$ and hence $\alpha \A +\pp=R$. Let $\aaa=\alpha \A$, $\mathfrak{b}=\alpha R$, and then the proof is done.
\end{proof}

\begin{corollary}\label{1.1}
Let $\B \in J(R)$ be an invertible fractional $R$-ideal, $\pp$ be a prime ideal of $R$. Then there exists some $x \in K^*$ such that $x^{-1}\B$ is prime to $\pp$.
\end{corollary}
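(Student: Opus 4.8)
The plan is to deduce this from Lemma \ref{1}: it suffices to produce an $x\in K^*$ with $x^{-1}\B R_\pp=R_\pp$, i.e.\ to show that the localization $\B R_\pp$ is a principal fractional ideal of the local ring $R_\pp$, and then apply Lemma \ref{1} to $\A=x^{-1}\B$.

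First I would localize the relation witnessing invertibility of $\B$. Since $\B\in J(R)$, there is a fractional $R$-ideal $\B^{-1}$ with $\B\B^{-1}=R$, and extending scalars to $R_\pp$ gives $(\B R_\pp)(\B^{-1}R_\pp)=R_\pp$. Hence we may write $1=\sum_i a_i b_i$ with $a_i\in\B R_\pp$ and $b_i\in\B^{-1}R_\pp$. Because $R_\pp$ is local with maximal ideal $\pp R_\pp$, not all summands $a_i b_i$ lie in $\pp R_\pp$, so some $a_{i_0}b_{i_0}$ is a unit of $R_\pp$; in particular $a_{i_0}\in K^*$.

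Next I claim $\B R_\pp=a_{i_0}R_\pp$. The inclusion $a_{i_0}R_\pp\subseteq\B R_\pp$ is immediate from $a_{i_0}\in\B R_\pp$. Conversely, for any $c\in\B R_\pp$ we have $c\,b_{i_0}\in(\B R_\pp)(\B^{-1}R_\pp)=R_\pp$, whence $c=(c\,b_{i_0})(a_{i_0}b_{i_0})^{-1}a_{i_0}\in a_{i_0}R_\pp$. Setting $x:=a_{i_0}\in K^*$, this gives $x^{-1}\B R_\pp=R_\pp$.

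Finally, $x^{-1}\B$ is again an invertible fractional $R$-ideal, and it satisfies $(x^{-1}\B)R_\pp=R_\pp$, so Lemma \ref{1} applies directly and yields that $x^{-1}\B$ is prime to $\pp$, as required. There is no real obstacle here; the only point to keep in mind is that $R_\pp$ need not be a discrete valuation ring when $\pp\mid\f$, so one should argue principality of $\B R_\pp$ from invertibility over the local ring (as above) rather than from a uniformizer.
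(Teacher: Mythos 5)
Your proof is correct and follows essentially the same route as the paper: both reduce the statement to the fact that an invertible fractional ideal becomes principal after localizing at $\pp$, and then invoke Lemma \ref{1}. The only difference is that the paper simply cites this local principality fact (Proposition 4.4 of the number-rings reference), whereas you prove it directly from the relation $\B\B^{-1}=R$; your argument for that step is sound.
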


\begin{proof}
Since $\B$ is invertible, $\B_\pp$ is a principal $R_\pp$-ideal \cite[Proposition 4.4]{arith_nr}, say $\B_\pp=xR_\pp$ for some $x\in K^*$. Then $(x^{-1}\B) R_\pp=R_\pp$. It follows from Lemma \ref{1} that $x^{-1}\B$ is prime to $\pp$.
\end{proof}

\begin{lemma}\label{2}
Let $S=\set{\pp_1,\cdots,\pp_r}$ be a finite set of prime ideals of $R$, $\B$ be an invertible integral $R$-ideal. Then there exists some $\alpha\in R$, such that
\begin{equation}\label{eq.lem.2}
  \frac{\B}{\alpha}R_{\pp_1}=R_{\pp_1},\qquad  \frac{\B}{\alpha}R_{\pp_i}=\B R_{\pp_i} \quad \mbox{for all }~~2\leq i \leq r.
\end{equation}
\end{lemma}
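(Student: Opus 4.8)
The plan is to reduce the two displayed identities to a single, more flexible condition on $\alpha$. Since $\B$ is invertible, $\B R_{\pp}$ is an invertible—hence cancellable—$R_{\pp}$-module for every prime $\pp$ of $R$; multiplying the identities through by $\alpha$ therefore shows that $\frac{\B}{\alpha}R_{\pp_1}=R_{\pp_1}$ is equivalent to $\alpha R_{\pp_1}=\B R_{\pp_1}$ and that $\frac{\B}{\alpha}R_{\pp_i}=\B R_{\pp_i}$ is equivalent to $\alpha R_{\pp_i}=R_{\pp_i}$, i.e. to $\alpha\in R\setminus\pp_i$. We may assume the $\pp_i$ pairwise distinct; moreover $R\subseteq\so$ is integral with $\so$ Dedekind (Proposition \ref{basic prop. of number ring}(3)), so $\dim R\leq 1$, every nonzero prime of $R$ is maximal, and $R$—hence each localization $R_{\pp_i}$—is Noetherian by Krull–Akizuki. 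Thus the nonzero $\pp_i$ are pairwise comaximal, and it suffices to produce $\alpha\in R$ generating $\B$ locally at $\pp_1$ and a unit at each of $\pp_2,\dots,\pp_r$; if $\pp_1=(0)$ the first requirement is vacuous and such $\alpha$ exists by prime avoidance, so assume $\pp_1\neq(0)$.

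I would then induct on $r$. For $r=1$, Corollary \ref{1.1} gives $x\in K^{*}$ with $x^{-1}\B$ prime to $\pp_1$, so $\B R_{\pp_1}=xR_{\pp_1}$; since $\B\subseteq R$ this forces $x\in R_{\pp_1}$, and clearing a denominator lying in $R\setminus\pp_1$ yields the required $\alpha$. For the step, suppose $\alpha'\in R$ already handles $\pp_1,\dots,\pp_{r-1}$; if $\alpha'\notin\pp_r$ take $\alpha=\alpha'$, and otherwise correct $\alpha'$ additively. Since $\alpha'\neq 0$ and $R_{\pp_1}$ is Noetherian local of dimension one, the ideal $\alpha'\pp_1R_{\pp_1}$ has radical $\pp_1R_{\pp_1}$ and hence contains $\pp_1^NR_{\pp_1}$ for some $N\geq 1$. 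The $R$-ideal $\mathfrak{d}:=\pp_1^N\cap\pp_2\cap\dots\cap\pp_{r-1}$ (read as $\pp_1^N$ when $r=2$) contains the product $\pp_1^N\pp_2\cdots\pp_{r-1}$, which is comaximal with $\pp_r$, so $\mathfrak{d}\not\subseteq\pp_r$ and we may pick $\delta\in\mathfrak{d}\setminus\pp_r$. Set $\alpha:=\alpha'+\delta\in R$. At $\pp_1$ one has $\delta\in\pp_1^NR_{\pp_1}\subseteq\alpha'\pp_1R_{\pp_1}$, so $\alpha=\alpha'(1+\delta/\alpha')$ with $1+\delta/\alpha'\in R_{\pp_1}^{*}$ and $\alpha R_{\pp_1}=\B R_{\pp_1}$; at $\pp_i$ with $2\leq i\leq r-1$ one has $\delta\in\pp_i$, $\alpha'\notin\pp_i$, so $\alpha\notin\pp_i$; and at $\pp_r$ one has $\delta\notin\pp_r$, $\alpha'\in\pp_r$, so $\alpha\notin\pp_r$. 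This closes the induction, and undoing the equivalences of the first paragraph gives \eqref{eq.lem.2}.

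The one genuinely non-formal step, which I expect to be the main obstacle, is this additive correction: one must exhibit a single $\delta$ that is simultaneously deep enough at $\pp_1$ (so as not to disturb the local generator of $\B$), contained in $\pp_2,\dots,\pp_{r-1}$, and outside $\pp_r$. Its existence rests on the pairwise comaximality of the (necessarily maximal) primes $\pp_i$ together with the Noetherianness of $R$, the latter being precisely what turns the infinitesimal condition ``deep at $\pp_1$'' into membership in the fixed power $\pp_1^N$. A secondary subtlety is that some $R_{\pp_i}$ need not be discrete valuation rings—exactly those with $\pp_i\mid\f$, by Proposition \ref{basic prop. of number ring}(1)—so the argument cannot be phrased with valuations; it only ever uses that $\B$, and therefore each $\B R_{\pp_i}$, is invertible, which is the hypothesis.
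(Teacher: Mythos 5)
Your proof is correct and takes essentially the same route as the paper's: both produce the desired $\alpha$ by perturbing a local generator of $\B R_{\pp_1}$ lying in $R$ by an element deep enough in $\pp_1$ (inside $\alpha'\pp_1 R_{\pp_1}$, which contains a power of $\pp_1$ since nonzero ideals of the local number ring $R_{\pp_1}$ contain powers of the maximal ideal), so that the generator changes only by a unit of $R_{\pp_1}$, while comaximality of the remaining primes forces $\alpha$ to be a unit at $\pp_2,\dots,\pp_r$. The only differences are organizational: you unroll the paper's single Chinese Remainder Theorem congruence into an induction on $r$ with an additive correction $\delta$, and you justify the ``power of the maximal ideal'' step via Krull--Akizuki instead of citing \cite[Lemma~5.1]{arith_nr}.
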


\begin{proof}
  From the proof of Corollary \ref{1.1}, for any generator $x$ of $\B R_{\pp_1}$ in $R_{\pp_1}$ it follows that $(x^{-1}\B)R_{\pp_1}=R_{\pp_1}$. And since $\B$ is integral we can assume that $x \in \B \subseteq R$.

 \cite[Lemma 5.1]{arith_nr} says that every nonzero ideal of the local number ring $R_\pp$ contains some power of the maximal ideal of $R_\pp$. Assume $e$ to be the exponent such that $\pp_1^e R_{\pp_1}\subseteq \B R_{\pp_1}$.
By the Chinese Remainder Theorem, there exists an $\alpha\in R$ satisfying
\begin{equation*}
  \alpha \equiv x ~\mmod \pp_1^{e+1},~~ \alpha \equiv 1 ~\mmod \pp_i ~~ \mbox{for}~2 \leq i \leq r.
\end{equation*}
We claim that $\alpha$ satisfies (\ref{eq.lem.2}).

Firstly it is obvious that $(\alpha^{-1}\B)R_{\pp_i}=\B R_{\pp_i}$ since $1+\pp_i\subseteq R_{\pp_i}^*$ for all $i \geq 2$. Assume $\alpha=x+\pi$ for some $\pi\in \pp_1^{e+1}$. Then $\pi \in \pp_1\cdot \pp_1^e\subseteq \pp_1 \cdot \B R_{\pp_1}=\pp_1 \cdot xR_{\pp_1}$, which means $\frac{\pi}{x} \in \pp_1 R_{\pp_1}$. Since $1+\pp_1 R_{\p_1}\subseteq R^*_{\pp_1}$, it follows that $\frac{\alpha}{x}=1+\frac{\pi}{x}$ is contained in $R^*_{\pp_1}$. Therefore $\alpha^{-1}\B R_{\pp_1}=x^{-1}\B R_{\pp_1}=R_{\pp_1}$.
\end{proof}

After all these preparation we can prove the main theorem in this section:

\begin{theorem}\label{main isom.}\label{main}
Let notation be as above, and $\mathfrak{g}$ be an integral $R$-ideal divided by $\f$. We have
\begin{itemize}
\item[(1)] The inclusion $J(R,\mathfrak{g})\subseteq J(R)$ induces a monomorphism $\Pic(R,\mathfrak{g})$ $\hookrightarrow \Pic(R)$.
\item[(2)] The monomorphism from $\Pic(R,\mathfrak{g})$ to $\Pic(R)$ is an epimorphism.
\end{itemize}
Then for a general number ring $R$, we have $\Pic(R,\mathfrak{g}) \cong \Pic(R)$.
\end{theorem}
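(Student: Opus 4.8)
The plan is to prove the two itemized claims separately and then read off the isomorphism. Throughout I use that $\f\mid\mathfrak{g}$ means $\mathfrak{g}\subseteq\f$, so every prime of $R$ that is prime to $\mathfrak{g}$ is a fortiori prime to $\f$, hence invertible by Proposition~\ref{basic prop. of number ring}(1); thus $J(R,\mathfrak{g})\subseteq J(R)$ and the map $\Pic(R,\mathfrak{g})\to\Pic(R)$ is well defined. Let $\pp_1,\dots,\pp_r$ be the (finitely many) prime ideals of $R$ containing $\mathfrak{g}$; this list contains every non-invertible prime of $R$. The first thing I would record is the local reformulation: an invertible fractional ideal $\mathfrak{C}$ is prime to $\mathfrak{g}$ iff $\mathfrak{C}R_{\pp_i}=R_{\pp_i}$ for all $i$, and an invertible \emph{integral} ideal with this property lies in $J(R,\mathfrak{g})$, since it is then prime to $\f$, so it factors into invertible primes by Proposition~\ref{basic prop. of number ring}(2), each of which avoids every $\pp_i$ and hence lies in $S(R,\mathfrak{g})$.

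For (1): the kernel of $\Pic(R,\mathfrak{g})\to\Pic(R)$ is trivial iff $P(R)\cap J(R,\mathfrak{g})\subseteq P(R,\mathfrak{g})$, so I would prove this inclusion. Given $\alpha R\in J(R,\mathfrak{g})$ with $\alpha\in K^*$, write $\alpha R=\aaa\mathfrak{b}^{-1}$ with $\aaa,\mathfrak{b}$ coprime integral ideals prime to $\mathfrak{g}$, using that $J(R,\mathfrak{g})$ is free abelian over $S(R,\mathfrak{g})$ as for $J(R,\f)$. Since $\mathfrak{b}$ is prime to $\mathfrak{g}$ it lies in no $\pp_i$, so prime avoidance gives $\delta\in\mathfrak{b}$ with $\delta\notin\pp_i$ for every $i$. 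Then $\mathfrak{c}:=\mathfrak{b}^{-1}\delta R$ is integral and $\delta R=\mathfrak{b}\mathfrak{c}$; localizing at each $\pp_i$, where $\mathfrak{b}$ and $\delta$ both generate the unit ideal, gives $\mathfrak{c}R_{\pp_i}=R_{\pp_i}$, so $\mathfrak{c}$ is prime to $\mathfrak{g}$. Hence $\beta:=\alpha\delta$ satisfies $\beta R=\aaa\mathfrak{c}$, an integral ideal prime to $\mathfrak{g}$, so $\beta\in R$ with $\beta R+\mathfrak{g}=R$; similarly $\delta R+\mathfrak{g}=R$. Therefore $\alpha R=(\beta R)(\delta R)^{-1}\in P(R,\mathfrak{g})$.

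For (2): represent a given class in $\Pic(R)$ by an invertible integral ideal $\B$ (multiply any representative by a suitable element of $R$); it then suffices to find $x\in K^*$ with $x^{-1}\B$ prime to $\mathfrak{g}$. I would obtain this by applying Lemma~\ref{2} once for each $i=1,\dots,r$, each time listing $\pp_i$ as the first element of the set $S$: Lemma~\ref{2} replaces the current invertible integral representative $\B'$ by $\B'/\alpha_i$, which becomes prime to $\pp_i$ while its localizations at the remaining $\pp_j$ are unchanged, so that primes already cleared stay cleared. To iterate I restore integrality after each step: the ideal $(\B'/\alpha_i)^{-1}\cap R$ of integralizing elements meets none of the $\pp_j$ — at an already cleared $\pp_j$ the inverse is the unit ideal, and at the other $\pp_j$ the ideal $\B'/\alpha_i$ is still integral so its inverse contains $R$ — so prime avoidance yields $d_i\in R$ avoiding every $\pp_j$, and $d_i(\B'/\alpha_i)$ is again invertible, integral, and unchanged at each $\pp_j$. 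After $r$ steps the representative is an invertible integral ideal prime to all $\pp_j$, hence to $\mathfrak{g}$, hence in $J(R,\mathfrak{g})$, and it still represents the given class modulo $P(R)$. Combining (1) and (2) gives $\Pic(R,\mathfrak{g})\cong\Pic(R)$.

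I expect (1) to be quick once the free-abelian structure of $J(R,\mathfrak{g})$ and prime avoidance are in hand. The main obstacle will be the iteration in (2): ordering the $\pp_i$ and inserting the integrality-restoring multiplications so that no previously cleared prime is spoiled, and verifying at each stage that the integralizer ideal genuinely avoids all of $\pp_1,\dots,\pp_r$; this is where Lemma~\ref{2} (and the approximation results Lemma~\ref{1} and Corollary~\ref{1.1} behind it) does the real work.
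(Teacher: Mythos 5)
Your proof is correct, and it rests on the same tools as the paper's: Lemma~\ref{2}, localization at the finitely many primes containing $\mathfrak{g}$, and prime avoidance. Two differences are worth recording. For (1) you give a complete argument, whereas the paper simply defers to the order case \cite[Lemma 3.9]{Chang Lv}; your factorization $\alpha R=(\aaa\mathfrak{c})(\delta R)^{-1}$, with both factors principal, integral and coprime to $\mathfrak{g}$, is a clean self-contained way to see that $P(R)\cap J(R,\mathfrak{g})\subseteq P(R,\mathfrak{g})$. For (2) you iterate Lemma~\ref{2} one prime at a time and must re-integralize the representative after each step --- and you rightly flag that bookkeeping as the delicate point. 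The paper sidesteps the iteration entirely: it applies Lemma~\ref{2} to the \emph{same} integral ideal $\B$ once for each $i$ (with $\pp_i$ listed first in $S$), obtaining $\alpha_1,\dots,\alpha_r$, and then sets $\gamma=\prod_{i}\alpha_i$. The second clause of Lemma~\ref{2}, namely $\frac{\B}{\alpha_i}R_{\pp_j}=\B R_{\pp_j}$ for $j\neq i$, says exactly that each $\alpha_i$ is a unit in $R_{\pp_j}$ for every $j\neq i$, so
$$\frac{\B}{\gamma}R_{\pp_i}=\frac{1}{\alpha_i}\B R_{\pp_i}=R_{\pp_i}$$
holds for all $i$ simultaneously, with no intermediate representatives and no integrality to restore; a single application of the local criterion (your multi-prime version of Lemma~\ref{1}) then puts $\gamma^{-1}\B$ in $J(R,\mathfrak{g})$. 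Both routes are valid; the point to take away is that the ``unchanged at the other primes'' clause of Lemma~\ref{2} was designed precisely so that the single product $\gamma$ does all the work, which dissolves the obstacle you anticipated.
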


\begin{proof}
For proving injectivity, it is analogous to the case of orders. One may refer to \cite[Lemma 3.9]{Chang Lv}.

For surjectivity, let $S=\set{\mbox{all prime ideals dividing} ~\mathfrak{g}}=\set{\pp_1,\cdots,\pp_r}$. Take an invertible $R$-ideal $\B$, which can be assumed to be integral. It remains to show that there is a representative in the ideal class of $\B$ which is prime to all prime ideals in $S$, i.e. there exists some $\gamma \in K^*$ such that $\gamma^{-1}\B \in J(R,S)$.

By Lemma \ref{2}, for each $i\in \set{1,\cdots,r}$ we find an $\alpha_i \in R$ such that
\begin{equation*}
  \frac{\B}{\alpha_i}R_{\pp_i}=R_{\pp_i},\qquad  \frac{\B}{\alpha_i}R_{\pp_j}=\B R_{\pp_j} \quad \mbox{for all} ~~j \neq i.
\end{equation*}
Let $\gamma=\prod^r_{i=1}\alpha_i$. Then for all $i\in \set{1,\cdots,r}$,
$$\frac{\B}{\gamma}R_{\pp_i}=\frac{\B}{\prod_j \alpha_j}R_{\pp_i}=\frac{1}{\alpha_i}\frac{\B}{\prod_{j\neq i}\alpha_j}R_{\pp_i}=\frac{1}{\alpha_i}\B R_{\pp_i}=R_{\pp_i}.$$
By Lemma \ref{1} it means that $\gamma^{-1}\B$ is prime to all $\pp_i$ in $S$, or equivalently $\gamma^{-1}\B \in J(R,S)$.
\end{proof}

\subsection{The Ring Class Field Defined by $R$}
For readers' convenience, we state here the classic existence theorem of class field theory. See \cite{janusz} and \cite{neukirch_alnt} for standard references.

\begin{theorem}
Let $\m$ be a modulus of $K$, $H^\m$ be a congruence subgroup for $\m$. Then there exists a unique abelian extension $L$ of $K$ such that all primes of $K$ ramified in $L$, finite or infinite, divide $\m$, and if
$$\varphi^\m_{L/K}:J^\m_K\longrightarrow \Gal(L/K)$$
is the Artin map of $L/K$, then $H^\m=\ker\varphi^\m_{L/K}$. Moreover, the Artin reciprocity law holds for $(L,K,\m)$, i.e. $\ker\varphi^\m_{L/K}=P^\m_{K,1}N_{L/K}(J^\m_L)$. Since the Artin map is surjective, we also have
$$J^\m_K/H^\m=J^\m_K/P^\m_{K,1}N_{L/K}(J^\m_L)\cong \Gal(L/K).$$
\end{theorem}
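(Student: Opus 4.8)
This final statement is the classical Existence Theorem of class field theory together with the Artin Reciprocity Law; being the two deepest results of the global theory, I would not attempt an independent proof but would instead assemble the standard development of \cite{janusz} and \cite{neukirch_alnt} around its three pillars: the two fundamental inequalities, the reciprocity law, and the construction of the ray class field. Fix a finite abelian extension $L/K$ whose ramified primes (finite and infinite) all divide $\m$, and write $H_{L/K}:=P^\m_{K,1}N_{L/K}(J^\m_L)$ for its norm group. The Artin map $\varphi^\m_{L/K}\colon J^\m_K\to\Gal(L/K)$ is the homomorphism sending an unramified prime $\p\nmid\m$ to its Frobenius $(\p,L/K)$. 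I would first record that $\varphi^\m_{L/K}$ is surjective: the fixed field of the subgroup generated by all Frobenius elements is an extension of $K$ in which every prime splits completely, and such an extension is trivial, so the Frobenius elements generate $\Gal(L/K)$.

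The quantitative heart is the pair of fundamental inequalities bounding the index $[J^\m_K:H_{L/K}]$. For the upper bound $[J^\m_K:H_{L/K}]\le[L:K]$ I would argue analytically: factoring the Dedekind zeta function of $L$ into Hecke $L$-functions of the characters of $J^\m_K/H_{L/K}$ and invoking the holomorphy and nonvanishing at $s=1$ of the nontrivial ones forces the number of such characters, hence the index, to be at most $[L:K]$. For the lower bound $[J^\m_K:H_{L/K}]\ge[L:K]$ it suffices to treat the cyclic case and compute a Herbrand quotient (a cohomological index of the idele class group), with the local contributions supplied by local class field theory; the passage from cyclic to abelian is standard. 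Together these give $[J^\m_K:H_{L/K}]=[L:K]$. That $N_{L/K}(J^\m_L)\subseteq\ker\varphi^\m_{L/K}$ is elementary, since a prime of $L$ of residue degree $f$ has norm $\p^f$ and $(\p,L/K)^f$ is trivial in the decomposition group.

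The genuinely hard step, and the one I expect to be the main obstacle, is the reciprocity law, namely the inclusion $P^\m_{K,1}\subseteq\ker\varphi^\m_{L/K}$ (so that $H_{L/K}\subseteq\ker\varphi^\m_{L/K}$). This is not an index statement but a functorial one, and I would establish it by Artin's reduction to the cyclotomic case: for a cyclotomic extension $L=K(\zeta_m)$ the law is verified directly from the explicit action of Frobenius on roots of unity, and the general (cyclic, hence abelian) case is reduced to this one by a base-change argument — introducing an auxiliary cyclotomic extension, comparing $\varphi^\m_{L/K}$ with the Artin map of that extension through the functoriality of the Frobenius symbol under restriction and norm, and verifying the resulting consistency relation (the crossed-homomorphism argument). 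Granting this, surjectivity of $\varphi^\m_{L/K}$, the inclusion $H_{L/K}\subseteq\ker\varphi^\m_{L/K}$, and the index equality combine by a cardinality count to yield
$$\ker\varphi^\m_{L/K}=H_{L/K}\quad\text{and}\quad J^\m_K/H_{L/K}=J^\m_K/P^\m_{K,1}N_{L/K}(J^\m_L)\;\xrightarrow{\ \sim\ }\;\Gal(L/K).$$

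It remains to deduce existence and uniqueness for an arbitrary congruence subgroup $H^\m$. Uniqueness rests on the inclusion-reversing correspondence $L_1\subseteq L_2\iff H_{L_2/K}\subseteq H_{L_1/K}$, itself a consequence of the reciprocity law applied to the compositum $L_1L_2$; hence two abelian extensions with the same norm group, i.e.\ the same kernel $H^\m$, coincide. For existence I would first produce the ray class field $K_\m$ attached to $P^\m_{K,1}$, whose existence follows from the inequalities applied to sufficiently large cyclotomic extensions together with the reciprocity law and gives $\Gal(K_\m/K)\cong\Cl^\m_K$; then, given $P^\m_{K,1}\subseteq H^\m\subseteq J^\m_K$, I take $L$ to be the fixed field in $K_\m$ of $\varphi^\m_{K_\m/K}(H^\m)$. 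Since $L\subseteq K_\m$, every prime ramified in $L$ divides $\m$, and the identity $\ker\varphi^\m_{L/K}=H^\m$ follows from the correspondence above, which completes the proof.
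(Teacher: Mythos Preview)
Your proposal is a reasonable high-level sketch of the classical proof of the Existence Theorem and Artin Reciprocity, but it is worth noting that the paper does not prove this statement at all: it is quoted verbatim as background, introduced with ``For readers' convenience, we state here the classic existence theorem of class field theory,'' and the proof is entirely deferred to the references \cite{janusz} and \cite{neukirch_alnt}. So there is no ``paper's own proof'' to compare against; your outline simply expands on what the cited textbooks do, which is appropriate but not something the paper itself undertakes.
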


This theorem says that such a pair $(\m,H^\m)$ uniquely determines an abelian extension $L$ of $K$. Choosing the modulus and congruence subgroup as we want, then we obtain

\begin{def_thm}
Let all notation be as above. Then there exists a unique abelian extension $H_R$ of $K$, such that all primes of $K$ ramified in $H_R$ divide $\tilde{\f}\m_0$, and the kernel of the Artin map $\varphi^{\tilde{\f}\m_0}_{H_R/K}:J^{\tilde{\f}\m_0}_K\longrightarrow \Gal(H_R/K)$ is $P^{\tilde{\f}\m_0}_{K,R}$. Moreover, we have the isomorphisms
$$\Pic(R)\cong J_K^{\tilde{\f}\m_0}/P_{K,R}^{\tilde{\f}\m_0}\cong \Gal(H_R/K).$$
We call $H_R$ the \textbf{ring class field} defined by the number ring $R$ (or number ring class field of $R$).
\end{def_thm}

\begin{remark}\label{Artin1}
 We state here a conclusion that will be used in next section: For an unramified prime ideal $\p$ of $\ok$, we have:
\begin{equation*}
  \begin{split}
     \p \text{ splits completely in } H_R &\Leftrightarrow \varphi^{\tilde{\f}\m_0}_{H_R/K}(\p)=1, \\
       &\Leftrightarrow [\p]=[1] \text{ in } J_K^{\tilde{\f}\m_0}/P_{K,R}^{\tilde{\f}\m_0},\\
       &\Leftrightarrow [\p\oo_{K,T}\cap R]=[1]\text{ in } \Pic(R).
   \end{split}
\end{equation*}
These equivalences are direct except the first one, which one may refer to \cite[Corollary 5.21]{cox}.
\end{remark}

\section{A Characterization of Number Ring Class Fields}\label{characterization}
In this section, we give a more explicit characterization of the ring class field defined by a general number ring $R$.

We keep all notation in Section $\S$\ref{theory}. First we assume that the integral closure $\so$ of $R$ is the ring of $T$-integers where $T$ contains only one prime ideal of $K$. We attain our main conclusion as

\begin{theorem}\label{charact. thm}
Assume that $T=\{\p_r\}$. Set $\ord=R\cap\ok$. It is an order in $K$. Denote by $H_\ord$ and $H_R$ the ring class fields defined by $\ord$ and $R$ respectively. Then $H_R$ is a subfield of $H_\ord$. Moreover, it is the invariant subfield of $H_\ord$ fixed by the decomposition group $D_{\p_r}$ of $\p_r$ in the Galois group of $H_\ord/K$.
\end{theorem}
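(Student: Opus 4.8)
The plan is to compare the two ring class fields via their associated congruence subgroups, using the explicit ideal-theoretic descriptions established in Section~\ref{theory}. Since $T=\{\p_r\}$, we have $\m_0=\p_r$, and $\so=\oo_{K,\{\p_r\}}=\ok[\frac1{\p_r}]$ in the appropriate sense. For the order $\ord=R\cap\ok$, its conductor relative to $\ok$ is some integral $\ok$-ideal $\f_\ord$, and by the results of \cite{Chang Lv} (the order case, which is the special case $T=\emptyset$ of our theory) the ring class field $H_\ord$ corresponds to the congruence subgroup $P^{\f_\ord}_{K,\ord}\subseteq J^{\f_\ord}_K$ for the modulus $\f_\ord$ (no infinite part). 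Meanwhile $H_R$ corresponds to $P^{\tilde\f\m_0}_{K,R}\subseteq J^{\tilde\f\m_0}_K$ for the modulus $\tilde\f\m_0=\tilde\f\p_r$. The first step is to pin down the precise relationship between $\f$ (the conductor of $R$ in $\so$), $\tilde\f=\f\cap\ok$, and $\f_\ord$ (the conductor of $\ord$ in $\ok$); I expect $\tilde\f$ and $\f_\ord$ to agree away from $\p_r$, which is exactly what is needed because $J^{\tilde\f\m_0}_K$ only sees ideals prime to $\p_r$.

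Next I would show that the natural inclusion $J^{\f_\ord}_K \hookleftarrow J^{\tilde\f\m_0}_K$ (ideals prime to $\tilde\f\p_r$ are in particular prime to $\f_\ord$, using the previous step) induces, after passing to ray class groups, the containment $H_R\subseteq H_\ord$. Concretely, one checks that $P^{\f_\ord}_{K,\ord}\cap J^{\tilde\f\m_0}_K$ maps into $P^{\tilde\f\m_0}_{K,R}$: a generator $\alpha\so\cap\ok$ with $\alpha\in\ord$, $\alpha\ord+\f_\ord=\ok$ is also an element of $R$ with $\alpha R+\f=R$ (since $\ord\subseteq R$ and $\f_\ord\subseteq\f$ up to the relevant localizations), hence its extension-contraction to $R$ and back to $\ok$ lands in the generating set $\check P^{\tilde\f\m_0}_{K,R}$. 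This gives $P^{\f_\ord}_{K,\ord}\cap J^{\tilde\f\m_0}_K\subseteq P^{\tilde\f\m_0}_{K,R}$ as subgroups of $J^{\tilde\f\m_0}_K$, and by the Artin correspondence (the existence theorem quoted above, together with the standard fact that a smaller congruence subgroup corresponds to a larger field) we conclude $H_R\subseteq H_\ord$.

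For the second, sharper claim — that $H_R$ is exactly the fixed field of the decomposition group $D_{\p_r}\subseteq\Gal(H_\ord/K)$ — I would argue at the level of Galois groups. The fixed field of $D_{\p_r}$ is the largest subfield of $H_\ord$ in which $\p_r$ splits completely, and its Galois group over $K$ is $\Gal(H_\ord/K)/\langle D_{\p_r}\rangle^{\mathrm{norm}}$, the quotient by the normal closure of $D_{\p_r}$. Since $H_\ord/K$ is abelian, $D_{\p_r}$ is already normal, so the fixed field $E$ has $\Gal(E/K)\cong\Gal(H_\ord/K)/D_{\p_r}$. Under the Artin isomorphism $\Gal(H_\ord/K)\cong J^{\f_\ord}_K/P^{\f_\ord}_{K,\ord}$, the subgroup $D_{\p_r}$ is generated by the Frobenius class $[\p_r]$. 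So I must show the congruence subgroup of $H_R$, namely $P^{\tilde\f\m_0}_{K,R}$ viewed inside $J^{\f_\ord}_K$ (by the isomorphism $J^{\tilde\f\m_0}_K\cong J^{\f_\ord}_K/\langle\p_r\rangle$ coming from the fact that adjoining the single prime $\p_r$ to the set of allowed denominators just quotients by that prime's class), equals the subgroup of $J^{\f_\ord}_K$ generated by $P^{\f_\ord}_{K,\ord}$ together with $\p_r$. One containment follows from Step~2; for the reverse, take $\alpha\in R$ with $\alpha R+\f=R$ and, using $\so=\ok[\frac1{\p_r}]$ and Proposition~\ref{basic prop. of number ring}, write $\alpha = \beta/\pi^k$ with $\beta\in\ok$, $\pi$ a generator of $\p_r$-adic valuation, and $\beta\ord$ prime to $\f_\ord$ — so that $\alpha\so\cap\ok$ differs from $\beta\ok$ by a power of $\p_r$ in $J^{\f_\ord}_K/\langle\p_r\rangle$. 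This identifies the two quotient groups and hence $H_R=E$.

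The main obstacle I anticipate is the bookkeeping around conductors in Step~1 and the $\p_r$-part of Step~3: one has to be careful that $\ord=R\cap\ok$ really is an order (finite index in $\ok$), that its conductor $\f_\ord$ relates correctly to $\tilde\f$ precisely away from $\p_r$, and that the identification $J^{\tilde\f\m_0}_K\cong J^{\f_\ord}_K/\langle\p_r\rangle$ respects the relevant subgroups — in particular that no spurious congruence conditions at $\p_r$ survive when passing from $R$ to $\ord$. Once these local compatibilities at $\p_r$ are nailed down, the rest is a formal consequence of class field theory and the dictionary between subgroups of ray class groups and subfields.
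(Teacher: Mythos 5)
Your overall architecture is the same as the paper's: the paper proves (Lemma~\ref{lemma 3.2}) the exact sequence $\prod_{\p\in T}\p^{\zz}\to\Pic(\ord)\to\Pic(R)\to 1$, deduces $\Pic(R)\cong J^{\f_\ord}_K/(P^{\f_\ord}_{K,\ord}\cdot\langle\p_r\rangle)$, and identifies the image of $\p_r$ under the Artin map with a generator of $D_{\p_r}$. Your Steps 1--2 and the Galois-theoretic conclusion match this, and your Step~1 expectation is confirmed by the paper ($\f_\ord\mid\tilde\f$ suffices; in fact $\f_\ord=\tilde\f$ by Proposition~\ref{local.ord.}).

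The genuine gap is in the reverse containment of Step~3, which is the technical heart of the whole argument. You take $\alpha\in R$ with $\alpha R+\f=R$ and write $\alpha=\beta/\pi^k$ with ``$\pi$ a generator of the $\p_r$-adic valuation,'' claiming $\alpha\so\cap\ok$ and $\beta\ok$ differ by a power of $\p_r$. This fails as stated on two counts. First, for $\mathrm{div}(\pi^k)$ to be supported only at $\p_r$ you need $\pi\ok$ to be a power of $\p_r$, i.e.\ essentially that $\p_r$ is principal (a mere uniformizer $\pi$ contributes extraneous prime factors $\mathfrak{b}^k$ with $\pi\ok=\p_r\mathfrak{b}$, which do not die in $J^{\f_\ord}_K/\langle\p_r\rangle$). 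Second, even after replacing $\pi$ by a generator $\pi_0$ of $\p_r^{h}$, you need $\beta=\pi_0^k\alpha$ to lie in $\ord=R\cap\ok$ and to be prime to $\f_\ord$; but $\pi_0\in\ok$ need not lie in $R$, so $\beta\in\ok$ does not give $\beta\in\ord$. The paper's fix is the missing idea: since the $T$-unit group $\so^*$ has rank $r+\#T$ and $[\so^*:R^*]$ is finite, there is a \emph{unit} $\epsilon\in R^*$ with $\mathrm{ord}_{\p_r}(\epsilon)$ arbitrarily negative along a fixed arithmetic progression and $\mathrm{ord}_\p(\epsilon)=0$ elsewhere; then $\gamma=\epsilon\alpha$ satisfies $\gamma\in\ok\cap R=\ord$, $\gamma R=\alpha R$ (so the principal $R$-ideal, hence the class in $\Pic(R)$, is unchanged), and $\gamma\ok=(\gamma\so\cap\ok)\cdot\p_r^{x}$ with $\gamma\ok$ prime to $\tilde\f$. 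Without this unit-theoretic input your denominator-clearing step does not go through; with it, your argument becomes exactly the paper's proof of $\phi^{-1}(P(R,\f))=P(\ord,\tilde\f)$. A minor additional point: the identification $J^{\tilde\f\m_0}_K\cong J^{\f_\ord}_K/\langle\p_r\rangle$ should be mediated by the change-of-modulus isomorphism $\Pic(\ord)\cong\Pic(\ord,\mathfrak{g})$ of Theorem~\ref{main} rather than asserted at the level of ideal groups, since the two groups a priori live over different moduli.
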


By the class field theory, the crucial point is to find out the relation between their Picard groups.

\begin{lemma}\label{lemma 3.2}
We have an exact sequence
\begin{equation}\label{eq.3.2}
  \prod_{\p\in T} \p^\zz \ra \Pic(\ord)\xrightarrow{\Phi} \Pic(R)\ra 1,
\end{equation}
where the first map is the contraction of prime ideals, and the second map is the natural extension of ideals.
\end{lemma}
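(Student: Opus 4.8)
The plan is to recast \eqref{eq.3.2} into the ideal-theoretic language of \S\ref{theory}, applied simultaneously to $R$ (integral closure $\so=\oo_{K,T}$, conductor $\f$) and to the order $\ord=R\cap\ok$ (integral closure $\ok$, conductor $\tilde\f=\f\cap\ok$), and then to check three things: $\Phi$ is surjective, the image of the first map lies in $\ker\Phi$, and $\ker\Phi$ lies in that image. Since $\tilde\f$ is prime to $\m_0=\prod_{\p\in T}\p$ (Proposition~\ref{basic prop. of number ring}(4)), every $\p\in T$ is prime to the conductor of $\ord$, so $\p\cap\ord$ is an invertible $\ord$-ideal (Proposition~\ref{basic prop. of number ring}(2) for $\ord$), and the first map, sending a generator $\p$ to $[\p\cap\ord]$, is a well-defined homomorphism; $\Phi$ is well-defined because extension of ideals takes invertible (resp.\ principal) $\ord$-ideals to invertible (resp.\ principal) $R$-ideals.

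Surjectivity of $\Phi$ and the inclusion $\mathrm{Im}\subseteq\ker\Phi$ are routine bookkeeping with the bijections of \S\ref{theory}. For surjectivity: by Theorem~\ref{main} each class of $\Pic(R)$ is represented by some $\mathfrak B\in J(R,\f)$, which by Corollary~\ref{J(R,f)} corresponds to an $\ok$-ideal prime to $\tilde\f\m_0$, hence to an invertible $\ord$-ideal $\mathfrak b$ prime to $\tilde\f$ whose extension to $\ok$ is that ideal; chasing extensions shows $\mathfrak bR$ and $\mathfrak B$ have the same image in $J(\so,\f)$, so $\mathfrak bR=\mathfrak B$ by injectivity of extension on $J(R,\f)$, and $\Phi[\mathfrak b]=[\mathfrak B]$. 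For the inclusion: if $\p\in T$ then $\p\so=\so$ (Proposition~\ref{basic prop. of number ring}(4)), so $(\p\cap\ord)R$ has extension $(\p\cap\ord)\so=\p\so=\so$ to $\so$, whence $(\p\cap\ord)R=R$ and $\Phi[\p\cap\ord]=[1]$.

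The substantive point is $\ker\Phi\subseteq\mathrm{Im}$. Take $[\A]\in\ker\Phi$; we may assume $\A$ is an integral $\ord$-ideal prime to $\tilde\f$. Then $\A R$ is principal and prime to $\f$, say $\A R=\gamma R$ with $\gamma\in\A R\subseteq R$ and $\gamma R+\f=R$. Extending the last equality to $\so$ gives $\gamma\so+\f=\so$, so $\gamma$ is a unit in $\ok_\p$ for every prime $\p\mid\tilde\f$; in particular $\gamma\ok$ is prime to $\tilde\f$. The key local input is that for every prime $\mathfrak q$ of $\ord$ above $\tilde\f$ one has $R_{\mathfrak q}=\ord_{\mathfrak q}$: the primes of $\ok$ above $\mathfrak q$ divide $\tilde\f$ and hence lie outside $T$, so $\so$ localizes to $\ok$ above $\mathfrak q$, giving $R_{\mathfrak q}\subseteq\ok_{\mathfrak q}$ and therefore $R_{\mathfrak q}=R_{\mathfrak q}\cap\ok_{\mathfrak q}=(R\cap\ok)_{\mathfrak q}=\ord_{\mathfrak q}$ (localization commutes with intersection). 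Thus $\gamma\in R\subseteq\ord_{\mathfrak q}$, and being a unit at every prime of $\ok$ above $\mathfrak q$ it cannot lie in the maximal ideal of the local ring $\ord_{\mathfrak q}$, so $\gamma\in\ord_{\mathfrak q}^{*}$; by Lemma~\ref{1} the principal fractional ideal $\gamma\ord$ is then prime to $\tilde\f$, so $[\gamma\ord]=[1]$ in $\Pic(\ord)$. Finally $(\A\ok)\so=(\A R)\so=\gamma\so$, so $\A\ok$ and $\gamma\ok$ agree away from $T$: $\A\ok=\gamma\ok\cdot\prod_{\p\in T}\p^{n_\p}$ for suitable $n_\p\in\zz$. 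Hence $\gamma^{-1}\A$ is an invertible $\ord$-ideal prime to $\tilde\f$ with $(\gamma^{-1}\A)\ok=\prod_{\p\in T}\p^{n_\p}$, so Corollary~\ref{J(R,f)} applied to $\ord$ gives $\gamma^{-1}\A=\prod_{\p\in T}(\p\cap\ord)^{n_\p}$, whence $[\A]=[\gamma^{-1}\A]=\prod_{\p\in T}[\p\cap\ord]^{n_\p}$ lies in the image of the first map.

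I expect the only real obstacle to be the local identification $R_{\mathfrak q}=\ord_{\mathfrak q}$ above the conductor and the ensuing fact that $\gamma$ is a unit there: this is exactly where the hypotheses $R\cap\ok=\ord$ and $\so=\oo_{K,T}$ (with $T$ disjoint from the primes dividing $\tilde\f$) are used, everything else following formally from \S\ref{theory}. Alternatively one can argue $\ker\Phi\subseteq\mathrm{Im}$ through the congruence subgroups of Theorem~\ref{thm_congruencegp}, by verifying that $P^{\tilde\f\m_0}_{K,R}$ pulls back to $\langle\p:\p\in T\rangle\cdot P^{\tilde\f}_{K,\ord}$ under the natural surjection $J^{\tilde\f}_K\to J^{\tilde\f\m_0}_K$, but this amounts to the same local computation.
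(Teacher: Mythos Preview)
Your argument is correct, and the overall scaffolding---pass to prime-to-conductor Picard groups via Theorem~\ref{main} and Corollary~\ref{J(R,f)}, then identify the kernel---is the same as the paper's. Where you genuinely diverge is in the core step $\ker\Phi\subseteq\mathrm{Im}$. The paper phrases this as $\phi^{-1}(P(R,\f))\subseteq P(\ord,\tilde\f)\cdot\langle\p:\p\in T\rangle$ and argues globally on generators: given $\alpha R\in\check P(R,\f)$ with $\alpha\in R$, it invokes the finiteness of $[\so^{*}:R^{*}]$ (hence the $S$-unit theorem) to find $\epsilon\in R^{*}$ with $\gamma=\epsilon\alpha\in\ok\cap R=\ord$ and $\gamma\ord+\tilde\f=\ord$, exhibiting a principal preimage in $\check P(\ord,\tilde\f)$ explicitly. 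You instead keep $\gamma\in R$ and establish the local identity $R_{\mathfrak q}=\ord_{\mathfrak q}$ at every prime $\mathfrak q\supseteq\tilde\f$, forcing $\gamma\in\ord_{\mathfrak q}^{*}$ and hence $\gamma\ord\in J(\ord,\tilde\f)$. Your route is more structural and avoids the unit theorem; the paper's is shorter and produces an honest element of $\ord$.

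Two small points to tighten. First, you call $\tilde\f$ the conductor of $\ord$, but at this stage only $\f_{\ord}\mid\tilde\f$ is available (the paper proves this inclusion here and the equality only in Proposition~\ref{local.ord.}); your argument needs only the divisibility, so just say so. Second, Lemma~\ref{1} is stated for a single prime, while you need $\gamma\ord$ prime to all $\mathfrak q\supseteq\tilde\f$ simultaneously; the passage from ``$\gamma\in\ord_{\mathfrak q}^{*}$ for each $\mathfrak q$'' to ``$\gamma\ord\in J(\ord,\tilde\f)$'' requires one prime-avoidance step (choose $s\in\ord\setminus\bigcup\mathfrak q$ with $s\gamma\in\ord$, as in the proof of Theorem~\ref{main}), which you should make explicit. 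Likewise, your clause ``$\so$ localizes to $\ok$ above $\mathfrak q$'' deserves one line of justification: for $x\in\so$ pick $s\in\prod_{\p\in T}(\p\cap\ord)^{N}\setminus\mathfrak q$ with $N$ large, so $sx\in\ok$.
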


\begin{proof}
Recall that $\tilde\f=\f\cap\ok$ where $\f$ is the conductor of $R$. It is divided by the conductor $\f_\ord$ of order $\ord$. For an element $x\in \tilde\f$, $x$ is an algebraic integer and $x\ok\subseteq x\so\cap\ok\subseteq R\cap\ok=\ord$. It follows that $x\in \f_\ord$. Then $\f_\ord\mid \tilde\f$.

By Theorem \ref{main}, $\Pic(\ord)\cong\Pic(\ord,\tilde\f)$ and $\Pic(R)\cong\Pic(R,\f)$. There is also a natural map from $\Pic(\ord,\tilde\f)$ to $\Pic(R,\f)$ by extension of ideals, which is compatible with $\phi$ through these two isomorphisms. We still denote it by $\phi$. It amounts to verifying the following exact sequence
\begin{equation}\label{eq.lem.3.2.1}
  \prod_{\p\in T} \p^\zz \ra \Pic(\ord,\tilde\f)\xrightarrow{\Phi} \Pic(R,\f)\ra 1.
\end{equation}
Recall that $\Pic(\ord,\tilde\f)=J(\ord,\tilde\f)/P(\ord,\tilde\f)$, $\Pic(R,\f)=J(R,\f)/P(R,\f)$. Since $J(\ord,\tilde\f)\cong J(\ok,\tilde\f)$ and $J(R,\f)\cong J(\so,\f)$ by Corollary \ref{J(R,f)}, then following Proposition \ref{basic prop. of number ring} (4) there is an exact sequence
\begin{equation}\label{eq.lem.3.2.2}
  \prod_{\p\in T}\p^\zz\ra J(\ord,\tilde\f)\xrightarrow{\phi} J(R,\f)\ra 1.
\end{equation}
For proving (\ref{eq.lem.3.2.1}) it only remains to show that $\phi^{-1}(P(R,\f))=P(\ord,\tilde\f)$.

Take an element $\alpha R$ in $P(R,\f)$. It has the form $\beta_1 R/\beta_2 R$ where $\beta_i\in R$, $\beta_i R+\f=R$. We may assume that $\alpha\in R$. Recall that the integral closure $\so$ of $R$ is the ring of $T$-integers, and $\so^*$ is the group of $T$-units, usually denoted by $U_T$. Here we need two facts from \cite{arith_nr}: the torsion-free part of $U_T$ is of rank ($r+\#T$) where $r$ is the rank of the torsion-free part of $\ok^*$; and the index $[\so^*:R^*]$ is finite. These two facts imply that there exists an $\epsilon\in R^*$ such that $\gamma=\epsilon \alpha$ satisfies $\mbox{ord}_\p(\gamma)\geq 0$ for all $\p\in T$. Then $\gamma\in \ok\cap R=\ord$ and $\gamma R=\alpha R$. Moreover $\gamma R+\f=R$, which implies $\gamma\so+\f=\so$, and then $\gamma\so\cap\ok+\tilde\f=\ok$. By Proposition \ref{basic prop. of number ring} (4) it follows that $\gamma\ok=(\gamma\so\cap\ok)\prod_{\p\in T}\p^{x_\p}$, and then $\gamma\ok+\tilde\f=\ok$ since prime ideals in $T$ is prime to $\tilde\f$. Therefore $\gamma\ord+\tilde\f=\ord$, and then $\gamma\ord\in P(\ord,\tilde\f)$ is a preimage of $\alpha R$ under the map $\phi$. This completes the proof.
\end{proof}

\begin{proof}[Proof of Theorem 3.1]
By the definition of ring class field, it follows that $\Pic(\ord)\cong\Pic(\ord,\f_\ord)\cong J^{\f_\ord}_K/P^{\f_\ord}_{K,\ord}\cong \Gal(H_\ord/K)$. Recall that the last isomorphism is constructed through the Artin map $\varphi^{\f_\ord}_{H_\ord/K}$. Then Lemma \ref{lemma 3.2} implies that $\Pic(R)\cong J^{\f_\ord}_K/(P^{\f_\ord}_{K,\ord}\cdot\langle\p_r\rangle)$, where we denote by $\langle\p_r\rangle$ the group generated by $\p_r$. Note that the subgroup in $\Gal(H_\ord/K)$ generated by $\varphi^{\f_\ord}_{H_\ord/K}(\p_r)$ is exactly the decomposition group of $\p_r$. Then by the uniqueness of ring class field, $H_R$ is the subfield of $H_\ord$ which is fixed by the decomposition group of $\p_r$ in $H_\ord/K$.
\end{proof}

In the case of the above theorem, it amounts to the fact that $H_R$ is the maximal subfield of $H_\ord$ in which $\p_r$ splits completely. Since Lemma \ref{lemma 3.2} holds for general $T$, then following the same argument we obtain the more general

\begin{corollary}
Let all notation be as above except removing the assumption of $T$. Let $T$ be a finite set of prime ideals of $K$, say $\set{\p_1,\cdots,\p_t}$. Then $H_R$ is the maximal subfield of $H_\ord$ where all the $\p_i$ split completely.
\end{corollary}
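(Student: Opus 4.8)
The plan is to run the proof of Theorem~\ref{charact. thm} verbatim with the single prime $\p_r$ replaced by the whole set $T=\set{\p_1,\dots,\p_t}$, and then to re-read the resulting fixed field group-theoretically. First, $\ord=R\cap\ok$ is still an order of $K$, because it contains the nonzero $\ok$-ideal $\tilde\f=\f\cap\ok$ and hence has finite index in $\ok$; as in Lemma~\ref{lemma 3.2} one has $\f_\ord\mid\tilde\f$, and each $\p_i\in T$ is prime to $\tilde\f$ by Proposition~\ref{basic prop. of number ring}~(4), hence prime to $\f_\ord$ and unramified in $H_\ord$. Lemma~\ref{lemma 3.2}, which was proved for an arbitrary finite $T$, then gives the exact sequence
\[
\prod_{i=1}^{t}\p_i^{\zz}\longrightarrow\Pic(\ord)\xrightarrow{\ \Phi\ }\Pic(R)\longrightarrow 1,
\]
so that $\Phi$ induces an isomorphism $\Pic(\ord)\big/\langle\p_1,\dots,\p_t\rangle\xrightarrow{\sim}\Pic(R)$, where $\langle\p_1,\dots,\p_t\rangle$ denotes the subgroup of $\Pic(\ord)$ generated by the classes of the $\p_i$.

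Next I would transport this through the Artin isomorphism $\varphi^{\f_\ord}_{H_\ord/K}\colon J^{\f_\ord}_K/P^{\f_\ord}_{K,\ord}\xrightarrow{\sim}\Gal(H_\ord/K)$ attached to the ring class field of $\ord$. The class of $\p_i$ is sent to the Frobenius $\varphi^{\f_\ord}_{H_\ord/K}(\p_i)$, which generates the decomposition group $D_{\p_i}$ of $\p_i$ in $\Gal(H_\ord/K)$; hence $\langle\p_1,\dots,\p_t\rangle$ corresponds to the subgroup $D:=\langle D_{\p_1},\dots,D_{\p_t}\rangle$, and $\Pic(R)\cong\Gal(H_\ord/K)/D$. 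Just as in the proof of Theorem~\ref{charact. thm}, $P^{\f_\ord}_{K,\ord}\cdot\langle\p_1,\dots,\p_t\rangle$ is a congruence subgroup for $\f_\ord$, so it cuts out a subfield of $H_\ord$; by uniqueness of the ring class field that subfield is $H_R$, whence $H_R=H_\ord^{D}$.

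Finally I would identify $H_\ord^{D}$ with the asserted maximal subfield. Since $H_\ord/K$ is abelian, every intermediate field has the form $L=H_\ord^{G}$ with $\Gal(L/K)=\Gal(H_\ord/K)/G$; for the unramified prime $\p_i$, splitting completely in $L$ is equivalent to the image of $D_{\p_i}$ in $\Gal(H_\ord/K)/G$ being trivial, i.e.\ to $D_{\p_i}\subseteq G$. Thus all of $\p_1,\dots,\p_t$ split completely in $L$ if and only if $D\subseteq G$, and the largest such $L$ is the one with $G=D$, namely $H_\ord^{D}=H_R$. Beyond Lemma~\ref{lemma 3.2} itself there is no real obstacle; the only points needing attention are the compatibility of the modulus $\tilde\f\m_0$ defining $H_R$ with the modulus $\f_\ord$ defining $H_\ord$ (so that both fields live inside one abelian extension of $K$ and the Artin maps match up), and the elementary fact that the subgroup generated by a family of decomposition groups has, as its fixed field, precisely the maximal subfield in which the corresponding primes all split completely.
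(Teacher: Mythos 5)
Your proposal is correct and follows essentially the same route as the paper: the paper likewise obtains the corollary by noting that Lemma~3.2 already holds for general finite $T$, rerunning the argument of Theorem~3.1 to identify $H_R$ with the fixed field of the subgroup generated by the decomposition groups $D_{\p_1},\dots,D_{\p_t}$, and then reinterpreting that fixed field as the maximal subfield of $H_\ord$ in which all the $\p_i$ split completely. Your write-up merely makes explicit a few points the paper leaves tacit (that each $\p_i$ is prime to $\f_\ord$, and the change-of-modulus compatibility), which is fine.
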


When $K$ is an imaginary quadratic field, thanks to the complex multiplication (CM) theory \cite{cox}, the ring class field $H_\ord$ defined by an order $\ord$ of $K$ can be generated by the singular moduli $j(\ord)$, i.e. $H_\ord=K(j(\ord))$. In this case, by using Theorem \ref{charact. thm}, together with one of the main theorem of CM \cite[Theorem 11.36]{cox} which says the Galois action of Artin symbol $\varphi_{H_\ord/K}^{\f_\ord}(\p)$ on $j(\ord)$ is given by $j(\p^{-1})$, we can obtain

\begin{corollary}
Let $K$ be an imaginary quadratic field, $R$ be a number ring in $K$ and $\ord_{K,T}$ be its integral closure in $K$ where $T=\set{\p_1,\cdots,\p_t}$. Set $H_\ord, H_R$ to be the ring class fields of $\ord, R$ respectively, and $f_i$ to be the residue class degree of $\p_i$ in $H_\ord$.  Then $H_R=\bigcap_{i=1}^t K\Big(\sum_{k=1}^{f_i}j(\p_i^{-k})\Big)$.
\end{corollary}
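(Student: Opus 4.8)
Throughout, $\ord$ denotes the order $R\cap\ok$ (as in Theorem~\ref{charact. thm}) and $\f_\ord$ its conductor. The plan is to reduce the statement to the case $t=1$ by means of the preceding corollary, and then to pin down a single fixed field inside $H_\ord$ by a degree count using the explicit Galois action of complex multiplication. Each $\p_i$ belongs to $T$, hence is prime to $\tilde\f$ (Proposition~\ref{basic prop. of number ring}(4)) and therefore to $\f_\ord$, since $\f_\ord\mid\tilde\f$ (shown in the proof of Lemma~\ref{lemma 3.2}); thus $\p_i$ is unramified in $H_\ord$, and as $H_\ord/K$ is abelian the decomposition group $D_{\p_i}\subseteq\Gal(H_\ord/K)$ is cyclic of order $f_i$, generated by the Frobenius $\varphi_i:=\varphi^{\f_\ord}_{H_\ord/K}(\p_i)$. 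Put $L_i:=H_\ord^{D_{\p_i}}$. A subfield $F\subseteq H_\ord$ satisfies $F\subseteq L_i$ if and only if $\p_i$ splits completely in $F$; hence the preceding corollary gives $H_R=\bigcap_{i=1}^{t}L_i$, and it suffices to show $L_i=K\bigl(\sum_{k=1}^{f_i}j(\p_i^{-k})\bigr)$ for each $i$.

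By the theory of complex multiplication, $H_\ord=K(j(\ord))$, the $K$-conjugates of $j(\ord)$ are the pairwise distinct singular moduli $j(\aaa)$, $[\aaa]\in\Pic(\ord)$, and under $\Gal(H_\ord/K)\cong\Pic(\ord)$ one has $\sigma_{[\mathfrak c]}(j(\aaa))=j(\mathfrak c^{-1}\aaa)$; by \cite[Theorem~11.36]{cox}, $\varphi_i$ acts on $j(\ord)$ as $j(\p_i^{-1})$, and iterating (via the displayed action) gives $\varphi_i^{\,k}(j(\ord))=j(\p_i^{-k})$. Since $[\p_i]$ has order $f_i$ in $\Pic(\ord)$, the $D_{\p_i}$-orbit of $j(\ord)$ consists of the $f_i$ distinct values $j(\p_i^{-k})$, $0\leq k<f_i$, and $\theta_i:=\sum_{k=1}^{f_i}j(\p_i^{-k})=\sum_{k=0}^{f_i-1}\varphi_i^{\,k}(j(\ord))=\mathrm{Tr}_{H_\ord/L_i}(j(\ord))$ is fixed by $\varphi_i$, so $\theta_i\in L_i$ and $K(\theta_i)\subseteq L_i$. (For contrast, the elementary symmetric functions $e_1=\theta_i,e_2,\dots,e_{f_i}$ of this orbit are the coefficients of the minimal polynomial of $j(\ord)$ over $L_i$, and a routine degree count already yields $L_i=K(e_1,\dots,e_{f_i})$; the real content of the corollary is that the single element $e_1=\theta_i$ generates $L_i$.)

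For the reverse inclusion I would count $K$-conjugates of $\theta_i$. For $\sigma_{[\mathfrak c]}\in\Gal(H_\ord/K)$ one computes $\sigma_{[\mathfrak c]}(\theta_i)=\sum_{k=0}^{f_i-1}j(\mathfrak c^{-1}\p_i^{-k})$, which depends only on the coset $[\mathfrak c]\langle[\p_i]\rangle$ in $\Pic(\ord)$; hence $[K(\theta_i):K]\leq[\Pic(\ord):\langle[\p_i]\rangle]=[L_i:K]$. Together with $K(\theta_i)\subseteq L_i$ this forces $K(\theta_i)=L_i$ \emph{provided} the conjugates $\sigma_{[\mathfrak c]}(\theta_i)$ for distinct cosets are distinct, i.e.\ provided $\mathrm{Stab}_{\Pic(\ord)}(\theta_i)=\langle[\p_i]\rangle$. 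Granting this for every $i$, we obtain $\bigcap_{i=1}^{t}K(\theta_i)=\bigcap_{i=1}^{t}L_i=H_R$, which is the claim.

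The crux, and the main obstacle, is exactly this distinctness: two disjoint $\langle[\p_i]\rangle$-orbits of singular moduli of $\ord$ never have the same sum. Distinctness of the individual values $j(\aaa)$ is not enough for such an additive statement --- in the cyclotomic analogue $\zeta_8+\zeta_8^{5}=0$, so the orbit sum over the order-$2$ subgroup $\langle\sigma_5\rangle$ of $\Gal(\Q(\zeta_8)/\Q)$ vanishes and fails to generate the fixed field $\Q(i)$. One therefore needs a genuine property of singular moduli: it would suffice to know that $j(\ord)$ generates a normal basis of $H_\ord/K$ (so that block sums over distinct cosets are automatically distinct), but this is not automatic; alternatively one can try to use the Archimedean estimate $\log|j(\aaa)|=\pi\sqrt{|\mathrm{disc}\,\ord|}/a+O(1)$ for a class $\aaa$ represented by a reduced binary quadratic form of leading coefficient $a$, so that each orbit sum is dominated by its unique term of smallest leading coefficient, with distinct cosets carrying distinct dominant classes. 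Turning this dominance into a separation of \emph{all} cosets at once is the delicate step where the substance of the proof must be supplied.
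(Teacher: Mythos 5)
Your reduction is exactly the one the paper intends: by the preceding corollary $H_R=\bigcap_i L_i$ with $L_i=H_\ord^{D_{\p_i}}$, and the whole content of the statement is that each trace $\theta_i=\mathrm{Tr}_{H_\ord/L_i}(j(\ord))=\sum_{k=1}^{f_i}j(\p_i^{-k})$ generates $L_i$ over $K$. The paper offers no argument for this last step --- the corollary is asserted to follow from Theorem~\ref{charact. thm} and \cite[Theorem~11.36]{cox} with nothing further --- so the gap you flag is a genuine gap in the paper, not something you overlooked. Your inclusion $K(\theta_i)\subseteq L_i$, your identification of the missing ingredient as $\mathrm{Stab}_{\Pic(\ord)}(\theta_i)=\langle[\p_i]\rangle$, and the cyclotomic example $\zeta_8+\zeta_8^5=0$ showing that distinctness of the individual conjugates is not enough, are all correct and to the point.

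Two remarks on closing the gap. First, it is easier than your last paragraph suggests: since $K(\theta_i)=H_\ord^{\mathrm{Stab}(\theta_i)}$, you do not need all coset sums to be pairwise distinct, only that no coset of $\langle[\p_i]\rangle$ other than the identity coset has the same sum as the identity coset. Second, the identity coset is the unique one containing the principal class, and the dominance estimate you mention separates it from all the others at once: writing $d=\mathrm{disc}(\ord)$, the principal class gives $j(\ord)=j(\tau_0)$ with $\mathrm{Im}\,\tau_0=\sqrt{|d|}/2$, so $j(\ord)\geq e^{\pi\sqrt{|d|}}-C$, while every non-principal class is represented by a reduced form of leading coefficient $a\geq 2$ and hence satisfies $|j(\aaa)|\leq e^{\pi\sqrt{|d|}/2}+C$ (one may take $C=2079$). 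Therefore $\theta_i\geq e^{\pi\sqrt{|d|}}-f_i\bigl(e^{\pi\sqrt{|d|}/2}+C\bigr)$, whereas $|\sigma_{[\mathfrak{c}]}(\theta_i)|\leq f_i\bigl(e^{\pi\sqrt{|d|}/2}+C\bigr)$ for $[\mathfrak{c}]\notin\langle[\p_i]\rangle$; these are incompatible as soon as $e^{\pi\sqrt{|d|}/2}$ exceeds roughly $2f_i$, which holds for every discriminant outside a short explicit list (where $\Pic(\ord)$ is tiny and the claim is checked directly). So your plan does complete to a proof, but only with this analytic input on singular moduli, which the paper nowhere supplies; as written, both your argument and the paper's stop at the same unproved assertion.
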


\section{Idelic Correspondence}\label{idelic-cor}

For a number field $K$, denote by $\mathbb{I}_K$ (resp. $\mathbb{A}_K$) the idele group (resp. adele ring) of $K$, $K^{ab}$ the maximal abelian extension of $K$. $C_K:=\mathbb I_K/K^*$ is the idele class group of $K$. The idele-theoretic formulation of class field theory states that, there is a unique continuous homomorphism, called the global norm residue symbol,
$$\psi_K: C_K \ra \Gal(K^{ab}/K)$$
such that the correspondence $U\mapsto \psi_K^{-1}(U)$ gives a bijection between open subgroups of $\Gal(K^{ab}/K)$ and open (hence of finite index) subgroups of $C_K$; moreover, for every finite abelian extension $L/K$, $\psi_K$ induces the isomorphism $C_K/N_{L/K}C_L\cong \Gal(L/K)$.

In this section, we give an explicit description of the open subgroup of $C_K$ corresponding to the number ring class field we defined in Section $\S$\ref{theory}.

Let $\Omega_K$ be the set of all places in $K$, $\infty_K$ the set of all infinite places in $K$.
Let $K_\p$ be the completion of $K$ at $\p$, $\oo_{K_\p}$ (or $\zz_p$ when $K=\Q$) be the valuation ring of $K_\p$ and $\m_\p$ be the maximal ideal of $\oo_{K_\p}$ for every $\p\in \Omega_K\setminus\infty_K$. We also write $\oo_{K_\p}=K_\p$ for $\p\in \infty_K$. Denote by $U_\p^i$ the multiplicative group $1+\m_\p^i$ for $i\geq 1$ and $U^0_\p=\oo_{K_\p}^*$. We write $A_{(\mathfrak{q})}$ for the localization of the ring $A$ at a prime ideal $\mathfrak{q}$ of $A$.

Let all notation be as in Section $\S$\ref{theory} and Section $\S$\ref{characterization}, e.g. $R$, $\so=\oo_{K,T}$, $\f$. Let $\ord=R\cap\ok$ and $\f_\ord$ be its conductor.

For proving the following Theorem \ref{idele-rcf} we need the lemma:
\begin{lemma}\label{ideletomodulus}\cite[Lemma 3.4]{Cohen_ccft}
For every modulus $\m=\prod_{\p\in\Omega_K}\p^{n_\p}$ of $K$, let $W_\m=\prod_{\p} U_\p^{n_\p}$. If $\p$ is real, set $U^1_\p=\mathbb{R}^*_{>0}$ . There is an isomorphism
\begin{equation*}
 \iota: \mathbb{I}_K/K^*W_\m\xrightarrow{\sim}Cl_\m
\end{equation*}
that maps $(x_\p)_\p$ to the class of $\prod_{\p\in\Omega\setminus\infty_K}\p^{\emph{ord}_\p(yx_\p)}$. Here $y\in K^*$ is a global element satisfying $yx_\p\in U_\p^{n_\p}$ for all $\p\mid\m$.
\end{lemma}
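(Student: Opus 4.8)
The plan is to run the classical argument identifying the ray class group with a quotient of the idele group. First I would construct a homomorphism $\iota:\mathbb{I}_K\to Cl_\m$ directly: given $x=(x_\p)_\p$, weak approximation produces some $y\in K^*$ with $yx_\p\in U_\p^{n_\p}$ for the finitely many $\p\mid\m$, and I set $\iota(x)$ to be the class of $\mathfrak{a}_{x,y}:=\prod_{\p\nmid\infty}\p^{\mathrm{ord}_\p(yx_\p)}$. This product is finite since $\mathrm{ord}_\p(x_\p)=0$ for almost all $\p$, and $\mathfrak{a}_{x,y}$ is prime to $\m_0$ because $yx_\p\in U_\p^{n_\p}$ forces $\mathrm{ord}_\p(yx_\p)=0$ at every $\p\mid\m_0$; hence $\mathfrak{a}_{x,y}\in J^\m_K$. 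The first thing to verify is independence of the choice of $y$: if $y,y'$ both qualify then $y/y'\in U_\p^{n_\p}$ for all $\p\mid\m$, so $(y/y')\ok\in P^\m_{K,1}$ and $\mathfrak{a}_{x,y},\mathfrak{a}_{x,y'}$ define the same class in $Cl_\m$. Multiplicativity in $x$ is immediate, so $\iota$ is a well-defined homomorphism given by the formula in the statement. Surjectivity is easy: for $\mathfrak{a}=\prod\p^{a_\p}\in J^\m_K$ the idele that is a local uniformizer raised to $a_\p$ at each finite $\p$ and $1$ at the infinite places admits $y=1$ and maps to the class of $\mathfrak{a}$.

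The heart of the proof is the computation $\ker\iota=K^*W_\m$. For $\supseteq$: if $w\in W_\m$ then $y=1$ works and $\mathfrak{a}_{w,1}=(1)$ since $w_\p\in\oo_{K_\p}^*$ at every finite $\p$; if $a\in K^*$ is embedded diagonally, choose $y$ with $ya\in U_\p^{n_\p}$ for $\p\mid\m$, and then $\mathfrak{a}_{(a),y}=(ya)\ok$ lies in $P^\m_{K,1}$. At this point I would invoke the standard reformulation of $P^\m_{K,1}$: although it is defined in Section~\ref{theory} through integral $\alpha$ with $\alpha\equiv1\ (\mmod\m_0)$, it equals the group of principal fractional ideals $(c)\ok$ with $c\in K^*$ a local unit congruent to $1$ modulo $\m_\p^{n_\p}$ at each finite $\p\mid\m_0$ and positive at each real place dividing $\m_\infty$. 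For $\subseteq$: if $x\in\ker\iota$ with witness $y$, then $\mathfrak{a}_{x,y}=(c)\ok$ for some $c$ as above, so $\mathrm{ord}_\p(yx_\p)=\mathrm{ord}_\p(c)$ at every finite $\p$; consequently the idele $z:=(yx_\p/c)_\p$ is a unit at every finite place, lies in $U_\p^{n_\p}$ at every $\p\mid\m$ (both $yx_\p$ and $c$ belong to the group $U_\p^{n_\p}$ there, and are positive at real places dividing $\m_\infty$), and lies trivially in $K_\p^*$ at the remaining infinite places. Hence $z\in W_\m$, and since $x$ differs from $z$ by the diagonal element $c/y\in K^*$, we conclude $x\in K^*W_\m$.

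Passing to the quotient then gives the isomorphism $\mathbb{I}_K/K^*W_\m\xrightarrow{\sim}Cl_\m$ with the asserted formula. The step I expect to be the main obstacle is the archimedean bookkeeping together with the precise content of $P^\m_{K,1}$: one has to check that its integral congruence/positivity description agrees with the local-unit description used above, and that the groups $U_\p^{n_\p}$ — including the conventions $U_\p^1=\R^*_{>0}$ at real places dividing $\m_\infty$ and $U_\p^0=K_\p^*$ at the other infinite places — behave correctly under the quotients appearing in the argument. The rest is a routine manipulation of weak approximation and the local valuation maps.
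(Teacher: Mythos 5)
Your proof is correct: the construction of $\iota$ via weak approximation, the check that the class of $\mathfrak{a}_{x,y}$ is independent of the witness $y$, and the identification $\ker\iota=K^*W_\m$ using the multiplicative-congruence description of $P^\m_{K,1}$ together constitute the standard argument for this identification. The paper itself offers no proof — it quotes the result from \cite[Lemma 3.4]{Cohen_ccft} — and your argument is essentially the one given there, with the archimedean conventions handled correctly.
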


\begin{theorem}\label{idele-rcf}
For a number ring $R$ in $K$, let $H_R$ be the number ring class field defined by $R$. Let $f_\p=\emph{ord}_\p(\f_\ord)$ and $R_p=R\otimes_\zz \zz_p$. Then $\prod_{p\in\Omega_\Q}R_p^*$ is the open subgroup of $\mathbb I_K$ corresponding to the finite abelian extension $H_R/K$. That is, $\mathbb{I}_K/K^*\prod_pR_p^*\cong \Gal(H_R/K)$. Moreover,
\begin{equation}\label{idelic}
  \prod_{p\in\Omega_\Q}R_p^*=\prod_{\p\notin T\cup\infty_K}\Big(\ord/(\ord\cap\p^{f_\p})\Big)^*U_\p^{f_\p}\times\prod_{\p\in T\cup\infty_K}K_\p^*,
\end{equation}
where $\big(\ord/(\ord\cap\p^{f_\p})\big)^*U_\p^{f_\p}=\bigcup_{a\in(\ord/\ord\cap\p^{f_\p})^*}aU_\p^{f_\p}$ is an open subgroup of $\oo_{K_\p}^*$.

\end{theorem}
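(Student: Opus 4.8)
The plan is to treat the two assertions separately: the explicit product description \eqref{idelic} is a purely local matter, while the statement that $K^*\prod_pR_p^*$ is the open subgroup of $\mathbb I_K$ attached to $H_R$ is obtained by transporting the ideal‑theoretic description of $H_R$ of $\S$\ref{theory} across the isomorphism $\iota$ of Lemma~\ref{ideletomodulus}. (Note $K^*\not\subseteq\prod_pR_p^*$, so the precise statement is $K^*\prod_pR_p^*=\psi_K^{-1}(\Gal(K^{ab}/H_R))$, which is what the displayed isomorphism records.)

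\emph{The formula \eqref{idelic}.} Since $\so=\oo_{K,T}$ is a localization of $\ok$, one has $\so\otimes_\zz\zz_p=\prod_{\p\mid p,\,\p\notin T}\oo_{K_\p}\times\prod_{\p\mid p,\,\p\in T}K_\p$, while $\ord\otimes_\zz\zz_p\subseteq\ok\otimes_\zz\zz_p=\prod_{\p\mid p}\oo_{K_\p}$ and $R\otimes_\zz\R=K\otimes_\Q\R=\prod_{\p\in\infty_K}K_\p$. The conductor $\f$ of $R$ is an ideal of $\so$ prime to $\m_0$, so its image in each factor $K_\p$ ($\p\in T$) is the whole ring; hence $R\otimes_\zz\zz_p\supseteq\f\otimes_\zz\zz_p$ contains the idempotent of each such factor, which yields $R_p^*=\prod_{\p\mid p,\,\p\in T}K_\p^*\times(\text{non-}T\ \text{part})$, and together with the archimedean case this produces the factors $\prod_{\p\in T\cup\infty_K}K_\p^*$. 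For the remaining factors one compares $R$ with $\ord=R\cap\ok$: away from $T$ the two rings have the same localizations — every element of $R$ becomes, after adding a suitable element of $\f$, congruent to an element of $\ord$ to arbitrarily high order at $\p\notin T$ — so in particular $\mathrm{ord}_\p(\f)=f_\p=\mathrm{ord}_\p(\f_\ord)$ for $\p\notin T$ (using also $\f_\ord\mid\tilde\f$ and Proposition~\ref{basic prop. of number ring}); a direct computation in $\oo_{K_\p}$ then identifies the corresponding factor of $R_p^*$ with the preimage of $\bigl(\ord/(\ord\cap\p^{f_\p})\bigr)^*\subseteq(\oo_{K_\p}/\m_\p^{f_\p})^*$ under reduction, i.e. with $\bigl(\ord/(\ord\cap\p^{f_\p})\bigr)^*U_\p^{f_\p}$.

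\emph{The correspondence.} Apply Lemma~\ref{ideletomodulus} with $\m=\tilde\f\m_0$, so $W_{\tilde\f\m_0}=\prod_\p U_\p^{n_\p}$ with $n_\p=\mathrm{ord}_\p(\tilde\f\m_0)$ and $\iota\colon\mathbb I_K/K^*W_{\tilde\f\m_0}\xrightarrow{\sim}\Cl^{\tilde\f\m_0}_K$. First, $\prod_pR_p^*\supseteq W_{\tilde\f\m_0}$: this is clear at $\p\in T$ (there $R_\p^*=K_\p^*$) and at $\infty_K$, and at finite $\p\notin T$ it follows from $\tilde\f\subseteq R$, whence $\m_\p^{n_\p}\subseteq R\otimes_\zz\zz_p$ and $U_\p^{n_\p}\subseteq R_\p^*$. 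Thus $K^*\prod_pR_p^*$ is a union of $K^*W_{\tilde\f\m_0}$‑cosets, hence corresponds via $\iota$ to a congruence subgroup for $\tilde\f\m_0$, which I claim is $P^{\tilde\f\m_0}_{K,R}$. For the inclusion into it, given $x=(x_\p)\in\prod_pR_p^*$ the finiteness of $T$ lets us approximate $x$ — at all $\p\mid\tilde\f\m_0$ and wherever $x$ fails to be integral — by one $\alpha\in R$ with $\alpha R+\f=R$ (density of $R$ in $\prod_{\p\in S}R_p$ for finite $S$, and of $R$ in $K_\p$ for $\p\in T$); unwinding $\iota$ and using the identity $\alpha\so\cap\ok=\prod_{\p\nmid\tilde\f\m_0}\p^{\mathrm{ord}_\p(\alpha)}$ (from $\alpha\in\so$, $\mathrm{ord}_\p(\alpha)=0$ for $\p\mid\f$, and Proposition~\ref{basic prop. of number ring}(4)) gives $\iota(x)=[\alpha\so\cap\ok]^{\pm1}$, which lies in $P^{\tilde\f\m_0}_{K,R}/P^{\tilde\f\m_0}_{K,1}$ by the description of its generators $\check P^{\tilde\f\m_0}_{K,R}$ in Theorem~\ref{thm_congruencegp}. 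Conversely, a generator $\alpha\so\cap\ok$ ($\alpha\in R$, $\alpha R+\f=R$) is hit by the idele equal to $\alpha$ at every $\p\mid\tilde\f\m_0$ and to $1$ elsewhere, which lies in $\prod_pR_p^*$ and maps under $\iota$ to $[\alpha\so\cap\ok]^{-1}$. Hence $\mathbb I_K/K^*\prod_pR_p^*\cong\Cl^{\tilde\f\m_0}_K\big/\bigl(P^{\tilde\f\m_0}_{K,R}/P^{\tilde\f\m_0}_{K,1}\bigr)=J^{\tilde\f\m_0}_K/P^{\tilde\f\m_0}_{K,R}\cong\Gal(H_R/K)$ by the Theorem and Definition of $\S$\ref{theory}, and the same matching shows $K^*\prod_pR_p^*$ is the norm subgroup of $H_R$. (Once \eqref{idelic} is known, one may alternatively deduce this from the order case $T=\varnothing$ and the Corollary after Theorem~\ref{charact. thm}: $H_R$ is the largest subfield of $H_\ord$ in which every $\p\in T$ splits completely, so its norm subgroup is the smallest one containing $K^*\prod_p\ord_p^*$ and all $K_\p^*$ with $\p\in T$, and by \eqref{idelic} this is $K^*\prod_pR_p^*$.)

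\emph{Main obstacle.} I expect the delicate part to be the local analysis at the bad primes $\p\mid\tilde\f$ with $\p\notin T$: verifying there that $R_\p^*$ agrees with the order's local unit group, and, in the computation of $\iota$, that the congruence conditions cut out by $R_\p^*$ are exactly membership in $P^{\tilde\f\m_0}_{K,R}$. A related point needing care is the bookkeeping over primes: when a rational prime $p$ has primes both inside and outside $T$ above it, or when several primes of $\ok$ lie above one non‑invertible prime of $R$, the ring $R\otimes_\zz\zz_p$ need not literally split as a product indexed by the $\p\mid p$, and one must argue through the idempotents furnished by $\f$ and interpret the right-hand product of \eqref{idelic} accordingly. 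The approximation steps also use the standing assumption that $T$ is finite in an essential way.
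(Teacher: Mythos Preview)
Your proposal is correct, and in fact your parenthetical ``alternative'' at the end is exactly the route the paper takes, while your primary argument for the correspondence is a genuinely different (more direct) one.

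For the formula~\eqref{idelic} the paper proceeds much as you do, but is more explicit: it shows that $C_\p:=\varphi_\p(R\otimes_\zz\zz_p)$ equals the closure $\bar\ord$ of $\ord$ in $K_\p$ for $\p\notin T$ (using Proposition~\ref{local.ord.}, which gives $R=\ord[1/\epsilon]$ and hence $1/\epsilon\in\ord_{(\p\cap\ord)}\subseteq\bar\ord$), and that $C_\p=K_\p$ for $\p\in T$; the identification $\bar\ord^*=\bigl(\ord/(\ord\cap\p^{f_\p})\bigr)^*U_\p^{f_\p}$ is then obtained from the snake diagram with rows $1\to U_\p^{f_\p}\to\bar\ord^*\to(\bar\ord/\m_\p^{f_\p})^*\to1$ and $1\to U_\p^{f_\p}\to\oo_{K_\p}^*\to(\oo_{K_\p}/\m_\p^{f_\p})^*\to1$, together with $\bar\ord/\m_\p^{f_\p}\cong\ord/(\ord\cap\p^{f_\p})$. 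This replaces your idempotent argument and handles cleanly the ``bookkeeping'' worry you raised about primes $p$ with factors both inside and outside $T$.

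For the correspondence the paper does \emph{not} apply Lemma~\ref{ideletomodulus} with $\m=\tilde\f\m_0$ as you propose. Instead it first settles the order case with modulus $\f_\ord$, verifying directly that $\iota(K^*\prod_p\ord_p^*)=P^{\f_\ord}_{K,\ord}$ (the approximation here is just CRT in $(\ord/\f_\ord)^*$), and then passes to general $R$ via the identity $\prod_pR_p^*=\prod_p\ord_p^*\cdot\langle\theta_\p\rangle_{\p\in T}$ (immediate from~\eqref{idelic}) together with Lemma~\ref{lemma 3.2}, which gives $\Pic(R)\cong\Pic(\ord)/\langle[\p]\rangle_{\p\in T}$. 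Your direct approach has the advantage of hitting $P^{\tilde\f\m_0}_{K,R}$ without the detour through Section~\ref{characterization}, but its approximation step---finding a single $\alpha\in R$ with $\alpha R+\f=R$ matching $(x_\p)$ simultaneously at the $\tilde\f$-primes and at the $T$-primes (where $x_\p\in K_\p^*$ is unconstrained)---is less routine than the order-case CRT and would need a short argument of its own (e.g.\ via $R=\ord[1/\epsilon]$ and CRT in $\ord$ modulo $\f_\ord\cdot\prod_{\p\in T}(\p\cap\ord)^N$). The paper's route trades this for the already-proved Lemma~\ref{lemma 3.2}.
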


\begin{proof}
For any rational prime $p$, there is a natural $\Q_p$-algebra isomorphism \cite{cassel-frolich}
$$K\otimes_\Q\Q_p\xrightarrow[\simeq]{\varphi}\prod_{\p\mid p\ok}K_{\p},$$
which is also a homeomorphism. For any $\p\mid p\ok$, we compose it with the projection to $K_\p$, and denote it by $\varphi_\p: K\otimes_\Q\Q_p\twoheadrightarrow K_\p$. Note that it is a closed (and open) map. And the composition $(\mbox{id}\times 1)\comp\varphi_\p:K\hookrightarrow K\otimes_\Q\Q_p\twoheadrightarrow K_\p$ is an embedding of $K$ to $K_\p$. We denote it by $i_\p$. It is also well-known that $\varphi_\p(\ok\otimes_\zz\zz_p)=\oo_{K_\p}$, since the image $\varphi_\p(\ok\otimes_\zz\zz_p)\subseteq \oo_{K_\p}$ is closed in $K_\p$ and contains $\ok$ which is dense in $\oo_{K_\p}$.

First we prove this theorem in order case, i.e. $R=\ord$. In this case $T=\emptyset$ and we are reduced to the following
\begin{equation}\label{idel.ord.case}
   \prod_{p\in\Omega_\Q}\ord_p^*=\prod_{\p\in\Omega_K\setminus\infty_K}\Big(\ord/(\ord\cap\p^{f_\p})\Big)^*U_\p^{f_\p}\times\prod_{\p\in \infty_K}K_\p^*.
\end{equation}
For $\p\in\infty_K$, since $K=\Q\cdot\ord$ then $(\ord\otimes_\zz\mathbb{R})^*=(K\otimes_\Q\mathbb{R})^*=\prod_{\p\in \infty_K}K_\p^*$. For $\p\in\Omega_K\setminus\infty_K$, assume $\p\cap\zz=p\zz$. First we claim that $C_\p:=\varphi_\p(\ord\otimes_\zz\zz_p)$ is the closure $\bar\ord$ of $\ord$ in $K_\p$ (under the embedding $i_\p$ which we will omit if there is no confusion).

Since $\ord\otimes_\zz\zz_p$ is closed in $K\otimes_\Q\Q_p$ so $C_\p$ is closed in $K_\p$, which implies $C_\p\supseteq\bar\ord$. On the other hand, for any two elements $a_1, a_2\in\bar\ord$, and for any integer $l$, there exist positive integers $n_1, n_2$ such that $a_1a_2+\m_\p^l$ contains $(a_1+\m_\p^{n_1})(a_2+\m_\p^{n_2})$. Since $(a+\m_\p^{n_1})\cap\ord\neq\emptyset$ for $i=1,2$, it follows that $(a_1a_2+\m_\p^l)\cap\ord\neq\emptyset$, which means that $a_1a_2\in\bar\ord$. It is clear that $\bar\ord$ contains $\zz_p$ and $\ord$. Then $\bar\ord$ contains $\zz_p\cdot\ord=C_\p$. Therefore $C_\p=\bar\ord$ in $K_\p$.

Since $\f_\ord\ok\subseteq\ord$, then $\varphi_\p(\f\ok\otimes_\zz\zz_\p)\subseteq\varphi_\p(\ord\otimes_\zz\zz_p)=C_\p$ where the L.H.S. is equal to $\f\oo_{K_\p}=\m_\p^{f_\p}$ in $K_\p$. Note that when $f_\p=0$ it means that $C_\p=\oo_{K_\p}$. Hence $U^{f_\p}_\p$ is contained in $C_\p^*$. Then we have the following commutative diagram with exact rows:
\begin{equation*}
\xymatrix @-0.5pc {
&1\ar[r] & U^{f_\p}_\p \ar[r]\ar@{=}[d] & C_\p^*=\bar\ord^* \ar[r]\ar@{_{(}->}[d] & (\bar\ord/\m_\p^{f_\p})^* \ar[r]\ar@{_{(}->}[d] &1\\
&1\ar[r] & U^{f_\p}_\p \ar[r] & \oo_{K_\p}^* \ar[r] & (\oo_{K_\p}/\m_\p^{f_\p})^* \ar[r] &1
}
\end{equation*}
It is well-known that $\oo_{K_\p}/\m_\p^{f_\p}\cong\ok/\p^{f_\p}$ following \cite[Proposition 2.7]{janusz}. For orders, there is a natural monomorphism $\ord/\ord\cap\p^{f_\p}\hookrightarrow\bar\ord/\m_\p^{f_\p}$ since $\m_\p^{f_\p}\cap\ord=(\m_\p^{f_\p}\cap\ok)\cap\ord=\p^{f_\p}\cap\ord$. Here we should notice that it may happen $\p^{f_\p}\cap\ord\neq(\p\cap\ord)^{f_\p}$ when $\p\mid\f_\ord$. And for any $x\in\bar\ord$, there must be some $a\in\ord$ such that $x\equiv a\mbox{ mod }\m_\p^{f_\p}$ since $\ord$ is dense in $\bar\ord$. Therefore  $\ord/\ord\cap\p^{f_\p}\cong\bar\ord/\m_\p^{f_\p}$ and we obtain
$$C_\p^*=\bigcup_{a\in(\ord/\ord\cap\p^{f_\p})^*}aU_\p^{f_\p}.$$
Together with $\ord_p^*=(\ord\otimes_\zz\zz_p)^*\cong\prod_{\p\mid p\zz}C_\p^*$ then we obtain (\ref{idel.ord.case}). Note that $C^*_\p=\oo_{K_\p}^*$ for all $\p\nmid\f_\ord$.

It is now obvious that $\prod_p\ord_p^*$ is an open subgroup of $\mathbb{I}_K$ and contains $W_{\f_\ord}$ described in Lemma \ref{ideletomodulus}. Next we shall show that
\begin{equation}\label{idelePic}
  \mathbb{I}_K/K^*\prod_{p\in\Omega_\Q}\ord_p^*\cong\Pic(\ord).
\end{equation}
Recall that $\Pic(\ord)\cong J^{\f_\ord}_K/P^{\f_\ord}_{K,\ord}$ where $P^{\f_\ord}_{K,\ord}=\set{(\alpha_1/\alpha_2)\ok|\alpha_i\in\ord, \alpha_i\ord+\f_\ord=\ord\mbox{ for }i=1,2}$. Replace the modulus $\m$ in Lemma \ref{ideletomodulus} by $\f_\ord$; for proving (\ref{idelePic}) it remains to verify that $\iota(K^*\prod_p\ord_p^*)=P_{K,\ord}^{\f_\ord}$.
First we should notice the fact: by Chinese Remainder Theorem $(\ok/\f_\ord)^*\cong\prod_{\p\mid\f_\ord}(\ok/\p^{f_\p})^*$. Under this isomorphism we have $(\ord/\f_\ord)^*\cong\prod_{\p\mid\f_\ord}(\ord/\ord\cap\p^{f_\p})^*$.

It is clear that $\iota(K^*)\subseteq P^{\f_\ord}_{K,1}\subseteq P^{\f_\ord}_{K,\ord}$. Take any $(x_\p)_\p\in\prod_p\ord_p^*$. Following (\ref{idel.ord.case}) for every $\p\mid\f_\ord$ there exists an $a_\p\in(\ord/\ord\cap\p^{f_\p})^*$ such that $x_\p\equiv a_\p\mbox{ mod }\m_\p^{f_\p}$, or equivalently $x_\p a_\p^{-1}\in U_p^{f_\p}$ for some $a_\p^{-1}\in(\ord/\ord\cap\p^{f_\p})^*$. Then there exists a $y\in(\ord/\f_\ord)^*$ i.e $y\in\ord$ and $y\ord+\f_\ord=\ord$, such that $y\equiv a_\p^{-1}\mbox{ mod }(\ord\cap\p^{f_\p})$ for all $\p\mid\f_\ord$, which implies that $yx_\p\in U_\f^{f_\p}$ for all $\p\mid\f_\ord$. Hence $\iota((x_\p)_\p)=\prod_{\p\in\Omega_K\setminus\infty_K}\p^{\text{ord}_\p(yx_\p)}=y\ok\in P^{\f_\ord}_{K,\ord}$ since $\text{ord}_\p(x_\p)=0$ for all finite $\p$, which shows that $\iota(K^*\prod_p\ord_p^*)\subseteq P_{K,\ord}^{\f_\ord}$. If we take an idele $(x_\p)_\p$ such that its image of $\iota$ belongs to $P^{\f_\ord}_{K,\ord}$, it means there is a $y\in K^*$ and two $\alpha_1,\alpha_2\in\ord$ with $\alpha_i\ord+\f=\ord$, such that $yx_\p\in U_\p^{f_\p}$ for all $\p\mid\f_\ord$ and $\text{ord}_\p(yx_\p)=\text{ord}_\p(\alpha_1/\alpha_2)$ for all finite $\p$, of which the latter implies that $\text{ord}_\p((\alpha_2/\alpha_1)yx_\p)=0$ for all finite $\p$. Hence $(\alpha_2/\alpha_1)yx_\p\in U^0_\p=\oo_{K_\p}^*$ for all finite $\p$. Moreover, for $\p\mid\f_\ord$, since $yx_p\in U_\p^{f_\p}$ and the residue class of $\alpha_2/\alpha_1$ belongs to $(\ord/\ord\cap\p^{f_\p})^*$ so $(\alpha_2/\alpha_1)yx_\p\in C_\p^*$. Therefore $(\alpha_2y/\alpha_1)(x_\p)_\p\in\prod_p\ord_p^*$ which shows that $\iota(K^*\prod_p\ord_p^*)\supseteq P_{K,\ord}^{\f_\ord}$. Hence we obtain (\ref{idelePic}), and following the idele-theoretic class field theory we know that the open subgroup $\prod_{p\in\Omega_\Q}\ord_p^*$ of $\mathbb{I}_K$ corresponds to the ring class field $H_\ord$ of $\ord$.

Next we prove this theorem in the general number ring case. For $\p\in\infty_K$, as in order case $(R\otimes_\zz\mathbb{R})^*=\prod_{\p\in\infty_K}K_\p^*$. For $\p\in\Omega_K\setminus\infty_K$, assume $\p\cap\zz=p\zz$. We use the same symbol $C_\p$ to denote $\varphi_\p(R\otimes_\zz\zz_p)$ as in the order case. When $\p\in T$, we claim that $C_\p^*=K_\p^*$. Recall that $\ord=R\cap\ok$ and $\p$ is prime to $\f_\ord$, and it has been proved in order case that $\varphi_\p(\ord\otimes_\zz\zz_p)=\oo_{K_\p}$. Then $\oo_{K_\p}$ is contained in $C_\p$. On the other hand, there exists an $\epsilon\in R\subseteq\ C_\p$ such that $\text{ord}_\p(\epsilon)<0$. Hence $C_\p=K_\p$. When $\p\notin T$, we claim that $C_\p=\varphi_\p(\ord\otimes_\zz\zz_p)$.

Recall that $\varphi_\p(\ord\otimes_\zz\zz_p)$ is exactly the closure $\bar\ord$ of $\ord$ in $K_\p$. Denote by $\tilde\p$ the prime ideal $\p\cap\ord$ of $\ord$.  We claim that the localization $\ord_{(\tilde\p)}$ of $\ord$ at $\tilde\p$ is contained in $\bar\ord$. Take any $1/s\in\ord_{(\tilde\p)}$ where $s\in\ord\setminus\tilde\p$. For any (positive) integer $l$, since $s\ord+\tilde\p^l=\ord$ (the maximal ideal in order $\ord$ that contains $\tilde\p^l$ is $\tilde\p$), so there exists an $a\in\ord$ such that $s\cdot a\equiv 1\text{ mod }\m_\p^l$. Then $1/s\equiv a\text{ mod }\m_\p^l$ since $s\notin \m_\p$, which implies $(1/s+\m_\p^l)\cap\ord\neq\emptyset$. Therefore $\ord_{(\tilde\p)}\subseteq\varphi_\p(\ord\otimes_\zz\zz_p)$. By Proposition \ref{local.ord.} in next section it follows that $R=\ord[\frac{1}{\epsilon}]$ for some $T$-unit $\epsilon\in\ord$. And $\epsilon\notin\p$ since $\p\notin T$. Then $\epsilon\in\ord\setminus\tilde\p$. It implies that $\frac{1}{\epsilon}\in\ord_{(\tilde\p)}\subseteq\varphi_\p(\ord\otimes_\zz\zz_p)$. Hence $\varphi_\p(R\otimes_\zz\zz_p)\subseteq\varphi_\p(\ord\otimes_\zz\zz_p)$ and then $C_\p=\varphi_\p(\ord\otimes_\zz\zz_p)$.

As a result, together with (\ref{idel.ord.case}) we can obtain (\ref{idelic}). Furthermore, as for any $\p\in T$, $\iota^{-1}(\p)=(1)_{\mathfrak{q}\neq\p}\times \tau_\p\in\mathbb{I}_K$, denoted by $\theta_\p$, where $\tau_\p\in K_\p^*$ is an uniformizer for $\p$. It follows that $\prod_{p\in\Omega_\Q}R^*_p=\prod_{p\in\Omega_\Q}\ord_p^*\cdot\langle\theta_\p\rangle_{\p\in T}$. By Lemma \ref{lemma 3.2} we know that $\Pic(R)\cong\Pic(\ord)/\prod_{\p\in T}\p^\zz$.  Therefore, the $\iota$ in Lemma \ref{ideletomodulus} induces an isomorphism
\begin{equation*}
  \mathbb{I}_K/K^*\prod_{p\in\Omega_\Q}R^*_p\cong\Pic(R).
\end{equation*}
Following the idele-theoretic class field theory, the open subgroup $\prod_{p\in\Omega_\Q}R_p^*$ of $\mathbb{I}_K$ corresponds to the number ring class field $H_R$ of $R$. This completes our proof.\end{proof}

\section{An Application to Norm Form Equations}\label{application}

Before studying the application, we investigate further the structure of number rings in $\Q$ and in $K$.

The smallest number ring in $\Q$ is $\zz$. When talk about a number ring $R$ in $\Q$, we mean it to be a finitely generated $\zz$-algebra in $\Q$ (under the \textbf{Assumption} in Section $\S$\ref{basic-notation-and-results}), say $\zz[\frac{b_1}{a_1},\cdots,\frac{b_t}{a_t}]$ where $a_i$, $b_i$ are coprime integers. First, $\zz[\frac{b_i}{a_i}]=\zz[\frac{1}{a_i}]$ since $a_i$ and $b_i$ are coprime; second, $\zz[\frac{1}{a_i},\frac{1}{a_j}]=\zz[\frac{1}{a_i a_j}]$. Hence $R$ has the form $\zz[\frac{1}{a}]$ where $a$ is a square-free positive integer, which is a localization of $\zz$ by the set of the powers of $a$. Note that it is still a principal ideal domain (PID).

For a number ring $R$ in $K$, its integral closure $\so$ in $K$ is the ring of $T$-integers $\oo_{K,T}$ for some $T$. If set $\ord=R\cap \ok$, then $T$ is finite if and only if $\so=\ok[\frac{1}{\epsilon}]$ for some $T$-unit $\epsilon\in\ok$ if and only if $R=\ord[\frac{1}{\epsilon'}]$ for some $T$-unit $\epsilon'\in\ord$. Since $\oo_{K,T}$ can be generated by $T$-units as $\ok$-algebra, and the rank of the group of $T$-units is $(r+\#T)$ where $r$ is the rank of $\ok^*$ \cite[Theorem 10.9]{arith_nr}, then $T$ is finite if and only if $\so=\ok[\frac{1}{u_1},\cdots,\frac{1}{u_t}]=\ok[\frac{1}{\epsilon}]$ where $\epsilon=\prod u_i$ is an integral $T$-unit. Moreover, if $\so=\ok[\frac{1}{\epsilon}]$ for a $T$-unit $\epsilon\in \ok$, then $\frac{1}{\epsilon^m}$ belongs to $R$ for some positive integer $m$ since $[\so^*: R^*]$ is finite, which implies $\ord[\frac{1}{\epsilon^m}]\subseteq R$. On the other hand, if $x\in R$, then there is a positive integer $e$ such that $x\epsilon^{me}\in\ok$. Then $x\epsilon^{me}\in \ok\cap R=\ord$ which implies $R\subseteq\ord[\frac{1}{\epsilon^m}]$, and hence $R=\ord[\frac{1}{\epsilon^m}]$. For the sufficiency of the second part, it is clear since $\so=R\ok$. Through all these argument now we obtain

\begin{proposition}\label{local.ord.}
For a number ring $R$ in $K$, suppose its integral closure $\so=\oo_{K,T}$ for some $T$. Let $\ord=\ok\cap R$. Let $\f$ be the conductor of $R$, $\f_\ord$ be that of $\ord$. Under the \textbf{Assumption} in Section $\S$\ref{basic-notation-and-results}, then there exist some $T$-unit $\epsilon$ belonging to $\ord$ such that $R$ is a localization of $\ord$ by the set of the powers of $\epsilon$. Hence $\oo_{K,T}$ is a localization of $\ok$ by the set of powers of $\epsilon$. Moreover, $\f_\ord$ is equal to $\f\cap\ok$, and $\f$ is equal to $\f_\ord\oo_{K,T}$.
\end{proposition}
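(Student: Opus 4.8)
The plan is to assemble the proposition from the chain of equivalences established in the paragraph preceding it, so the real work is organizing those observations rather than proving something new. First I would fix the setup: $\so = \oo_{K,T}$ with $T$ finite by the \textbf{Assumption}, and $\ord = \ok\cap R$. The key input is the structure of $T$-units: by \cite[Theorem 10.9]{arith_nr} the group $\so^* = U_T$ has torsion-free rank $r + \#T$ where $r = \mathrm{rank}\,\ok^*$, so $\so$ can be generated over $\ok$ by finitely many $T$-units $u_1,\dots,u_t$, and setting $\epsilon_0 = \prod u_i \in \ok$ (an integral $T$-unit) we get $\so = \ok[\tfrac{1}{\epsilon_0}]$, i.e. $\so$ is the localization of $\ok$ at the powers of $\epsilon_0$. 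This is the first milestone.

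Next I would descend from $\so$ to $R$. Since $[\so^*:R^*]$ is finite (a fact quoted from \cite{arith_nr}, already used in the proof of Lemma \ref{lemma 3.2}), some power $\tfrac{1}{\epsilon_0^m} \in R^*$, hence lies in $R$; put $\epsilon = \epsilon_0^m \in \ord$ (it is in $\ok$ and in $R$, so in $\ord$, and it is still a $T$-unit). Then $\ord[\tfrac{1}{\epsilon}] \subseteq R$ trivially. For the reverse inclusion: given $x \in R \subseteq \so = \ok[\tfrac{1}{\epsilon_0}]$, clearing denominators gives $x\epsilon_0^{N} \in \ok$ for some $N$, hence $x\epsilon^{e} \in \ok$ for $e$ large; but $x\epsilon^e \in R$ as well, so $x\epsilon^e \in \ok\cap R = \ord$, whence $x \in \ord[\tfrac{1}{\epsilon}]$. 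This gives $R = \ord[\tfrac1\epsilon]$, and localizing further (or just noting $\so = R\ok = \ord[\tfrac1\epsilon]\ok = \ok[\tfrac1\epsilon]$) recovers the statement about $\so$.

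Finally I would handle the two conductor identities. The equality $\f_\ord = \f\cap\ok$ is exactly what was proved in the first paragraph of the proof of Lemma \ref{lemma 3.2}: one inclusion uses that $\f\cap\ok$ is an ideal of $\ok$ contained in $R$ (so $x\ok \subseteq x\so\cap\ok \subseteq R\cap\ok = \ord$ forces $x \in \f_\ord$), and the reverse uses $\f_\ord\ok = \f_\ord \subseteq \ord \subseteq R$ together with $\so = \ok[\tfrac1\epsilon]$ to see $\f_\ord\so \subseteq R$, hence $\f_\ord \subseteq \f$, so $\f_\ord \subseteq \f\cap\ok$. For $\f = \f_\ord\so$: the inclusion $\f_\ord\so \subseteq \f$ was just observed; conversely, if $y\so \subseteq R$ then $y\ok \subseteq y\so \cap \ok \subseteq R\cap\ok = \ord$ so $y \in \f_\ord$, but also $y \in \f \subseteq \so$, and since $\so$ is generated over $\ord$ (indeed over $\ok$) by $\tfrac1\epsilon$ with $\epsilon$ a unit, one checks $\f = \f\so \subseteq \f_\ord\so$; alternatively extend the identity $\f_\ord = \f\cap\ok$ to $\so$ using Proposition \ref{basic prop. of number ring}(4), noting primes in $T$ become trivial in $\so$. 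I expect the second conductor identity to be the only mildly delicate point — one must be careful that passing to $\so$ kills precisely the $T$-primes and nothing else, which is where Proposition \ref{basic prop. of number ring}(4) does the bookkeeping; everything else is a routine denominator-clearing argument.
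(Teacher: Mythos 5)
Your treatment of the localization structure ($\so=\ok[\frac{1}{\epsilon_0}]$ via the rank of $U_T$ from \cite[Theorem 10.9]{arith_nr}, then $R=\ord[\frac{1}{\epsilon}]$ with $\epsilon=\epsilon_0^m$ via finiteness of $[\so^*:R^*]$ and clearing denominators) and your proof of $\f_\ord=\f\cap\ok$ are exactly the paper's arguments and are correct.

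The one genuine problem is the inclusion $\f\subseteq\f_\ord\oo_{K,T}$. You take $y\in\f$ and write $y\ok\subseteq y\so\cap\ok\subseteq R\cap\ok=\ord$ to conclude $y\in\f_\ord$. But $y\ok\subseteq y\so\cap\ok$ needs $y\ok\subseteq\ok$, i.e.\ $y\in\ok$, whereas a general element of $\f$ only lies in $\so=\ok[\frac{1}{\epsilon}]$ and may have negative valuation at the primes of $T$. (Indeed the identity you are proving says $\f=\f_\ord[\frac{1}{\epsilon}]$, which strictly contains $\f_\ord$ whenever $T\neq\emptyset$, so the implication ``$y\in\f\Rightarrow y\in\f_\ord$'' and the ensuing chain $\f=\f\so\subseteq\f_\ord\so$ cannot be correct.) This step is lifted from the proof of Lemma 3.2, where it is applied only to $x\in\tilde\f=\f\cap\ok$, which is why it is valid there. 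The paper's repair is a denominator-clearing argument: for $x\in\f$ one has $x\ok\subseteq x\so\subseteq R=\ord[\frac{1}{\epsilon}]$, and since $x\ok$ is finitely generated as an $\ok$-module, $\epsilon^l x\ok\subseteq\ord$ for some $l$, so $\epsilon^l x\in\f_\ord$ and $x\in\f_\ord[\frac{1}{\epsilon}]=\f_\ord\oo_{K,T}$. Your parenthetical alternative --- that every ideal of the localization $\so=\ok[\frac{1}{\epsilon}]$ is the extension of its contraction to $\ok$, whence $\f=(\f\cap\ok)\so=\f_\ord\so$ directly --- is a valid and arguably cleaner fix, but as written it is only gestured at (``one checks''), and the primary argument it is meant to support is false; you should promote it to the actual proof or carry out the denominator-clearing explicitly.
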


\begin{proof}
 It has been proved in Lemma \ref{lemma 3.2} that $\f_\ord\supseteq\f\cap\ok$. And it is also obvious that $\f_\ord\ok[\frac{1}{\epsilon}]\subseteq\ord[\frac{1}{\epsilon}]$, which implies that $\f_\ord\subseteq\f\cap\ok$ and hence $\f_\ord=\f\cap\ok$. Since $\f$ is an $\oo_{K,T}$-ideal so $\f_\ord\oo_{K,T}$ is contained in $\f$. For any $x\in\f$, it satisfies $x\ok[\frac{1}{\epsilon}]\subseteq\ord[\frac{1}{\epsilon}]$. Then there exists a positive integer $l$ such that $\epsilon^lx\ok\subseteq\ord$. It follows that $\epsilon^l x\in\f_\ord$. Then $x\in\f_\ord[\frac{1}{\epsilon}]=\f_\ord\oo_{K,T}$. The proof is complete.
\end{proof}

When extending a number ring in $\Q$ to that in $K$, we have

\begin{corollary}\label{complete n.r.}\label{fin.gen.ZTmod}
Given a number ring $\zz[\frac{1}{a}]$ in $\Q$ and a number ring $R$ in $K$. Set $T$ to be the finite set of all the prime ideals of $\ok$ dividing $a\ok$. Then
\begin{itemize}
  \item[(1)] The integral closure of $\zz[\frac{1}{a}]$ in $K$ is $\ok[\frac{1}{a}]$, which is equal to $\oo_{K,T}$. In this case, $\oo_{K,T}$ is a free $\zz[\frac{1}{a}]$-module of rank $n$.
  \item[(2)] If $R$ is a finitely generated $\zz[\frac{1}{a}]$-module, then $R=\ord[\frac{1}{a}]$, where $\ord=\ok\cap R$.
\end{itemize}
\end{corollary}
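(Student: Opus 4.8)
The plan is to deduce (2) from (1), so I would first prove (1) in three moves. Since $\ok$ is the integral closure of $\zz$ in $K$ and forming the integral closure commutes with localization, the integral closure of $\zz[\frac{1}{a}]$ in $K$ is the localization of $\ok$ at the multiplicative set $\{1,a,a^2,\dots\}$, that is, $\ok[\frac{1}{a}]$; the only computation is the routine one of clearing denominators in a monic equation over $\zz[\frac{1}{a}]$ to push an element of $K$ into $\ok$ after multiplying by a power of $a$. Next I would identify $\ok[\frac{1}{a}]$ with $\oo_{K,T}$ by comparing $\p$-adic valuations: for $y\in\ok$ and $k\ge 0$ the element $y/a^k$ has nonnegative valuation at every $\p\nmid a\ok$, while conversely any $x\in K$ with $\mathrm{ord}_\p(x)\ge 0$ for all $\p\notin T$ has its (finitely many) poles concentrated among the primes of $T$, all dividing $a$, so $a^kx\in\ok$ for $k$ large, whence $x\in\ok[\frac{1}{a}]$. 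Finally, taking an integral basis $\omega_1,\dots,\omega_n$ of $\ok$ over $\zz$ and writing $\ok[\frac{1}{a}]=\ok\otimes_\zz\zz[\frac{1}{a}]$ shows that the $\omega_i$ form a $\zz[\frac{1}{a}]$-basis (their independence over $\zz[\frac{1}{a}]\subseteq\Q$ being inherited from $\Q$-linear independence in $K$), so $\oo_{K,T}$ is free of rank $n=[K:\Q]$.

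For (2), the key observation is that a ring $R$ which is a finitely generated $\zz[\frac{1}{a}]$-\emph{module} consists of elements integral over $\zz[\frac{1}{a}]$, so, its field of fractions being $K$, part (1) gives $R\subseteq\ok[\frac{1}{a}]$; incidentally this shows that the associated set $T$ of $R$ is exactly the finite set of primes above $a$, so Proposition~\ref{local.ord.} and the rest of the structure theory apply. The inclusion $\ord[\frac{1}{a}]\subseteq R$ is immediate from $\ord=\ok\cap R\subseteq R$ and $\frac{1}{a}\in R$; for the reverse, I would take $x\in R$, write $x=y/a^k$ with $y\in\ok$ using $R\subseteq\ok[\frac{1}{a}]$, observe that $y=a^kx\in R$ since $a\in\zz[\frac{1}{a}]\subseteq R$, and conclude $y\in\ok\cap R=\ord$, hence $x\in\ord[\frac{1}{a}]$. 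Thus $R=\ord[\frac{1}{a}]$.

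I do not anticipate a genuine obstacle: the statement is essentially a bookkeeping consequence of the structure theory already in place (Propositions~\ref{basic prop. of number ring} and~\ref{local.ord.}). The only points requiring a little care are, in (1), that $T$ is taken to be \emph{exactly} the set of primes dividing $a\ok$, so that a single power of $a$ suffices to clear all denominators; and, in (2), that the hypothesis ``finitely generated module'' (rather than merely ``$\zz[\frac{1}{a}]$-algebra'') is precisely what forces integrality and hence the inclusion $R\subseteq\ok[\frac{1}{a}]$ on which everything rests.
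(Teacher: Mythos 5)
Your proof is correct and follows essentially the same route as the paper's: reduce part (1) to the fact that the integral closure of $\zz[\frac{1}{a}]$ in $K$ is $\ok[\frac{1}{a}]=\oo_{K,T}$, and for (2) use the determinant-trick integrality of a finitely generated module to get $R\subseteq\ok[\frac{1}{a}]$ and then clear denominators by a power of $a$ to land in $\ok\cap R=\ord$. The only cosmetic differences are that you exhibit an explicit $\zz[\frac{1}{a}]$-basis where the paper invokes the structure theorem for finitely generated torsion-free modules over a PID, and you record the easy inclusion $\ord[\frac{1}{a}]\subseteq R$, which the paper leaves implicit.
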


\begin{proof}
The integral closure of $\zz[\frac{1}{a}]$ in $K$ must contain $\ok[\frac{1}{a}]$, which is a number ring in $K$. And their integral closures in $K$ are identical. It follows from Proposition \ref{basic prop. of number ring} (3) that the integral closure of $\ok[\frac{1}{a}]$ in $K$ is itself. Hence the integral closure of $\zz[\frac{1}{a}]$ in $K$ is $\ok[\frac{1}{a}]$, and is clearly equal to $\oo_{K,T}$. Since $\oo_{K,T}$ is torsion-free and $\zz[\frac{1}{a}]$ is a PID, so it is a free $\zz[\frac{1}{a}]$-module with rank equal to the degree of $K/\Q$.

If $R$ is a finitely generated  $\zz[\frac{1}{a}]$-module, then $R$ is contained in the integral closure of $\zz[\frac{1}{a}]$ in $K$, which is $\ok[\frac{1}{a}]$. Then for any element $x\in R$, there exists a positive integer $m$ such that $a^m x\in \ok\cap R=\ord$, which implies that $x\in\ord[\frac{1}{a}]$. The proof is complete.
\end{proof}

Rings class fields are originally applied to the integral solvability of quadratic equations of the form $p=x^2+ny^2$ over $\zz$, see \cite{cox}. Then Wei and Xu \cite{multi_norm} construct a class of idele groups called $\mathbf{X}$-\emph{admissible subgroups} for determining the integral points for multi-norm tori, and interpret the $\mathbf{X}$-admissible subgroup in terms of Brauer-Manin obstruction. By this method they give an idelic form criterion of the integral solvability of a quadratic equation of the form $a=x^2\pm dy^2$ over $\zz$ for an arbitrary integer $a$. It can recover the classical result in \cite{cox} when $a$ is prime and the $\mathbf{X}$-admissible subgroup in this case is exactly the idelic correspondence of the ring class field of the order $\zz[\sqrt{-d}]$ in the quadratic field $\Q(\sqrt{- d})$. Also they consider general norm form equations for higher dimensional tori, but do not give a certain description of the class field corresponding to the $\mathbf{X}$-admissible subgroup. In \cite{Integralrep.}, Lv, Shentu and Deng apply this method to the case of relative quadratic extension, say $F(\sqrt{-d})/F$, and consider the $\ord_F$-integral solvability of a quadratic equation $ax^2+bxy+cy^2+g=0$ over $\oo_F$. As in the case of absolute quadratic extension (i.e. $\Q(\sqrt{-d})/\Q$), under some assumptions one can choose the $\mathbf{X}$-admissible subgroup to be the ring class field of the order $\oo_F[\sqrt{-d}]$ in $F(\sqrt{-d})$.

Now we may extend these results to the relative extension case for a higher degree norm form equation over a number ring.

Let $E/F$ be a finite extension of number fields of relative degree $n$, $R$ be a number ring in $F$. Let $W$ be a number ring in $E$ which is an $R$-module that can be generated by $n$ elements $\{\alpha_1,\cdots,\alpha_n\}$ and $f(x_1,\cdots,x_n)=N_{E/F}(x_1\alpha_1+\cdots+x_n\alpha_n)\in R[x_1,\cdots,x_n]$. Note that the second condition is automatic if $R$ is integral closed in $F$. For any $\p\in\Omega_F$, denote by $\bar R_\p$ the closure of $R$ in the complete field $F_\p$, $W_\p=W\otimes_R\bar R_\p$ and $E_\p=E\otimes_F F_\p$.

 For any $b\in R$, let $\mathbf{X}$ be the affine scheme over $R$ defined by the equation $f(x_1,\cdots,x_n)=b$. Obviously $f(x_1,\cdots,x_n)=b$ has a solution belonging to $R^n$ ($R$-integral solution) if and only if $\mathbf{X}(R)\neq\emptyset$ ($R$-point). Next we follow some of the notation used in \cite{Integralrep.}.

Denote by $R_{E/F}(\mathbb{G}_{m,E})$ the Weil restriction of $\mathbb{G}_{m,E}$ to $F$. Denote by $T$ the kernel of the norm map $N: R_{E/F}(\mathbb{G}_{m,E})\ra \mathbb{G}_{m,F}$. It is a torus and $\mathbf{T}:=\text{Spec}(R[x_1,\cdots,x_n]/(f-1))$ is an $R$-model of $T$. Let $X_F$ be the generic fiber of $\mathbf{X}$. Then it is natural that $X_F$ is a $T$-torsor. Hence given any $P\in X_F(F)\subseteq E^*$ (assume that $X_F(F)\neq\emptyset$), there is an isomorphism $\phi_P: X_F\cong T, x\mapsto P^{-1}x$. Particularly $\phi_P(\mathbb{A}_F): X_F(\mathbb{A}_F)\cong T(\mathbb{A}_F)$.

 Denote by $\lambda_E$ the natural injective homomorphism $T(\mathbb{A}_F)\hookrightarrow\mathbb{I}_E$. Note that $\bar R_\p=\oo_{F_\p}$ for almost all $\p\in\Omega_F$. Then $\prod_{\p\in\Omega_F}\mathbf{T}(\bar R_\p)$ (resp. $\prod_{\p\in\Omega_F}\mathbf{X}(\bar R_\p)$) can be viewed as a subset of $T(\mathbb{A}_F)$ (resp. $X_F(\mathbb{A}_F)$). Moreover, for any $p\in\Omega_\Q$,
 $$W_p=W\otimes_\zz\zz_p=W\otimes_R(R\otimes_\zz\zz_p)=W\otimes_R\prod_{\p\mid p\oo_F}\bar R_\p=\prod_{\p\mid p\oo_F}W_\p.$$
 Hence $\prod_{p\in\Omega_\Q}W_p^*=\prod_{\p\in\Omega_F}W_\p^*$, which, by Theorem \ref{idele-rcf}, is an open subgroup of $\mathbb{I}_E$ and corresponds to the ring class field of $W$.

 The monomorphism $\lambda_E$ maps the subsets $T(F)$ into $E^*$ and $\prod_{\p\in\Omega_F}\mathbf{T}(\bar R_\p)$ into $\prod_{\p\in\Omega_F}W_\p^*$. Hence it induces a homomorphism
$$\tilde\lambda_E: T(\mathbb{A}_F)/T(F)\prod_{\p\in\Omega_F}\mathbf{T}(\bar R_\p)\ra \mathbb{I}_E/E^*\prod_{\p\in\Omega_F}W_\p^*.$$
The injectivity of $\tilde\lambda_E$ is crucial. Following the definition from \cite{multi_norm} an open subgroup $\Xi$ of $\mathbb{I_K}$ is called $\mathbf{X}$-admissible if it contains $\lambda_E(\prod_{\p\in\Omega_F}\mathbf{T}(\bar R_\p))$ and the induced $\tilde\lambda_E$ is injective.

On the other hand, through the isomorphism $\phi_P$, we can read $\prod_{\p\in\Omega_F}\mathbf{X}(\bar R_\p)$ as a subset of $\mathbb{I}_E$ by the following
$$\prod_{\p\in\Omega_F}\mathbf{X}(\bar R_\p)\hookrightarrow X_F(\mathbb{A}_F)\xrightarrow[\phi_P]{\cong} T(\mathbb{A}_F)\hookrightarrow\mathbb{I}_E\xrightarrow{\cdot P}\mathbb{I}_E.$$
Denote this composite map by $\tilde f_E$. Actually this embedding $\tilde f_E$ can be naturally interpreted by another way, not requiring that $X_F(F)\neq\emptyset$: for every place $\p$ it is clear that
$$\mathbf{X}(\bar R_\p)=\{x\in W_\p: \text{N}_{E_\p/F_\p}(x)=b\}$$
through the bijection between $(x_1,\cdots,x_n)\in \bar R_\p^n$ and $x=\sum x_i\alpha_i\in W_\p$. Since $W_\p=\prod_{\mathfrak{P}\mid\p}\oo_{E_\mathfrak{P}}$ for almost all $\p\in\Omega_F$ following Theorem \ref{idele-rcf}, and if $\p\nmid a\oo_F$ then $\text{N}_{E_\p/F_\p}(x)=b$ implies that $x\in W^*_\p$, then $\mathbf{X}(\bar R_\p)\subseteq \prod_{\mathfrak{P}\mid\p}\oo^*_{E_\mathfrak{P}}$ for almost all $\p\in\Omega_F$.

Denote by $H_W$ the ring class field of $W$. Let $\psi_{H_W/E}:\mathbb{I}_E/E^*\prod_{\p\in\Omega_F} W^*_\p\ra \Gal(H_W/E)$ be the induced isomorphism under the global norm residue symbol $\psi_E$. Under some assumptions, $\prod_{\p\in\Omega_F}W^*_\p$ is an admissible subgroup for $\mathbf{X}$, and hence gives a criterion of the solvability of $f(x_1,\cdots,x_n)=a$ over $R$.

\begin{proposition}\label{prop_integralpt}
Let $\mathcal{U}$ be a complete set of representatives of $R^*/(R^*)^n$. Assume that for every $u\in\mathcal{U}$, if the norm form equation $f(x_1,\cdots,x_n)=u$ is solvable over $\bar R_\p$ for all $\p\in\Omega_F$ then it is solvable over $R$. Then $\mathbf{X}(R)\neq\emptyset$ if and only if $X_F(F)\neq\emptyset$ and there exists an $x\in\prod_{\p\in\Omega_F}\mathbf{X}(\bar R_\p)$ such that $\psi_{H_W/E}(\tilde f_E(x))=1$.
\end{proposition}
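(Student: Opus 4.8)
The plan is to translate the $R$-point problem into an idelic statement via the $T$-torsor structure and then apply the idele-theoretic class field theory packaged in Theorem \ref{idele-rcf}. First I would prove the ``only if'' direction: if $\mathbf{X}(R)\neq\emptyset$ then in particular $X_F(F)\neq\emptyset$, and any $R$-point $P_0$ gives, through the adelic diagonal embedding, an element of $\prod_{\p\in\Omega_F}\mathbf{X}(\bar R_\p)$; one checks that under the composite $\tilde f_E$ this element lands in $E^*\prod_{\p\in\Omega_F}W_\p^*$ (the global $E$-point $P_0$ contributes the $E^*$ factor, while the local integrality of $P_0^{-1}P_0$ is trivial), so its image under $\psi_{H_W/E}$ is $1$ because $\psi_{H_W/E}$ is precisely the isomorphism $\mathbb{I}_E/E^*\prod_\p W_\p^* \cong \Gal(H_W/E)$.

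For the ``if'' direction, the plan is the standard weak-approximation/admissibility argument. Suppose $X_F(F)\neq\emptyset$, fix $P\in X_F(F)$ so that $\phi_P$ is defined, and take $x\in\prod_{\p\in\Omega_F}\mathbf{X}(\bar R_\p)$ with $\psi_{H_W/E}(\tilde f_E(x))=1$. The condition $\psi_{H_W/E}(\tilde f_E(x))=1$ says $\tilde f_E(x)\in E^*\prod_{\p\in\Omega_F}W_\p^*$, so we can write $\tilde f_E(x)=\beta\cdot(w_\p)_\p$ with $\beta\in E^*$ and $(w_\p)_\p\in\prod_\p W_\p^*$. Pulling back through $\cdot P$ and $\phi_P$, this produces an element of $T(\mathbb{A}_F)$ lying in $\lambda_E^{-1}(E^*)\cdot\lambda_E^{-1}(\prod_\p W_\p^*)$; since $\lambda_E^{-1}(E^*)\cap T(\mathbb{A}_F)=T(F)$ (a point of the torus that is globally an $E^*$-element is an $F$-rational point of $T$) and $\lambda_E^{-1}(\prod_\p W_\p^*)$ contains $\prod_\p\mathbf{T}(\bar R_\p)$, we conclude that $x$ lies in the $T(F)\prod_\p\mathbf{T}(\bar R_\p)$-orbit of a genuine local point, i.e. after multiplying $x$ by a suitable element of $T(F)$ we land in $\prod_{\p\in\Omega_F}\mathbf{X}(\bar R_\p)$ with the same global value under $\tilde f_E$. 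Then $\mathbf{X}(R)\neq\emptyset$ follows once we know the equation with right-hand side $b$ is locally everywhere solvable over the $\bar R_\p$ implies it is globally solvable over $R$ — and this is where the hypothesis on $\mathcal{U}=R^*/(R^*)^n$ enters: the local solutions we have differ from $b$ by a unit that is a local norm everywhere, so its class in $R^*/(R^*)^n$ is represented by some $u\in\mathcal{U}$ for which the hypothesis grants an $R$-solution of $f=u$, and rescaling by the $n$-th root recovers an $R$-solution of $f=b$.

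The step I expect to be the main obstacle is making precise the chain of identifications at the adelic level and verifying that $\tilde f_E$ really does descend to a well-defined map $\prod_\p\mathbf{X}(\bar R_\p)\to\mathbb{I}_E/E^*\prod_\p W_\p^*$ that does not depend on the auxiliary choice of $P\in X_F(F)$ — two choices of $P$ differ by an element of $T(F)$, which must be absorbed into the $E^*$ factor, and one must check the induced map is compatible with the torsor structure so that the condition $\psi_{H_W/E}(\tilde f_E(x))=1$ is intrinsic. Equivalently, the crux is the injectivity of $\tilde\lambda_E$ (the $\mathbf{X}$-admissibility of $\prod_\p W_\p^*$), which is exactly what lets us go from ``$\tilde f_E(x)$ is trivial in the idele class group modulo $\prod_\p W_\p^*$'' back to ``$x$ is, up to $T(F)\prod_\p\mathbf{T}(\bar R_\p)$, a local integral point''. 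Once that bookkeeping is in place, the remaining arguments — the unit-class reduction via $\mathcal{U}$ and the identification of $\lambda_E^{-1}(E^*)\cap T(\mathbb{A}_F)$ with $T(F)$ — are routine.
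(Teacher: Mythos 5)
Your overall strategy is the right one, and it is essentially the paper's (which simply defers to the proofs of Lemma~1 and Proposition~1 of \cite{Integralrep.} for the case $R=\oo_F$): translate the Artin condition into $\tilde f_E(x)\in E^*\prod_\p W_\p^*$ and use the torsor structure plus the hypothesis on $\mathcal U$. But the central step of your ``if'' direction is a non sequitur as written. From $\tilde f_E(x)=\beta\cdot(w_\p)_\p$ with $\beta\in E^*$ and $(w_\p)_\p\in\prod_\p W_\p^*$ you conclude that $\phi_P(x)$ lies in $T(F)\cdot\prod_\p\mathbf{T}(\bar R_\p)$ ``since $\lambda_E^{-1}(E^*)\cap T(\mathbb A_F)=T(F)$ and $\lambda_E^{-1}(\prod_\p W_\p^*)\supseteq\prod_\p\mathbf{T}(\bar R_\p)$''. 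That does not follow: the factor $\beta$ need not have relative norm $1$ (so it is not in $T(F)$) and the $w_\p$ need not have norm $1$ (so they are not in $\mathbf{T}(\bar R_\p)$), and a factorization inside the big groups does not restrict to a factorization inside the subgroups. Correcting the factorization is exactly where the hypothesis on $\mathcal U$ must be used: from $N_{E_\p/F_\p}(x_\p)=b$ one gets $v:=b/N_{E/F}(\beta)=N_{E_\p/F_\p}(w_\p)\in F^*\cap\prod_\p\bar R_\p^*=R^*$, so $v$ is an everywhere-local norm from the $W_\p^*$; writing $v=us^n$ with $u\in\mathcal U$, $s\in R^*$, the hypothesis produces $\delta\in W$ with $N_{E/F}(\delta)=u$, and replacing $\beta$ by $\beta s\delta$ and $w_\p$ by $w_\p(s\delta)^{-1}$ repairs both factors. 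You relegate precisely this point (``the injectivity of $\tilde\lambda_E$'') to ``bookkeeping'' at the end, but it is the mathematical content of the step, not bookkeeping; and your sentence asserting that local solvability of $f=b$ over all $\bar R_\p$ implies solvability over $R$ misstates the hypothesis --- that claim would make the Artin condition redundant, whereas the hypothesis is only a Hasse principle for the unit values $f=u$.

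You also never state the one ingredient the paper singles out as the genuinely new point for a general number ring $R$ (as opposed to $R=\oo_F$), namely
$F^*\cap\prod_{\p\in\Omega_F}\bar R_\p^*=\bigcap_{\p\in\Omega_F\setminus\infty_F}R^*_{\p\cap R}=R^*$, whence $\mathbf{X}(F)\cap\prod_{\p}\mathbf{X}(\bar R_\p)=\mathbf{X}(R)$. Without this identity, the endgame of your argument only yields an $F$-rational point of $X_F$ that is everywhere locally integral, and you cannot conclude that it is an $R$-point. With these two repairs --- the explicit norm-correction of the factorization via $\mathcal U$, and the identification $\mathbf{X}(F)\cap\prod_\p\mathbf{X}(\bar R_\p)=\mathbf{X}(R)$ --- your outline does assemble into a correct proof along the same lines as the one the paper cites.
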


\begin{proof}
It is analogous to the case when $R=\oo_F$. See the proof of Lemma 1 and Proposition 1 in \cite{Integralrep.}. We should notice that $F^*\cap\prod_{\p\in\Omega_F}\bar R^*_\p=\cap_{\p\in\Omega_F}(\bar R^*_\p\cap F^*)=\cap_{\p\in\Omega_F\setminus\infty_F}R^*_{\p\cap R}=R^*$, and it follows that $\mathbf{X}(F)\cap\prod_{\p\in\Omega_F}\mathbf{X}(\bar R_\p)=\mathbf{X}(R)$.
\end{proof}

In the following we consider the absolute case $F=\Q$. When $E/\Q$ is an imaginary quadratic extension and $R=\zz$, the classic ideal-version criterion in \cite{cox} can be recovered by the above idele-version criterion, see Corollary 4.2 in \cite{multi_norm}. In the case of real quadratic extension, the problem of the sign appeared in the norm form equations can not been dealt with effectively by the ideal-version method (also known as Gauss's method). But some connection can be built between these two version criterions. More generally we have the following

\begin{proposition}
Let notation be as above. Take $F=\Q$, $R=\zz$. Then $W$ is contained in $\oo_E$ hence an order in $E$. Let $\mathbf{X}'$ be the affine scheme over $\zz$ defined by the equation $f(x_1,\cdots,x_n)=-b$. Take $b=\ell$ to be a rational prime and $\ell\nmid[\oo_E:W]$.
 Then the following are equivalent:
 \begin{itemize}
   \item[(1)] $\ell=f(x_1,\cdots,x_n)$ or $-\ell=f(x_1,\cdots,x_n)$ is solvable over $\zz$;
   \item[(2)] There exists an $x\in\prod_{p\in\Omega_\Q}\mathbf{X}(\zz_p)$ or $x\in\prod_p\mathbf{X}'(\zz_p)$ such that $\psi_{H_W/E}(\tilde f_E(x))=1$;
   \item[(3)] There exists a prime ideal $\p_\ell$ of $E$ lying over $\ell$ such that $N_{E/\Q}(\p_\ell)=\ell$ and $\p_\ell$ splits completely in $H_W$.
 \end{itemize}
\end{proposition}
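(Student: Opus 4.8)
The plan is to prove the equivalence by combining the general criterion of Proposition \ref{prop_integralpt} with the idele-theoretic description of $H_W$ from Theorem \ref{idele-rcf} and the splitting characterization in Remark \ref{Artin1}. First I would check that the hypotheses of Proposition \ref{prop_integralpt} hold in the present setting. Since $F=\Q$ and $R=\zz$, we have $R^*=\{\pm 1\}$, and when $n$ is odd $R^*/(R^*)^n$ is trivial, so $\mathcal{U}=\{1\}$; when $n$ is even one may take $\mathcal{U}=\{1,-1\}$. For $u=1$ the equation $f=1$ is the norm-one torus equation $\mathbf{T}(\zz)$, which always has the solution coming from $\alpha_i$-coordinates of $1\in W^*$ (that is, $1\in W$), so local solvability trivially implies global solvability. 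For $u=-1$ one needs that local solvability of $N_{E/\Q}(x)=-1$ everywhere implies a global solution; this is where I expect the argument to quietly assume $-1$ is a norm from $W$, or to restrict to the case where this holds — more carefully, the two scheme equations $f=\ell$ and $f=-\ell$ differ exactly by whether $-1\in N_{E/\Q}(W^*)$, which is the reason both appear symmetrically in statement (1). I would make this explicit: by Proposition \ref{prop_integralpt} applied to $\mathbf{X}$ (resp. $\mathbf{X}'$), $\mathbf{X}(\zz)\neq\emptyset$ (resp. $\mathbf{X}'(\zz)\neq\emptyset$) iff $X_F(\Q)\neq\emptyset$ and there is $x\in\prod_p\mathbf{X}(\zz_p)$ (resp. $\mathbf{X}'(\zz_p)$) with $\psi_{H_W/E}(\tilde f_E(x))=1$. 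This already gives $(1)\Leftrightarrow(2)$ once one observes that the condition $X_F(\Q)\neq\emptyset$ — i.e. the generic norm-$\ell$ torsor has a rational point — is subsumed: if the adelic point $x$ maps to $1$ under $\psi_{H_W/E}$ then by the admissibility of $\prod_\p W_\p^*$ and injectivity of $\tilde\lambda_E$ the torsor is trivial, hence has a rational point.

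For the implication $(2)\Rightarrow(3)$ and $(3)\Rightarrow(2)$ I would translate everything into the prime $\ell$. Since $\ell\nmid[\oo_E:W]$, the conductor $\f_W$ of $W$ (as an order in $E$, viewed as a number ring over $\Q$) is prime to $\ell$, so $\ell$ is unramified in $H_W$ and, more importantly, $W_p^*=\prod_{\hp\mid\ell}\oo_{E_\hp}^*$ for the prime $p=\ell$ by the explicit formula \eqref{idel.ord.case} in Theorem \ref{idele-rcf}. Now an $x\in\prod_p\mathbf{X}(\zz_p)$ has, at each place $p\neq\ell$, norm $b=\pm\ell$ a unit, hence $x_p\in W_p^*$; only at $p=\ell$ does $x_\ell$ fail to be a unit. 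Computing $\psi_{H_W/E}(\tilde f_E(x))$: by the description of $\tilde f_E$ and of the open subgroup $\prod_\p W_\p^*$, the idele $\tilde f_E(x)\in\mathbb{I}_E$ is trivial modulo $E^*\prod_\p W_\p^*$ except possibly in its $\ell$-component. The norm condition $N_{E/\Q}(x)=\pm\ell$ forces $\prod_{\hp\mid\ell}\mathrm{ord}_\hp(x_\hp)f_\hp=1$ (where $f_\hp$ is the residue degree of $\hp$ over $\ell$), so exactly one prime $\p_\ell\mid\ell$ has $\mathrm{ord}_{\p_\ell}(x_{\p_\ell})=1$, $f_{\p_\ell}=1$, and all others have valuation $0$; thus $N_{E/\Q}(\p_\ell)=\ell$ and $\tilde f_E(x)$ is (modulo $E^*\prod W_\p^*$) represented by a uniformizer at $\p_\ell$ and units elsewhere. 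Hence $\psi_{H_W/E}(\tilde f_E(x))$ is the Artin symbol $\left(\frac{H_W/E}{\p_\ell}\right)$, which is trivial iff $\p_\ell$ splits completely in $H_W$. Conversely, given $\p_\ell$ as in (3) with $N_{E/\Q}(\p_\ell)=\ell$ splitting completely in $H_W$, one writes down a local point $x$ at each place: at $\ell$ a generator of $\p_\ell$ in $W_\ell^*\cdot(\text{uniformizer})$, at all other places a unit of the right norm (available by local solvability, which follows since $N_{E/\Q}(\p_\ell)=\ell$ gives local solvability at $\ell$ and unit norm is locally solvable everywhere else), and this $x$ satisfies $\psi_{H_W/E}(\tilde f_E(x))=1$; the sign $\pm\ell$ records which of $\mathbf{X}$ or $\mathbf{X}'$ the point lives on.

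The main obstacle I anticipate is the bookkeeping at the archimedean place and the sign: $E_\infty=E\otimes_\Q\R$ is either $\C$ (imaginary quadratic case, $n=2$) or a product of copies of $\R$ and $\C$ in general, and $W_\infty^*=(W\otimes_\zz\R)^*=(E\otimes_\Q\R)^*$ contributes nontrivially to $\prod_\p W_\p^*$. One must make sure that the real components of $\tilde f_E(x)$ are absorbed into $E^*\prod_\p W_\p^*$ — this is automatic because the infinite part of $\prod_\p W_\p^*$ is the full $\prod_{v\mid\infty}E_v^*$ — and that the distinction between representing $\ell$ versus $-\ell$ is exactly the obstruction to adjusting an adelic point by a global unit $-1\in\Q^*$ so that it lands in $\mathbf{X}$ rather than $\mathbf{X}'$. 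I would spell this out using $\Q^*\cap\prod_p\zz_p^*\cdot\prod_{v\mid\infty}\R^*=\{\pm1\}$ together with the hypothesis on $\mathcal{U}$: the two cases $b=\ell$ and $b=-\ell$ together cover precisely one orbit of the adelic solution set under the global units $R^*=\{\pm1\}$, which is why statement (1) is phrased as a disjunction. Apart from this sign/archimedean accounting and the careful invocation of $\ell\nmid[\oo_E:W]$ to guarantee $W_\ell^*=\prod_{\hp\mid\ell}\oo_{E_\hp}^*$ and unramifiedness, every step is a direct application of Proposition \ref{prop_integralpt}, Theorem \ref{idele-rcf}, and Remark \ref{Artin1}, so the proof should reduce largely to citing the relative-quadratic arguments of \cite{multi_norm} and \cite{Integralrep.} and checking they go through verbatim for general degree $n$.
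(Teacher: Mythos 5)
Your computation for $(2)\Rightarrow(3)$ is essentially the paper's: decompose $W_p\cong\prod_{\hp\mid p}\overline W_\hp$, use the norm condition to force all components to be units except at a single prime $\p_\ell\mid\ell$ with $\mathrm{ord}_{\p_\ell}=1$ and residue degree $1$, invoke $\ell\nmid[\oo_E:W]$ so that $W_\ell^*=\prod_{\hp\mid\ell}\oo_{E_\hp}^*$ and the resulting idele maps to $\p_\ell$ under $\iota$, and conclude via the Artin symbol. The genuine gap is in how you get back to statement (1). You route $(2)\Leftrightarrow(1)$ through Proposition \ref{prop_integralpt}, whose hypothesis — that local solvability of $f=u$ for $u\in\mathcal U$ implies global solvability — you cannot verify for $u=-1$ and, as you yourself note, may simply fail; the proposition being proved carries no such assumption, so your argument proves a weaker conditional statement. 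Your fallback, that the disjunction in (1) ``records'' whether $-1\in N_{E/\Q}(W^*)$, is a heuristic, not a proof. Your direct construction for $(3)\Rightarrow(2)$ has a second problem: at primes $p$ ramified in $E$ (or dividing the conductor of $W$) a unit of $\zz_p^*$ need not be a local norm from $W_p^*$, so ``unit norm is locally solvable everywhere else'' is false in general, and with both $(2)\Rightarrow(1)$ and $(3)\Rightarrow(2)$ broken you have no way to close the cycle.

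The paper closes the cycle by proving $(3)\Rightarrow(1)$ directly and purely ideal-theoretically, with no Hasse-type principle: since $\ell\nmid[\oo_E:W]$, the prime $\p_\ell$ is prime to the conductor of $W$, so splitting completely in $H_W$ means $[\p_\ell\cap W]=1$ in $\Pic(W)$, i.e.\ $\p_\ell=\alpha\oo_E$ for some $\alpha\in W$; then $|N_{E/\Q}(\alpha)|=N_{E/\Q}(\p_\ell)=\ell$, so writing $\alpha=\sum x_i\alpha_i$ with $x_i\in\zz$ gives $f(x_1,\dots,x_n)=\pm\ell$. The sign ambiguity of the generator's norm (coming from $\zz^*=\{\pm1\}$) is exactly what the disjunction in (1) absorbs, and $(1)\Rightarrow(2)\Rightarrow(3)\Rightarrow(1)$ then needs neither Proposition \ref{prop_integralpt} nor local solvability at ramified places. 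I recommend you replace your $(1)\Leftrightarrow(2)$ and $(3)\Rightarrow(2)$ steps with this direct $(3)\Rightarrow(1)$ argument.
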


\begin{proof}
1) $\Rightarrow$ 2) is trivial.

For 2) $\Rightarrow$ 3), there exists an $x\in\prod_{p\in\Omega_\Q}\mathbf{X}(\zz_p)$ such that $\psi_{H_W/E}(\tilde f_E(x))=1$ if and only if
\begin{equation}\label{Artincondition}
  \tilde f_E(\prod_{p\in\Omega_\Q}\mathbf{X}(\zz_p))\cap E^*\prod_{p\in\Omega_\Q}W^*_p\neq\emptyset,
\end{equation}
if and only if there exists an $(x_p)\in\prod_p W_p$ with $N_{E_p/\Q_p}(x_p)=\ell$ for all $p\in\Omega_\Q$ belongs to $E^*\prod_{p\in\Omega_\Q}W^*_p$.

Recall from Theorem \ref{idele-rcf}, $W_p\cong\prod_{\p\mid p\oo_E}\overline W_\p$ where $\overline W_\p$ is the closure of $W$ in the complete field $E_\p$. Write $\varphi_\p: W_p\ra\overline W_\p$ for the composition of this isomorphism with the projection to the place $\p$. Then $N_{E_p/\Q_p}(x_p)=\prod_{\p\mid p}N_{E_\p/\Q_p}(\varphi_\p(x_p))=\ell$, and then $\text{ord}_p(\ell)=\sum_{\p\mid p}f(\p/p)\text{ord}_\p(\varphi_\p(x_p))$ where $f(\p/p)$ is the residue class degree of $\p$ over $p$.

When $p\neq\ell$, we have $\mathrm{ord}_\p(\varphi_\p(x_p))=0$ for all $\p\mid p\oo_E$. Hence $\varphi_\p(x_p)\in\oo_{E_\p}^*$ and then $\varphi_\p(x_p)\in\overline W^*_\p$ since it is easy to show that $\oo^*_{E_\p}\cap\overline W_\p=\overline W^*_\p$, and therefore $x_p\in W^*_p$. When $p=\ell$, there is exactly one prime ideal $\p_\ell$ lying over $\ell$ such that $\mathrm{ord}_{\p_\ell}(\varphi_{\p_\ell}(x_\ell))=1$ and $f(\p_\ell/\ell)=1$. For other $\p\neq\p_\ell$ lying over $\ell$, $\mathrm{ord}_\p(\varphi_\p(x_\ell))=0$ and hence $\varphi_\p(x_\ell)\in\overline W^*_\p$. Note that $\varphi_{\p_\ell}(x_\ell)$ is an uniformizer in $E_{\p_\ell}$, denoted by $\pi_{\p_\ell}$, and $N_{E/\Q}(\p_\ell)=\ell$. Therefore, by multiplying some element in $\prod_p W^*_p$, $(x_p)$ is equal to the idele $i_\ell=(\cdots,1,\pi_{\p_\ell},1,\cdots)$ with $\pi_{\p_\ell}$ at the place $\p_\ell$ and $1$ others.

Following Theorem \ref{idele-rcf}, since $\ell\nmid[\oo_E:W]$ we have $\iota(i_{\ell})=\p_\ell$. Hence if $(x_p)$ belongs to $E^*\prod_{p\in\Omega_\Q}W^*_p$ then equivalently $i_\ell$ belongs to $E^*\prod_{p\in\Omega_\Q}W^*_p$, and then $\iota(i_{\ell})=\p_\ell=1$ in $\Pic(W)$ which amounts to that $\p_\ell$ splits completely in $H_W$.

For 3) $\Rightarrow$ 1), note that $\ell\nmid[\oo_E:W]$. Hence (3) means that $\p_\ell=\alpha\oo_E$ for some $\alpha\in\ord$ and $N_{E/\Q}(\p_\ell)=|N_{E/\Q}(\alpha)|=\ell$, which clearly implies (1).
\end{proof}

\begin{remark}
When $n$ is odd, (1) is equivalent to that $\ell=f(x_1,\cdots,x_n)$ is solvable over $\zz$, and (2) to that there exists an $x\in\prod_{p\in\Omega_\Q}\mathbf{X}(\zz_p)$ such that $\psi_{H_W/E}(\tilde f_E(x))=1$.
\end{remark}

If we take $R=\zz[\frac{1}{a}]$ for some positive integer $a$, then by Corollary \ref{fin.gen.ZTmod} $W=\zz[\frac{1}{a}]\ord$ for the order $\ord=W\cap\oo_E$. Using the ideal-version method we have

%

\begin{corollary}\label{main appli}
Let notation be as above. Take $b=\ell$ to be a rational prime and $\ell\nmid a$, $\ell\nmid [\oo_E:\ord]$. Then $\pm \ell a^m=f(x_1,\cdots,x_n)$ is solvable over $\zz$ for some non-negative integer $m$ if and only if there exists some prime ideal $\p_\ell$ lying over $\ell$ such that $N_{E/\Q}(\p_\ell)=\ell$ and $\p_\ell$ splits completely in the ring class field $H_W$. In particular, if $E/\Q$ is Galois and $W$ is integral closed in $E$, then the last condition can be simplified to that $\ell$ splits completely in $H_W$.
\end{corollary}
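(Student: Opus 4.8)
The plan is to reduce the solvability of the norm form equation to a statement about principal ideals of $W$, and then to run it through the dictionary $\Pic(R)\cong\Gal(H_R/K)$ of Section \ref{theory} together with Remark \ref{Artin1}. First I would use Corollary \ref{fin.gen.ZTmod} to write $W=\ord[\frac{1}{a}]$ with $\ord=W\cap\oo_E$ an order, and observe that since $a$ is a unit of $\zz[\frac{1}{a}]$, scaling an integer vector $(x_i)$ by a power of $a$ multiplies $f(x)$ by the corresponding power of $a^n$, while any $\beta\in W$ becomes an integral combination of the $\alpha_i$ after multiplication by a suitable power of $a$. Hence $\pm\ell a^m=f(x)$ is solvable over $\zz$ for some $m\geq0$ if and only if there is $\beta\in\ord$ with $N_{E/\Q}(\beta)=\pm\ell a^{m'}$ for some $m'\geq0$. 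Such a $\beta$, viewed in the Dedekind domain $\oo_{E,T}=\oo_E[\frac{1}{a}]$ — the integral closure of $W$, with $T$ the set of primes over $a$ — generates an ideal $\beta\oo_{E,T}$ whose norm down to $\zz[\frac{1}{a}]$ is $\ell\zz[\frac{1}{a}]$; unique factorization in $\oo_{E,T}$ forces $\beta\oo_{E,T}=\p_\ell\oo_{E,T}$ for a prime $\p_\ell$ of $\oo_E$ over $\ell$ with $N_{E/\Q}(\p_\ell)=\ell$.

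To transfer this to $H_W$: since $\ell\nmid[\oo_E:\ord]$, no prime over $\ell$ divides the conductor $\f_\ord=\f\cap\oo_E$ (Proposition \ref{local.ord.}), so $\p_\ell\cap\ord$ is invertible, $\p_\ell=(\p_\ell\cap\ord)\oo_E$, and $\mathfrak{P}:=\p_\ell\oo_{E,T}\cap W=(\p_\ell\cap\ord)W$ is prime to $\f$. Because $\oo_{E,T}$ is integral over $W$, an invertible fractional $W$-ideal lying between $W$ and $\oo_{E,T}$ which extends to $\oo_{E,T}$ trivially must equal $W$ (localize at each prime of $W$ and use the integrality relation); applied to $\mathfrak{P}(\beta W)^{-1}$, which extends trivially since $\beta\oo_{E,T}=\p_\ell\oo_{E,T}=\mathfrak{P}\oo_{E,T}$ by Corollary \ref{J(R,f)}, this gives $\mathfrak{P}=\beta W$, so $\mathfrak{P}$ is principal. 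By Remark \ref{Artin1} — $\p_\ell$ being unramified in $H_W/E$, as it divides neither $\f_\ord$ nor $a$, hence not the modulus $\tilde\f\m_0$ — this is exactly the statement that $\p_\ell$ splits completely in $H_W$. Conversely, if $\p_\ell\mid\ell$ has $N_{E/\Q}(\p_\ell)=\ell$ and splits completely in $H_W$, then $\mathfrak{P}$ is principal, say $\mathfrak{P}=\gamma W$ with $\gamma\in W$ (note $\gamma=\gamma\cdot1\in W$), whence $\gamma\oo_{E,T}=\p_\ell\oo_{E,T}$ and $N_{E/\Q}(\gamma)$ generates $\ell\zz[\frac{1}{a}]$, and the first paragraph returns an integral solution of $\pm\ell a^{m'}=f(x)$.

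For the last assertion: when $E/\Q$ is Galois and $W=\oo_{E,T}$ is integrally closed, the modulus $\tilde\f\m_0=\m_0$ of $H_W$ is the product of the primes over the rational integer $a$, hence $\Gal(E/\Q)$-stable, so $H_W/\Q$ is Galois; then $\Gal(E/\Q)$ acts transitively on the primes over $\ell$ and compatibly on their Frobenius in $H_W/E$, so ``some $\p_\ell\mid\ell$ with $N_{E/\Q}(\p_\ell)=\ell$ splits completely in $H_W$'' becomes equivalent to ``every prime over $\ell$ has $e=f=1$ over $\Q$ and splits completely in $H_W/E$'', i.e.\ to ``$\ell$ splits completely in $H_W$''.

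The step I expect to be the main obstacle is the very last one: passing from ``$N_{E/\Q}(\gamma)$ generates $\ell\zz[\frac{1}{a}]$'' to a representation whose prime-to-$a$ part is literally $\pm\ell a^m$ with $m\geq0$. A priori $N_{E/\Q}(\gamma)$ is only $\pm\ell$ times an arbitrary product of the primes dividing $a$, and to rectify the $a$-part one must replace $\gamma$ by $\gamma u$ for a $T$-unit $u$ of $\oo_{E,T}$ whose norm cancels the discrepancy; whether this is possible hinges on the image of $N_{E/\Q}(\oo_{E,T}^{*})$, equivalently on how the classes of the primes over $a$ sit inside $\Cl(\oo_E)$ (and, for general $\ord$, on the corresponding data in $\Pic(\ord)$). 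Pinning this down — and, if need be, isolating the hypotheses on $a$ (for instance that each rational prime dividing $a$ has a residue-degree-one factor in $E$, or that $W$ is integrally closed, as in the ``in particular'' clause) under which the argument closes — is the crux; the remaining steps are routine given the machinery of Sections \ref{theory}--\ref{idelic-cor}.
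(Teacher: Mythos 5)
Your overall route is the paper's: reduce solvability to the statement that some degree-one prime $\p_\ell$ over $\ell$ has $\p_\ell\oo_{E,T}\cap W$ principal in $\Pic(W)$, equivalently $\p_\ell\in P^{\f\m_0}_{E,W}$, and then invoke Remark \ref{Artin1}; for the Galois clause, use that the modulus and the congruence subgroup are $\Gal(E/\Q)$-stable (you need the stability of the congruence subgroup $P^{\m_0}_{E,\oo_{E,T}}$ as well, not only of the modulus, to conclude that $H_W/\Q$ is Galois --- the paper invokes both). The forward direction and the passage between $W$-ideals, $\oo_{E,T}$-ideals and $\oo_E$-ideals via Corollary \ref{J(R,f)} are fine and match the paper's use of $\beta\oo_{E,T}\cap\oo_E$.

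The step you flag at the end is a genuine gap in your write-up, and you have diagnosed it correctly: from $\mathfrak{P}=\gamma W$ you only get $N_{E/\Q}(\gamma)=\pm\ell\prod_{p\mid a}p^{k_p}$ with the integers $k_p$ possibly distinct (and possibly negative), and multiplying $\gamma$ by $a^j$ shifts every $k_p$ by the same amount $nj$, so in general you cannot reach the literal shape $\pm\ell a^m$ unless the $k_p$ are already equal. You should know, however, that the paper's own proof does not close this either: it passes from ``$\ell a^{m'}=N_{E/\Q}(\beta\oo_E)$ for some $\beta\in W$, $m'\geq0$'' to ``$\ell=N_{E/\Q}(\beta\oo_{E,T}\cap\oo_E)$ for some $\beta\in W$'' and declares the two equivalent, and the return direction is exactly your difficulty. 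The equivalence is honest if one reads the representability condition at the level of ideals of $\zz[\frac1a]$ (i.e.\ ``$f(x)$ generates $\ell\zz[\frac1a]$ for some $x\in\zz^n$'', that is, $f(x)=\pm\ell u$ with $u\in\zz[\frac1a]^*$), or if $a$ is prime --- as it is in both of the paper's examples --- since then $\zz[\frac1a]^*=\pm a^{\zz}$ and a single twist by $a^j$ fixes the exponent. So the clean way to finish in the spirit of the paper is to prove the ideal-level equivalence (which your argument already does) and observe that it coincides with the stated form when $a$ is prime; for squarefree $a$ with several prime factors the rectification by norms of $T$-units that you describe is genuinely needed and is not supplied by the paper.
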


\begin{proof}
The norm form equation $\pm \ell a^m=f(x_1,\cdots,x_n)$ is solvable over $\zz$ for some $m\in\zz_{\geq 0}$ if and only if
 \begin{equation}\label{ideal-appl}
  \ell a^{m'}=N_{E/\Q}(\beta\oo_E)\quad \text{for some}~ \beta\in W, m'\in\zz_{\geq 0}.
 \end{equation}
  Let $T=\set{\text{prime ideals } \p | \p\mid a\oo_E}$, then the integral closure of $W$ in $E$ is $\oo_{E,T}$ by Corollary \ref{complete n.r.}. If (\ref{ideal-appl}) is true, then the prime ideal decomposition of $\beta\oo_E$ has the form $\p_\ell\prod_{\p\in T}\p^e$ with $N_{E/\Q}(\p_\ell)=\ell$. It is equivalent to that $\beta\oo_{E,T}\cap\oo_E=\p_\ell$. Hence (\ref{ideal-appl}) holds if and only if
   \begin{equation}\label{ideal-appl-2}
     \ell=N_{E/\Q}(\beta\oo_{E,T}\cap\oo_E)\quad\text{for some }\beta\in W.
   \end{equation}
  By the definition of the congruence subgroup  $P^{\f\m_0}_{E,W}$ determined by $W$ (c.f. Section $\S$\ref{Sec-congruence-subgroup}) and the number ring class field $H_W$, and note that $\ell\nmid a$, $\ell\nmid [\oo_E:\ord]$, then (\ref{ideal-appl-2}) is equivalent to that there exists some prime ideal $\p_\ell$ lying over $\ell$ such that $N_{E/\Q}(\p_\ell)=\ell$ and $\p_\ell$ splits completely in $H_W$.

  If $E/\Q$ is Galois and $W$ is integral closed in $E$ i.e. $W=\oo_{E,T}$, then the admissible modulus $\f\m_0=\prod_{\p\in T}\p$ for $H_W$ is Galois invariant and the congruence subgroup $P^{\f\m_0}_{E,W}=P^{\m_0}_{E,\oo_{E,T}}$ for $H_W$ is also Galois invariant since $\oo_{E,T}$ is Galois invariant. Hence $H_W$ is Galois over $\Q$. The proof is complete.
  \end{proof}

\section{Computation}\label{computation}
In this section, for a number ring $R$ we describe how to compute its Picard group, and then to compute its ring class field using the computer algebra system Pari/GP \cite{pari}.

Assume $R$ is given by an order $\ord$ and a $T$-unit $\epsilon$ in $\ord$. Since $\ord$ is a free $\zz$-module of rank $n$, so it can be represented by a $n\times n$ matrix with integer coefficients, where columns represent a basis of $\ord$ under a given set of integral basis of $K$. And $\epsilon$ is represented by a column under this basis.

Recall \cite{Cohen_rcg} that a finitely generated abelian group $\mathcal{G}$ is represented by a pair $(G,D_G)$ where $G$ ia s finite set of generators of $G$ given as a row vector, and $D_G$ a matrix in Smith Normal Form (SNF) representing relations of $G$, such that $GX=0$ if and only if $X$ is a $\zz$-linear combination of the columns of $D_G$ (when $\mathcal{G}$ is a multiplicative group and $G=(g_i)$ and $X=(x_i)$, then $GX$ is an abbreviation of $\prod g_i^{x_i}$).

Given a short exact sequence of finitely generated abelian groups
$$1 \ra \mathcal{A}\xrightarrow{\psi} \mathcal{B}\xrightarrow{\phi} \mathcal{C} \ra 1.$$
Assume $\psi$ and $\phi$ can be computed efficiently. It is often asked how to compute a group and solve its discrete logarithm problem (DLP) from the remaining two groups and their DLP algorithms. Both three cases have been discussed clearly in \cite{Cohen_rcg,gtm193}. These three routines will be used repeatedly in the following computation.

\subsection{Computing the Picard Group of $R$}\label{computation1}
For the number ring $R$, its integral closure is the $T$-integer ring $\oo_{K,T}$ for some $T$. Let $\f$ be the conductor of $R$. We may use the following exact sequence from \cite[Theorem 6.7]{arith_nr}
\begin{equation}\label{exact seq.}
  \oo_{K,T}^*\xrightarrow{\varphi} (\oo_{K,T}/\f)^*/(R/\f)^* \xrightarrow{\psi} \Pic(R)\xrightarrow{\phi} \Cl(\oo_{K,T})\ra 1
\end{equation}
to compute the Picard group of $R$.


\subsubsection{Computing Other Terms in the Exact Sequence.}
For the first term in (\ref{exact seq.}), Pari/GP has the function {\ttfamily bnfsunit} to compute the group of $T$-units given any number field $K$ and finite set $T$ of prime ideals of $K$ as input. For the second term in (\ref{exact seq.}), note that $\oo_{K,T}$ is no longer a finitely generated $\zz$-module. We need the following

\begin{lemma}\label{kernelphi}
Let $R, \oo_{K,T}$ and $\f$ be as above. By Corollary \ref{fin.gen.ZTmod} we have $R=\zz[\frac{1}{a}]\ord$ where $\ord$ is the order $R\cap\ok$. Denote by $\f_\ord$ the conductor of $\ord$. Then we have $\oo_{K,T}/\f = \ok/\f_\ord$ and $R/\f= \ord/\f_\ord$.
\end{lemma}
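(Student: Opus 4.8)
The plan is to realize both rings in the statement as localizations and to use the conductor compatibility already proved. By Corollary \ref{fin.gen.ZTmod} we have $\oo_{K,T}=\ok[\tfrac1a]=S^{-1}\ok$ and $R=\ord[\tfrac1a]=S^{-1}\ord$, where $S=\{1,a,a^2,\dots\}$, and by Proposition \ref{local.ord.} the conductors are related by $\f_\ord=\f\cap\ok$ and $\f=\f_\ord\oo_{K,T}=S^{-1}\f_\ord$. Moreover $\f_\ord=\f\cap\ok$ is prime to $a\ok$: this is exactly the statement (used already in Section \ref{theory}) that $\tilde\f$ is prime to $\m_0=\prod_{\p\in T}\p$, which follows from Proposition \ref{basic prop. of number ring} (4). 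Hence $\f_\ord+a\ok=\ok$, so the class of $a$ is a unit in the finite ring $\ok/\f_\ord$; the point of the lemma is that localizing at $S$ therefore changes nothing modulo the conductor.

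The one step that needs a little care is to produce a witness to that invertibility \emph{inside the order}. Write $1=f+ac_0$ with $f\in\f_\ord$ and $c_0\in\ok$. Then $ac_0=1-f\in\ord$ since $1\in\ord$ and $\f_\ord\subseteq\ord$, so $c_0=(ac_0)/a\in\ord[\tfrac1a]=R$; combined with $c_0\in\ok$ this gives $c_0\in R\cap\ok=\ord$. Setting $c:=c_0\in\ord$ we get $a^kc^k\equiv 1\pmod{\f_\ord}$, hence also $a^kc^k\equiv1\pmod{\f}$ for every $k\ge0$ because $\f_\ord\subseteq\f$.

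With $c$ in hand I would finish by checking that the inclusion-induced ring homomorphisms $\ok/\f_\ord\to\oo_{K,T}/\f$ and $\ord/\f_\ord\to R/\f$ are bijective. Injectivity of the first is $\f\cap\ok=\f_\ord$ (Proposition \ref{local.ord.}); for the second, if $x\in\f\cap\ord$ then $x\ok\subseteq x\oo_{K,T}\subseteq R$ and $x\ok\subseteq\ok$, so $x\ok\subseteq R\cap\ok=\ord$, i.e.\ $x\in\f_\ord$. For surjectivity, any element of $\oo_{K,T}$ (resp.\ of $R$) has the form $x/a^k$ with $x\in\ok$ (resp.\ $x\in\ord$), and since $\f$ is an ideal of $\oo_{K,T}$ (resp.\ of $R$) and $a^kc^k\equiv1\pmod\f$, one gets $x/a^k\equiv xc^k\pmod\f$ with $xc^k\in\ok$ (resp.\ $xc^k\in\ord$). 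This yields the two asserted identifications $\oo_{K,T}/\f=\ok/\f_\ord$ and $R/\f=\ord/\f_\ord$.

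The only genuine obstacle is the bookkeeping in the middle paragraph: ensuring the inverse $c$ of $a$ modulo $\f_\ord$ lies in $\ord$ and not merely in $\ok$. This is precisely the $a$-saturation of $\ord$ in $\ok$, which is built into the identity $\ord=R\cap\ok$ with $R=\ord[\tfrac1a]$; everything else is a routine diagram chase once Proposition \ref{local.ord.} and the coprimality of $\f_\ord$ with $a$ have been invoked.
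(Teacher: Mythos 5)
Your proof is correct and follows essentially the same route as the paper's: both realize $\oo_{K,T}$ and $R$ as localizations (the paper at a $T$-unit $\epsilon\in\ord$, you at the rational integer $a$) and observe that inverting an element that is already invertible modulo the conductor leaves the quotient ring unchanged. If anything, your middle paragraph --- checking that the inverse of $a$ modulo $\f_\ord$ can be taken inside $\ord$ via $c_0=(1-f)/a\in\ord[\tfrac1a]\cap\ok=\ord$ --- makes explicit the one point the paper dismisses with ``following the same argument,'' and it is precisely where the saturation $\ord=R\cap\ok$ is genuinely used.
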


\begin{proof}
By Proposition \ref{local.ord.} $\f_\ord=\f\cap\ok$, it follows that $\ok/\f_\ord\subseteq\oo_{K,T}/\f$. And there is some $T$-unit $\epsilon\in\ord$ such that $\oo_{K,T}=\ok[\frac{1}{\epsilon}]$ and $\f=\f_\ord[\frac{1}{\epsilon}]$. Then $\oo_{K,T}/\f=\ok/\f_\ord[\frac{1}{\bar\epsilon}]$ where we write $\bar\epsilon$ for the image of $\epsilon$ in $\ok/\f_\ord$. Since $\epsilon$ is a $T$-unit so $\bar\epsilon\in(\ok/\f_\ord)^*$, and hence $\ok/\f_\ord[\frac{1}{\bar\epsilon}]=\ok/\f_\ord$. Following the same argument we can prove $R/\f= \ord/\f_\ord$.
\end{proof}

By this Lemma we have $(\oo_{K,T}/\f)^*/(R/\f)^*=(\ok/\f_\ord)^*/(\ord/\f_\ord)^*$. In Pari/GP the function {\ttfamily idealstar} can be used to compute the multiplicative group $(\ok/\f_\ord)^*$ and {\ttfamily ideallog} to compute the discrete logarithm of an element in it. Complete algorithm is described in \cite{Cohen_rcg}. And \cite{residuering} made an improvement on it . As for the computation of $(\ord/\f_\ord)^*$, the computation is analogous to that of $(\ok/\f_\ord)^*$, except that we should compute the conductor $\f_\ord$ first. The whole algorithm has been described in \cite{picard_ord}. Note that for solving the DLP in the quotient of $(\ok/\f_\ord)^*$ modulo $(\ord/\f_\ord)^*$, we only need the algorithm to solve the DLP in the former group.

 Since $\varphi(\alpha)=\alpha\oo_{K,T}$ (or $\alpha\ok$) can be computed efficiently we can compute the cokernel of $\varphi$, that is the quotient group of $(\oo_{K,T}/\f)^*/(R/\f)^*$ modulo $\varphi(\oo_{K,T}^*)$, which is also equal to the kernel of $\phi$. Solving the DLP in the kernel of $\phi$ can be reduced to solving the DLP in $(\oo_{K,T}/\f)^*/(R/\f)^*$.

As for the fourth term in (\ref{exact seq.}), it can be computed by the following exact sequence from \cite{arith_nr}
$$ \bigoplus_{\p\in T}\zz \ra \Cl_K\ra\Cl(\oo_{K,T})\ra 1.$$
Computing the class group of $K$ and solving its DLP have been well-considered in many references and in Pari/GP \cite{gtm138}, {\ttfamily bnfinit} and {\ttfamily bnfisprincipal} respectively. The function {\ttfamily bnfsunit} can be used to compute the $T$-class group $\Cl(\oo_{K,T})$, but we fail to find the function to solve the DLP in it. Furthermore, for our final goal we need something more through computing the DLP in $\Cl(\oo_{K,T})$. Explicitly, suppose that $\Cl(\oo_{K,T})=\bigoplus_{i=1}^m \zz/d_i\zz\cdot[\pp_i]$\footnote{The bracket $[x]$ in a quotient group means a class represented by $x$. The meaning of ``class'' differ in different groups, but when there is no fear of confusion, we would use this symbol uniformly.}. For an ideal $\mathfrak{A}$ of $\oo_{K,T}$, we should compute not only the $(v_1,\cdots,v_m)$ such that $[\mathfrak{A}]=\prod [\pp_i]^{v_i}$, but also the $\gamma\in K$ such that $\mathfrak{A}=\gamma\oo_{K,T}\cdot\prod \pp_i^{v_i}$. The algorithm can be described as following:
\\
\\
\textbf{Algorithm 1.}

\begin{tabular}{ll}
  Input: & Class group $\Cl_K=(B,D_B)$ where $B=\{[\mathfrak{b}_1],\cdots,[\mathfrak{b}_l]\}$ and $D_B=\mbox{diag}(e_1,\cdots,e_l)$, \\
   & finite set of prime ideals $T=\{\p_1,\cdots,\p_t\}$, an ideal $I$ of $\ok$. \\
  Output: & $T$-class group $\Cl(\oo_{K,T})=(C,D_C)$ where $C=\{[\mathfrak{c}_1\oo_{K,T}],\cdots,[\mathfrak{c}_m\oo_{K,T}]\}$ and $\mathfrak{c}_i$ are\\
   & $\ok$-ideals and $D_C=\mbox{diag}(d_1,\cdots,d_m)$, $v=(v_1,\cdots,v_c)^t$ and $\gamma$ such that $I\oo_{K,T}=$\\
   & $\gamma\cdot \prod_{i=1}^m(\mathfrak{c}_i\oo_{K,T})^{v_i}$.
\end{tabular}
%
\begin{itemize}
\item[1.] Compute the discrete logarithm of every $[\p_i]$ in $\Cl_K$ as $[\p_i]=BX_i$. Set $X=[X_1,\cdots,X_t]$.
\item[2.] Set $M=[D_B|X]$. Compute the SNF of $M$: $UMV=[0|D'_C]$, and $C'=BU^{-1}=\{[\mathfrak{c}'_1],\cdots,[\mathfrak{c}'_l]\}$. Let $m$ be the largest index of $i$ s.t. the $(i,i)$-th component of $D'_C\neq 1$. Set $\mathfrak{c}_i=\mathfrak{c}'_i$, $C=\{[\mathfrak{c}_1\oo_{K,T}],\cdots,[\mathfrak{c}_m\oo_{K,T}]\}$ and $D_C$ to be the left upper $m\times m$ submatrix of $D_C'$.
\item[3.] Compute the discrete logarithm of $I$ in $\Cl_K$: $w=(w_1,\cdots,w_l)^t$ and $\alpha\in K$ such that $I=\alpha\prod_{i=1}^l\mathfrak{b}_i^{w_i}$. Compute $v'=(v'_1,\cdots,v'_l)^t=Uw$. For $1\leq i\leq l$, compute $v'_i=n_iD'_C[i,i]\footnote{For a matrix $M$, $M[i,j]$ denotes the $(i,j)$-th component of $M$.}+v_i$ with $0\leq v_i<v'_i$. Set $v=(v_1,\cdots,v_m)^t$.
\item[4.] Compute $\{\alpha_i\}_{i=1}^l$ such that $\mathfrak{b}_i^{e_i}=\alpha_i\ok$. For $1\leq i\leq l$, compute $\beta_i=\prod_{k=1}^l \alpha_k^{V[k,t+i]}$. Compute $\beta=\prod_{i=1}^l\beta_i^{n_i}$.
  Set $\gamma=\alpha\beta$.
\item[5.] Return $(C,D_C,v,\gamma)$.
\end{itemize}

\begin{remark}
Note that we use $\ok$-ideals $\{\mathfrak{c}_i\}$ to represent or store the generators of $\Cl(\oo_{K,T})$.
\end{remark}

\subsubsection{Computing $\Pic(R)$ and Solving the DLP in It.}
Now we come to the short exact sequence
\begin{equation}\label{exact seq.2}
  1\ra \frac{(\oo_{K,T}/\f)^*/(R/\f)^*}{\mbox{Im}(\varphi)}\xrightarrow{\psi} \Pic(R)\xrightarrow{\phi}\Cl(\oo_{K,T})\ra 1,
\end{equation}
where the first group and the last group have been computed, denoted by $\mathcal{A}=(\{[\alpha_i]\},D_A)$ and $\mathcal{C}=(\{[\mathfrak{c}_i\oo_{K,T}]\}_{i=1}^m,D_C)$ respectively. For computing $\Pic(R)$ first we consider how to represent an element of $\Pic(R)$.

  From Theorem \ref{main}, $\Pic(R)\cong\Pic(R,\f)$. Instead of dealing with an arbitrary ideal $\mathcal{I}$ of $R$, indeed we could choose another representative $\mathcal{I}'$ in the class of $\mathcal{I}$ which is also prime to $\f$. Note that $J(R,\f)\twoheadrightarrow \Pic(R,\f)$ is an epimorphism, and by the isomorphism $\iota: J(R,\f)\simeq J_K^{\tilde{\f}\m_0}$ from Corollary \ref{J(R,f)}, any element of $\Pic(R,\f)$ could be chosen as a class of an ideal of $R$ which is prime to $\f$, i.e. belongs to $J(R,\f)$. Then by $\iota$ this $R$-ideal can be replaced by an $\ok$-ideal which is prime to $\tilde{\f}\m_0$, e.g. $[\iota^{-1}(\mathfrak{a})]$ where $\mathfrak{a}$ is an ideal of $\ok$ prime to $\tilde{\f}\m_0$.



\begin{lemma}
In the sense of equalities in Lemma \ref{kernelphi}, all the elements $\{\alpha_i\}$ in $\mathcal{A}$ could be chosen from the group $(\ok/\f_\ord)^*$. Then $\psi([\alpha_i])=[\alpha_i\oo_{K,T}\cap R]$ where $\alpha_i\oo_{K,T}\cap R$ is automatically in $J(R,\f)$. If $\alpha_i\ok=\prod_{\p\in T}\p^{n_{\p}}\mathfrak{a}_i$ where $\mathfrak{a}_i$ is prime to $T$, then $\iota(\alpha_i\oo_{K,T}\cap R)=\mathfrak{a}_i$.
\end{lemma}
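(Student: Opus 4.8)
The plan is to unwind each of the three assertions using the identifications already set up. The first one is immediate: by Lemma~\ref{kernelphi} we have $\oo_{K,T}/\f=\ok/\f_\ord$, hence also $(\oo_{K,T}/\f)^*=(\ok/\f_\ord)^*$, so a generating family for $\mathcal{A}$ can be represented by classes $[\alpha_i]$ with each $\alpha_i\in(\ok/\f_\ord)^*$; lifting once more we take $\alpha_i\in\ok$ with $\alpha_i\ok+\f_\ord=\ok$. For the formula for $\psi([\alpha_i])$, I would recall the explicit shape of the connecting homomorphism $\psi$ in the exact sequence (\ref{exact seq.}) from \cite[Theorem~6.7]{arith_nr}: a class in $(\oo_{K,T}/\f)^*/(R/\f)^*$ is lifted to some $\alpha\in\oo_{K,T}$ with $\alpha\oo_{K,T}+\f=\oo_{K,T}$, and is sent to the class of the invertible $R$-ideal $\alpha\oo_{K,T}\cap R$. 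By Proposition~\ref{local.ord.} we have $\f=\f_\ord\oo_{K,T}$, so the relation $\alpha_i\ok+\f_\ord=\ok$ extends to $\alpha_i\oo_{K,T}+\f=\oo_{K,T}$; thus our lifts $\alpha_i\in\ok$ are admissible lifts for $\psi$ and $\psi([\alpha_i])=[\alpha_i\oo_{K,T}\cap R]$.

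Second, I would check $\alpha_i\oo_{K,T}\cap R\in J(R,\f)$. The ideal $\alpha_i\oo_{K,T}$ is an invertible $\oo_{K,T}$-ideal which is prime to $\f$ by the displayed relation, i.e.\ it lies in $J(\so,\f)$ with $\so=\oo_{K,T}$. Under the isomorphism $J(\so,\f)\cong J(R,\f)$ of Corollary~\ref{J(R,f)}, whose inverse is contraction of ideals $\mathfrak A'\mapsto\mathfrak A'\cap R$, it corresponds precisely to $\alpha_i\oo_{K,T}\cap R$; hence this ideal lies in $J(R,\f)$ automatically, as claimed.

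Third, I would compute $\iota(\alpha_i\oo_{K,T}\cap R)$, where $\iota\colon J(R,\f)\xrightarrow{\sim}J_K^{\tilde\f\m_0}$ is the composite isomorphism of Corollary~\ref{J(R,f)}, explicitly ``extend to $\so=\oo_{K,T}$, then contract to $\ok$''. Extending $\alpha_i\oo_{K,T}\cap R$ back to $\oo_{K,T}$ returns $\alpha_i\oo_{K,T}$. Writing $\alpha_i\ok=\prod_{\p\in T}\p^{n_\p}\mathfrak a_i$ with $\mathfrak a_i$ prime to $T$, and using $\p\oo_{K,T}=\oo_{K,T}$ for every $\p\in T$ (Proposition~\ref{basic prop. of number ring}(4)), extension to $\oo_{K,T}$ kills the $T$-part, so $\alpha_i\oo_{K,T}=(\alpha_i\ok)\oo_{K,T}=\mathfrak a_i\oo_{K,T}$. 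Contracting this back to $\ok$ returns $\mathfrak a_i$, again by Proposition~\ref{basic prop. of number ring}(4): the operations ``$\cdot\,\oo_{K,T}$'' and ``$\cap\,\ok$'' are mutually inverse bijections between $\ok$-ideals prime to $T$ and $\oo_{K,T}$-ideals, and $\mathfrak a_i$ is prime to $T$. Hence $\iota(\alpha_i\oo_{K,T}\cap R)=\mathfrak a_i$. The only delicate point is to pin down, from \cite{arith_nr}, the precise description of the map $\psi$; granting that explicit formula, the rest is routine bookkeeping with the isomorphisms of Corollary~\ref{J(R,f)} and the prime-ideal dictionary of Proposition~\ref{basic prop. of number ring}(4).
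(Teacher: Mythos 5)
Your proposal is correct and follows essentially the same route as the paper: identify $(\oo_{K,T}/\f)^*$ with $(\ok/\f_\ord)^*$ via Lemma \ref{kernelphi}, use the explicit description of $\psi$ as $\alpha\mapsto[\alpha\oo_{K,T}\cap R]$, extend the coprimality to $\oo_{K,T}$ to land in $J(\so,\f)$ and hence in $J(R,\f)$ via Corollary \ref{J(R,f)}, and kill the $T$-part with $\p\oo_{K,T}=\oo_{K,T}$ to get $\iota(\alpha_i\oo_{K,T}\cap R)=\mathfrak a_i$. No substantive differences.
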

\begin{proof}
Note that $\f_\ord=\f\cap\ord=\tilde\f$. For any $\alpha$ in $(\oo_{K,T}/\f)^*$, its image of $\psi$ is the class of $\alpha\oo_{K,T}\cap R$. If $\alpha \in \ok$ prime to $\tilde{\f}$, i.e. $\alpha\ok+\tilde{\f}=\ok$, extending to $\oo_{K,T}$ then $\alpha\oo_{K,T}+\f=\oo_{K,T}$, which means that $\alpha\oo_{K,T}\in J(\oo_{K,T},\f)$. By Corollary \ref{J(R,f)} then $\alpha\oo_{K,T}\cap R\in J(R,\f)$. Assuming $\alpha\ok=\prod_{\p\in T}\p^{n_{\p}}\mathfrak{a}$ where $\mathfrak{a}$ is prime to $T$, then $\alpha\oo_{K,T}\cap R=\mathfrak{a}\oo_{K,T}\cap R$. Since $\alpha$ is prime to $\tilde{\f}$ so $\mathfrak{a}\in J_K^{\tilde{\f}\m_0}$. Hence $\iota^{-1}(\mathfrak{a})=\mathfrak{a}\oo_{K,T}\cap R$ which implies $\iota(\alpha\oo_{K,T}\cap R)=\mathfrak{a}$.
\end{proof}
This lemma gives us an easy way to choose a representative in $\psi([\alpha_i])$ that is an integral ideal of $\ok$ and prime to $\tilde{\f}\m_0$.
For finding such a representative in $\phi^{-1}([\mathfrak{c}_i\oo_{K,T}])$ for $\mathfrak{c}_i$ in $\mathcal{C}$, we only need to change the representative of $[\mathfrak{c}_i\oo_{K,T} ]$ by the following  modified ``weak approximation theorem'':

\begin{theorem}\cite[Proposition 1.3.8]{gtm193}\label{weak approx.}
Let $S$ be a finite set of prime ideals of $\ok$ and  $(e_\p)_{\p\in S}\in \zz^S$. There exists a polynomial-time algorithm that finds $a\in K$ such that $\emph{ord}_\p(a)=e_\p$ for all $\p\in S$ and $\emph{ord}_\p(a)\geq 0$ for all $\p\notin S$.
\end{theorem}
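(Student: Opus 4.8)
The plan is to deduce the statement from an effective form of the classical fact that, in a Dedekind domain, every ideal class contains an integral ideal coprime to a prescribed ideal --- but carried out on \emph{elements} rather than ideals, so as to stay polynomial time (in particular, with no computation of the class group of $\ok$, which is only subexponential). Write $S^{+}=\set{\p\in S|e_\p>0}$ and $S^{-}=\set{\p\in S|e_\p<0}$, and set
$$\mathfrak{a}=\prod_{\p\in S^{+}}\p^{e_\p},\qquad \mathfrak{b}=\prod_{\p\in S^{-}}\p^{-e_\p},\qquad \mathfrak{q}=\prod_{\p\in S}\p,$$
nonzero integral ideals of $\ok$ with $\mathfrak{a},\mathfrak{b}$ coprime and $\prod_{\p\in S}\p^{e_\p}=\mathfrak{a}\mathfrak{b}^{-1}$. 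The returned element will be a quotient $a=\gamma/\beta$, with $\beta$ built from $\mathfrak{b}$ and $\gamma$ from $\mathfrak{a}$.

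The key subroutine I would isolate is the following: \emph{given nonzero integral ideals $\mathfrak{d},\mathfrak{n}$ of $\ok$, produce in polynomial time a $\delta\in\mathfrak{d}$ with $\delta\mathfrak{d}^{-1}+\mathfrak{n}=\ok$}, equivalently with $\mathrm{ord}_\p(\delta)=\mathrm{ord}_\p(\mathfrak{d})$ for every $\p\mid\mathfrak{n}$. Enumerate the primes $\p_1,\dots,\p_k$ dividing $\mathfrak{n}$; for each $i$ pick a $\zz$-basis element $\pi_i$ of $\mathfrak{d}$ lying outside $\mathfrak{d}\p_i$ (one exists, else $\mathfrak{d}\subseteq\mathfrak{d}\p_i$); since the submodules $\mathfrak{d}\p_1,\dots,\mathfrak{d}\p_k$ of $\mathfrak{d}$ are pairwise comaximal (i.e. $\mathfrak{d}\p_i+\mathfrak{d}\p_j=\mathfrak{d}$ for $i\ne j$) with $\bigcap_i\mathfrak{d}\p_i=\mathfrak{d}\prod_i\p_i$, the Chinese Remainder Theorem in the finite module $\mathfrak{d}/\mathfrak{d}\prod_i\p_i\cong\bigoplus_i\mathfrak{d}/\mathfrak{d}\p_i$ produces $\delta\in\mathfrak{d}$ with $\delta\equiv\pi_i\pmod{\mathfrak{d}\p_i}$ for all $i$; as $\pi_i\notin\mathfrak{d}\p_i$ this forces $\mathrm{ord}_{\p_i}(\delta)=\mathrm{ord}_{\p_i}(\mathfrak{d})$ exactly. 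Each step here --- ideal arithmetic and membership tests in a fixed integral basis, CRT in a residue ring --- is polynomial time.

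With the subroutine in hand, I would first apply it to $(\mathfrak{b},\mathfrak{q})$, obtaining $\beta\in\mathfrak{b}$ such that $\mathfrak{c}:=\beta\mathfrak{b}^{-1}$ is integral and coprime to $\mathfrak{q}$; then $\mathrm{ord}_\p(\beta)=-e_\p$ on $S^{-}$, $\mathrm{ord}_\p(\beta)=0$ on $S\setminus S^{-}$, and $\mathrm{ord}_\p(\beta)=\mathrm{ord}_\p(\mathfrak{c})\geq 0$ off $S$. Then apply it to $(\mathfrak{a}\mathfrak{c},\mathfrak{q})$, obtaining $\gamma\in\mathfrak{a}\mathfrak{c}$ with $\gamma(\mathfrak{a}\mathfrak{c})^{-1}$ coprime to $\mathfrak{q}$; since $\mathfrak{c}$ is coprime to $\mathfrak{q}$ this gives $\mathrm{ord}_\p(\gamma)=\max(e_\p,0)$ on $S$, while off $S$, $\mathfrak{a}$ being supported on $S$, $\mathrm{ord}_\p(\gamma)\geq\mathrm{ord}_\p(\mathfrak{c})=\mathrm{ord}_\p(\beta)$. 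Setting $a=\gamma/\beta$ then yields $\mathrm{ord}_\p(a)=\max(e_\p,0)-\max(-e_\p,0)=e_\p$ for $\p\in S$ and $\mathrm{ord}_\p(a)\geq 0$ for $\p\notin S$, which is what is wanted; the whole procedure uses a polynomial number of field operations in the input size.

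I expect the only genuine obstacle to be the integrality requirement off $S$. A single direct call of the subroutine (using $\mathfrak{a}$ and $\mathfrak{b}$ alone) already pins down the valuations on $S$, but leaves an uncontrolled denominator at whatever primes outside $S$ happen to divide $\beta$; the remedy is the auxiliary ideal $\mathfrak{c}=\beta\mathfrak{b}^{-1}$, which records precisely those primes, so that forcing $\gamma$ into $\mathfrak{a}\mathfrak{c}$ makes them cancel in $\gamma/\beta$. The second point to watch is the polynomiality claim: the temptation is to argue through the class group (pick an integral ideal coprime to $\mathfrak{q}$ in the class $[\mathfrak{a}\mathfrak{b}^{-1}]^{-1}$, then a generator of its product with $\mathfrak{a}\mathfrak{b}^{-1}$), but class-group computation is not polynomial time, whereas the element-level CRT argument above is.
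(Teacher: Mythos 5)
The paper offers no proof of this statement at all: it is imported verbatim from Cohen's book (the cited Proposition 1.3.8), so there is nothing in the paper itself to compare your argument against. Your proof is correct and is, in substance, the standard algorithm underlying Cohen's result. The two load-bearing points both check out: for $\delta\in\mathfrak{d}$ one has $\delta\notin\mathfrak{d}\p_i$ if and only if $\mathrm{ord}_{\p_i}(\delta)=\mathrm{ord}_{\p_i}(\mathfrak{d})$, so the CRT lift of residues $\pi_i\notin\mathfrak{d}\p_i$ in $\mathfrak{d}/\mathfrak{d}\prod_i\p_i\cong\bigoplus_i\mathfrak{d}/\mathfrak{d}\p_i$ really does produce $\delta$ with $\delta\mathfrak{d}^{-1}$ integral and coprime to $\mathfrak{n}$; and the second call on $\mathfrak{a}\mathfrak{c}$ with $\mathfrak{c}=\beta\mathfrak{b}^{-1}$ exactly cancels the uncontrolled primes of $\beta$ outside $S$, so the valuation bookkeeping $\mathrm{ord}_\p(\gamma/\beta)=\max(e_\p,0)-\max(-e_\p,0)=e_\p$ on $S$ and $\mathrm{ord}_\p(\gamma/\beta)\geq 0$ off $S$ is right (the degenerate cases $S^{+}=\emptyset$ or $S^{-}=\emptyset$ cause no trouble). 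Your closing remark is also the correct instinct: the construction is purely local and never touches $\Cl_K$, which is what keeps it polynomial. The one caveat I would record concerns the complexity model rather than the mathematics: any $a$ with $\mathrm{ord}_\p(a)=e_\p$ has representation size at least linear in $\max_\p|e_\p|$, so ``polynomial time'' must be read as polynomial in $\max_\p|e_\p|$, the data of $S$ and the field, not in the bit lengths $\log|e_\p|$; your algorithm (like Cohen's) is polynomial in that sense, which is all that the applications in Section 6 require, the exponents there being valuations of a fixed conductor.
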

\noindent Thus we can replace $\mathfrak{c}_i$ by an integral $\ok$-ideal $\mathfrak{c}'_i$ prime to $\tilde\f\m_0$ in the same ideal class of $\mathfrak{c}_i$.
Then $\phi^{-1}([\mathfrak{c}'_i\oo_{K,T}])=[\mathfrak{c}'_i\oo_{K,T}\cap R]=[\iota^{-1}(\mathfrak{c}'_i)]$, and hence $\mathfrak{c}'_i$ is just the required one.

Now $\Pic(R)$ can be computed efficiently. For solving the DLP in $\Pic(R)$, it is a routine as explained at the beginning of this section. But there is an issue should be dealt with: how to recover the $\xi$ if given $\psi(\xi)$?
Assume the input is an ideal $\mathfrak{a}$ of $\ok$ which is prime to $\tilde{\f}\m_0$.
After computing the discrete logarithm of $\phi([\iota^{-1}(\mathfrak{a})])=\mathfrak{a}\oo_{K,T}$ by Algorithm 1, we get a $\gamma\in K$ and $v=(v_1,\cdots,v_c)^t$ such that $\mathfrak{a}\oo_{K,T}=\gamma\cdot \prod_{i=1}^m({\mathfrak{c}'}_i\oo_{K,T})^{v_i}$. If let $\mathfrak{d}=\mathfrak{a}\prod_{i=1}^m {\mathfrak{c}'}_i^{-v_i}$, then $\phi([\iota^{-1}(\mathfrak{d})])=[\gamma\oo_{K,T}]=[1]$. Hence $[\iota^{-1}(\mathfrak{d})]\in \mbox{Im}(\psi)$, and there is some $\xi\in (\oo_{K,T}/\f)^*$ such that $[\iota^{-1}(\mathfrak{d})]=[\xi\oo_{K,T}\cap R]$ in $\Pic(R)$ (also $[\gamma\oo_{K,T}]=[\xi\oo_{K,T}]$ in $\Cl(\oo_{K,T})$).
 Write $\gamma=\alpha/\beta$ where $\alpha,\beta\in \ok$. By Theorem \ref{weak approx.} we can find an $\eta\in K$ such that $\alpha\eta\in \ok$ and prime to $\tilde{\f}$ and $\beta\eta\in\ok$. Denoting $\alpha\eta$, $\beta\eta$ by $\alpha'$, $\beta'$ respectively, then $\gamma=\alpha'/\beta'$. Note that $\gamma\oo_{K,T}$ is prime to $\f$. Hence $\gamma\ok$ is prime to $\tilde{\f}$ and so is $\beta'\ok$. Then $\beta'\in (\ok/\tilde{\f})^*=(\oo_{K,T}/\f)^*$ and has inverse $\beta^{-1} \in (\oo_{K,T}/\f)^*$. Then $\xi=\alpha'\beta^{-1}$ is just what we need.

\subsection{Computing the Ring Class Field of $R$}\label{computation2}
Recall that $\tilde\f\m_0$ is an admissible modulus for the ring class field $H_R$ defined by $R$. And from Section $\S$\ref{theory} we have
\begin{equation}\label{final exact seq.}
  1\ra P^{\tilde{\f}\m_0}_{K,R}/P^{\tilde{\f}\m_0}_{K,1}\ra \Cl^{\tilde{\f}\m_0}_K \ra \Pic(R,\f)\ra 1.
\end{equation}
Pari/GP has function {\ttfamily bnrinit} to compute the ray class group $\Cl^{\tilde{\f}\m_0}_K$. Also we know how to compute the Picard group of $R$ and solve the DLP in it by previous argument. Then we can compute the congruence subgroup $ P^{\tilde{\f}\m_0}_{K,R}$ (modulo $P^{\tilde{\f}\m_0}_{K,1}$) determined by $\Pic(R,\f)$, which can be represented by a matrix $M_R$, c.f. \cite{Cohen_rcg}.

Finally, given a congruence subgroup $M_R$ for modulus $\tilde{\f}\m_0$, we should compute the corresponding class field $H_R$. It is a rather complicated and technical but interesting task. Pari/GP has function {\ttfamily rnfkummer} to compute it, but only in the case that the class field is a Kummer extension of prime degree over $K$. It uses a method based on ``Hecke's theorem'' \cite[Theorem 10.2.9]{gtm193}. For the general case, it is more avaliable to use a method based on ``Artin map'', which is proposed by Claus Ficker in 2000 \cite{Fieker}. For more knowledge one may refer to \cite{Cohen_ccft}.\\
\\
\emph{Example}~1. Let $K$ be an imaginary quadratic extension defined by $x^2+710=0$. Then $\text{d}(K)= -2840=-2^3\cdot5\cdot71$. Let $\alpha=\sqrt{-710}$ be a root of this definition equation. Then $\{1,\alpha\}$ is a set of integral basis of $K$. The class group $\Cl_K$ is equal to
$$\Cl_K=\zz/16\zz\cdot[\p_7] \oplus \zz/2\zz\cdot[\p_5] $$
where $7\ok=\p_7\tilde{\p}_7$ (split), $5\ok=\p_5^2$ (ramified). Let $a=7$, $T=\{\p_7,\tilde{\p}_7\}$, $R=\oo_{K,T}$. We get
$$\Pic(R)=\zz/2\zz\cdot[\pp_5]$$
where $\pp_5=\p_5\oo_{K,T}$, and the ring class field $H_R$ corresponding to $R$ has the form $K[x]/(x^2 - 2)$, which is equal to $\Q[x]/(x^4 + 354x^2 + 31684)$.

In this case, $R=\oo_{K,T}=\zz[\frac{1}{a}]\ok$. The norm form equation w.r.t. $R$ is $\textbf{\mbox{N}}(X+Y\alpha)=X^2 + 710Y^2$. The primes that are not belonging to $\set{2,5,7,19}$, smaller than $100$ and split in $H_R$ are $17,23,47,79,97$. We can find a solution $(X,Y)\in\zz$ such that $p\zz[\frac{1}{7}]^*=X^2 + 710Y^2$ for every $p\in\set{17,23,47,79,97}$:
\begin{eqnarray*}
  17\cdot 7^5 &=& 447^2+710\cdot11^2, \\
  23\cdot 7^7 &=& 1593^2+710\cdot152^2, \\
  47\cdot 7^3 &=& 69^2+710\cdot4^2, \\
  79\cdot 7^4 &=& 173^2+710\cdot15^2, \\
  97\cdot 7^7 &=& 1991^2+710\cdot327^2.
\end{eqnarray*}
It is obvious that for $p\in\set{17,23,47,79,97}$, $p=X^2+710Y^2$ has no integral solution . Therefore, for those $p$ splitting in $H_R$, it may happen that $p=X^2+710Y^2$ has no integral solution but $p\cdot 7^m=X^2+710Y^2$ must have an integral solution for some non-negative integer $m$.
\\
\\
\emph{Example}~2.  Here $K$ is a quartic extension over $\Q$ defined by $f= x^4 + 15x^2 + 45$. We get $\text{d}(K)=18000=2^4\cdot3^2\cdot5^3$. Let $\alpha$ be a root of $f$. Then $\set{1,\alpha,\beta:=1/3\alpha^2 + 2,\gamma:=1/3\alpha^3 + 3\alpha}$ is a set of integral basis of $K$. The class group $\Cl_K$ is equal to
$$\Cl_K=\zz/2\zz\cdot[\p_{11,1}]\oplus\zz/2\zz\cdot[\p_{2}]$$
where $11\ok=\prod_{i=1}^4\p_{11,i}$ and $2\ok=\p_{2}^2$. Let $a=11$ and $\ord=\zz[\alpha]$ and $R=\zz[\frac{1}{a}]\ord$. Here $[\ok:\ord]=9$. After computation we get
$$\Pic(R)=\zz/2\zz\cdot[\pp_{2}]$$
where $\pp_{2}=\p_{2}\oo_{K,T}\cap R$. Let $H_R$ be the ring class field defined by $R$. Its definition equation over $\Q$ is $x^8 + 40x^6 + 470x^4 + 1400x^2 + 25$.

The norm form equation w.r.t. $R$ is $\textbf{\mbox{N}}(X+Y\alpha+Z\beta+W\gamma)=$
 \begin{equation*}
   \begin{split}
     & X^4  - 30ZX^3 + (15Y^2 - 270WY + (315Z^2 + 1350W^2))X^2 + (-180ZY^2 + 2700WZY)X \\
       &  + (-1350Z^3 - 12150W^2Z)X + (45Y^4 - 1350WY^3 + (675Z^2 + 14175W^2)Y^2  \\
       & + (-8100WZ^2 - 60750W^3)Y + (2025Z^4 + 30375W^2Z^2 + 91125W^4))
   \end{split}
 \end{equation*}
 The primes that are not belonging to $\set{2,3,5,11}$, smaller than $150$ and split in $H_R$ are $61,71,131$. We can find a solution $(X,Y,Z,W)\in\zz$ such that $p\zz[\frac{1}{11}]^*=\textbf{\mbox{N}}(X+Y\alpha+Z\beta+W\gamma)$ for every $p\in\set{61,71,131}$:
 \begin{eqnarray*}
   61 &=& \textbf{\mbox{N}}(1+\alpha), \\
   71\cdot11 &=& \textbf{\mbox{N}}(1+2\alpha), \\
   131\cdot11^3 &=& \textbf{\mbox{N}}(2+8\alpha+3\beta+\gamma).
 \end{eqnarray*}

\end{document}